\newtheorem{thm}{Theorem}
\newtheorem{lemma}[thm]{Lemma}
\newtheorem{prop}[thm]{Proposition}
\newtheorem{definition}[thm]{Definition}
\newtheorem{remark}[thm]{Remark}
\newtheorem{corollary}[thm]{Corollary}
\newcommand{\Prob}[1]{\mathbb{P}\left( #1 \right)} 
\newcommand{\ProbI}[2]{\mathbb{P}^{#1}\!\left( #2 \right)} 
\newcommand{\Probn}[1]{\mathbb{P}^{n}\!\left( #1 \right)} 
\newcommand{\Probx}[1]{\mathbb{P}^{x}\!\left( #1 \right)} 
\newcommand{\E}[1]{\mathbb{E}\left(#1\right)} 
\newcommand{\En}[1]{\mathbb{E}^{n}\left(#1\right)} 
\newcommand{\Ex}[1]{\mathbb{E}^{x}\left(#1\right)} 
\newcommand{\EI}[2]{\mathbb{E}^{#1}\left(#2\right)} 
\newcommand{\Filt}{\mathcal{F}} 
\newcommand{\ssum}[2]{\sum\limits_{#1}^{#2}}
\newcommand{\sint}[2]{\int\limits_{#1}^{#2}}
\newcommand{\sprod}[2]{\prod\limits_{#1}^{#2}}
\newcommand{\sbigcup}[2]{\bigcup\limits_{#1}^{#2}}
\newcommand{\lims}[1]{\lim\limits_{#1}}
\newcommand{\limsups}[1]{\limsup\limits_{#1}}
\newcommand{\liminfs}[1]{\liminf\limits_{#1}}
\newcommand{\sups}[1]{\sup\limits_{#1}}
\newcommand{\infs}[1]{\inf\limits_{#1}}
\newcommand{\bigcaps}[1]{\bigcap\limits_{#1}}
\newcommand{\bigcups}[1]{\bigcup\limits_{#1}}
\newcommand{\mins}[1]{\min\limits_{#1}}
\newcommand{\sums}[1]{\sum\limits_{#1}}
\newcommand{\ints}[1]{\int\limits_{#1}}
\newcommand{\sequence}[2]{\{#1\}_{#2}}
\newcommand{\Cb}{\bar{C}(\Reals^{d})}
\newcommand{\Czero}{C_{0}(\Reals^{d})}
\newcommand{\testfcts}{C^{\infty}_{c}(\Rd)}
\newcommand{\BoundedMeasurable}{B(\mathbb{R}^{d})}
\newcommand{\ProbsOnRd}{\mathcal{P}(\Reals^{d})}
\newcommand{\SkoSpace}{D_{\Reals^{d}}([0,\infty))}
\newcommand{\SkoSpaceOne}{D_{\Reals}([0,\infty))}
\newcommand{\Indicator}{\mathbf{1}} 
\newcommand{\IndOne}{\Indicator_{I_{1}}} 
\newcommand{\IndTwo}{\Indicator_{I_{2}}} 
\newcommand{\gt}{\rightarrow} 
\newcommand{\gtinf}{\rightarrow \infty} 
\newcommand{\Reals}{\mathbb{R}} 
\newcommand{\C}{\mathbb{C}} 
\newcommand{\Rd}{\Reals^{d}} 
\newcommand{\Def}{\mathcal{D}} 
\newcommand{\equaldist}{\stackrel{\mathcal{D}}{=}} 
\newcommand{\infinity}{\infty} 
\newcommand{\x}{\times} 
\newcommand{\scal}[2]{\langle#1,#2\rangle} 
\renewcommand{\Re}{\operatorname{Re}} 
\numberwithin{equation}{section}
\numberwithin{thm}{section}
\title{Solutions of martingale problems for L\'evy-type operators and stochastic
differential equations driven by L\'evy processes with discontinuous coefficients}
\author{Peter Imkeller \footnote{Institut f\"ur Mathematik,
Humboldt-Universit\"at zu Berlin, Unter den Linden 6, 10099 Berlin, Germany}  \and
Niklas Willrich \footnote{Weierstrass-Institut Berlin, Mohrenstr. 39, 10117
Berlin, Germany, \href{mailto:willrich@wias-berlin.de}{willrich@wias-berlin.de}}\\
}
\date{\today}
\begin{document}
\maketitle

\begin{abstract}
We show the existence of L\'evy-type stochastic processes in one space dimension with
characteristic triplets that are either discontinuous at thresholds, or are stable-like
with stability index functions for which the closures of the discontinuity sets are
countable. For this purpose, we formulate the problem in terms of a Skorokhod-space
martingale problem associated with non-local operators with discontinuous coefficients.
These operators are approximated along a sequence of smooth non-local operators giving
rise to Feller processes with uniformly controlled symbols. They converge uniformly
outside of increasingly smaller neighborhoods of a Lebesgue nullset on which the
singularities of the limit operator are located.
\end{abstract}

{\bf 2000 AMS subject classifications:} 60J25, 60H10, 60J75, 60G46, 60G52, 47G30.

{\bf Key words and phrases:} L\'evy process, stable process, stable-like
process,L\'evy-type process, discontinuous L\'evy characteristics, non-local operator,
Markov process, Feller process, symbol, martingale problem, weak solution, stochastic
differential equation, Skorokhod space, pseudo-differential operator.

\section{Introduction}

L\'evy processes, in particular processes with non-Gaussian
characteristics, have been playing an increasingly important role in the
modeling of natural phenomena in the recent decades. For instance,
Ditlevsen \cite{Ditlevsen1} conducts a study of Greenland ice core data
from the GRIP project which provide temperature proxies from a period of
about 100 T years covering roughly the last terrestrial glacial period. In
an attempt to find the best fitting one among models of dynamical systems
perturbed by symmetric $\alpha$-stable noise, he came up with a calibration
in which $\alpha\sim 1.75$ provides the best interpretation of the data
among these models. A look at the time series, however, reveals that
symmetric $\alpha$-stable noise is unrealistic as a modeling hypothesis,
since jump characteristics seem to depend strongly on the positions from
which jumps originate. This suggests that a more ample class of dynamical
systems perturbed by \emph{stable-like L\'evy noise}, i.e. with a space
($x$)-dependent stability index function $\alpha(x)$ might provide a better
modeling background. It has even been suggested to think about thresholds
$x_0$, below which roughly the noise acting on the system shows stable
behavior with stability index $\alpha_1$, while above the threshold a
possibly different stability index $\alpha_2$ is observed. Similar
questions have been asked in the context of the modeling of noise in
hydrodynamical models for diffusion of water in porous media in which
layers of different materials meet discontinuously at critical thresholds.

Already on the level of the modeling of noise processes in space dimension one, these
suggestions lead to considerable mathematical challenges. They trigger questions as: how
to construct stochastic processes behaving like stable processes with different stability
indices on the two sides of a threshold? This question was at the origin of the study
conducted in this paper. The problem we will solve here may be posed in its most general
form for one-dimensional stochastic processes in the following way: given a triple $(a, b,
\nu)$ with measurable functions $a, b:\mathbb{R}\to\mathbb{R}$ with $a>0$ and a Borel
measure valued measurable function $\mathbb{R}\ni x\mapsto \nu(x,\cdot)$; under which
conditions on the smoothness of $(a,b,\nu)$ will the triple be the characteristic triplet
of a L\'evy-type process?

The existence of a L\'evy-type stochastic process with characteristics of this type may,
as usual in the theory of stochastic processes, be proved in different ways. The problem
may be posed in terms of a stochastic differential equation in which the driving noise
reflects the assumptions on $a, b$ and $\nu$. Then one can proceed to show the existence
of strong solutions, or more generally of weak solutions. One may, given the properties of
$a, b$ and $\nu$, define the associated unbounded non-local operator $A$ and look for solutions of the the martingale problem on
appropriate Skorokhod spaces of c\`adl\`ag functions on $\mathbb{R}_+$. One may finally
investigate questions related to the links between weak solutions and the solutions of the
martingale problem. Our study below will be focused on the field of problems thus
circumscribed. There has been some work in the literature dedicated to various aspects
thereof. One may consult Bass \cite{BassSDE} for an overview of solutions of stochastic
differential equations driven by L\'evy processes with characteristic triplets with some
relaxed smoothness, and a rich reference list. Bass \cite{BassJumpProcesses} tackles
problems of existence and uniqueness of L\'evy-type processes from the point of view of
stochastic calculus and the theory of Markov processes, while Negoro
\cite{NegoroTransition} favors the approach via integral transforms and symbolic calculus
with a more analytical touch. Tsuchiya \cite{TsuchiyaLevy} treats stable-like processes.
Komatsu \cite{Komatsu} and
Stroock \cite{StroockLevyType} show the existence of solutions for martingale problems of
L\'evy-type operators, and investigate their well-posedness, either under the existence of
a smoothing Brownian component, or other additional continuity hypotheses. Boettcher
\cite{BoettcherTransience} uses tools of symbolic calculus to treat properties of
L\'evy-type processes that are stable-like with discontinuous coefficients, while assuming
(but not proving) the existence of strong Markov processes with these characteristics.
Fukushima and Uemura \cite{FukuUemuraHunt} show how to approach
L\'evy-type operators by means of lower-bounded semi-Dirichlet forms, and this way obtain
stable-like processes with coefficients that are smooth enough (see also
Schilling and Wang \cite{SchillingSemiDirichlet}).

In our approach the characterization of Feller processes via generalizations of the
L\'evy-Khinchin formula plays a crucial role. They are based on the theory of
pseudo-differential operators (see Jacob \cite{jacob1}, \cite{jacob2}, \cite{jacob3}). In
effect, they allow a representation of the non-local operators associated with Feller
processes by a type of Fourier inversion formula in which the generalization of the
characteristic exponent $q(\xi), \xi\in\mathbb{R},$ of the L\'evy-Khinchin formula, the
so-called \emph{symbol} $q(x,\xi), \xi, x\in\mathbb{R}$, appears. It can be seen as a
space ($x$)-dependent version of the exponent in the Fourier transform of the
corresponding Feller process. Symbols carry important information about the related
stochastic dynamics. In Schilling and Wang \cite{SchillingWangTransition} their growth
properties as functions of $\xi$, uniform in $x$, have been used to show the existence of
transition densities for the associated Feller dynamics, and are seen to give upper
estimates for their growth behavior. Our method of construction of the L\'evy-type
processes solving at least the martingale problem for the associated operator $A$ starts
in a general framework. It is later shown to agree with both the scenario of L\'evy
triplets that have a discontinuity at a fixed threshold, as well as with the one of purely
stable-like behavior with a discontinuous stability index function. In our main
approximation result Theorem \ref{thm:centralresult} we approach $A$ by a sequence of
operators $(A_n)_{n\in\mathbb{N}}$ which possess smooth coefficient triplets, and
corresponding symbols $(q_n)_{n\in\mathbb{N}}$. As an essential ingredient of our method,
we have to control the sequence of symbols \emph{uniformly in $n$} in a similar manner as
Schilling and Wang \cite{SchillingWangTransition} control one symbol. In consequence we
first obtain tightness of the unique solutions of the martingale problems
$(\mathbb{P}_n)_{n\in\mathbb{N}}$ associated with $(A_n)_{n\in\mathbb{N}}$. The
approximation further has to guarantee that the singularities of $A$ are contained in
$\cap_{m\in\mathbb{N}} U_m$, with a decreasing sequence $(U_m)_{m\in\mathbb{N}}$ of sets
with asymptotically vanishing Lebesgue measure, and that the $A_n$ converge to $A$
compactly uniformly on the complements of the $U_m$. To deduce that a cluster point
$\mathbb{P}$ of the sequence $(\mathbb{P}_n)_{n\in\mathbb{N}}$ solves the martingale
problem as well, on $U_m$ one uses once more the uniform  bounds on the transition
densities resulting from the control on the symbols. The construction of the appropriate
sequences $(A_n)_{n\in\mathbb{N}}$ and $(U_m)_{m\in\mathbb{N}}$ in the two scenarios
mentioned can be achieved. In case of the L\'evy characteristics that are discontinuous at
a threshold, this is done in Theorem \ref{thm:martingaleprobglueing}. In this case,
techniques developed in Kurtz \cite{EthierKurtz} further reveal that the solution
of the martingale problem also gives rise to a weak solution of the corresponding
stochastic differential equation (Theorem \ref{thm:weakexistencelevy}). In case the
closure of the set of discontinuities of the stability index function $\alpha$ is
countable,
Theorem \ref{thm:stablelikeapplication} provides the solution of the martingale problem.
Results on uniqueness of solutions or well-posedness of the martingale problems
considered, as well as the existence of strong solutions to stochastic differential
equations studied have to be left to future work.

The material of the paper is organized as follows. Section
\ref{sec:theoretischerhintergrund}
 contains summaries of the essentials about central
concepts on function spaces, Markov processes together with their semigroups, generators
and symbols, martingale problems on Skorokhod function spaces together with compactness
criteria on spaces of probability measures. We recall essentials about L\'evy processes,
in particular stable-like processes, and give an account of the relationship between
solutions to martingale problems and weak solutions of stochastic differential equations.
In section \ref{sec:existenzsatz} we provide the general existence theory for martingale
problems associated with singular operators. In section \ref{sec:anwendung} finally, the
general
existence theorem is applied to the scenarios of discontinuous L\'evy characteristics at a
threshold in space, and to stability index functions with discontinuity sets possessing
countable closures, to show existence of solutions for the martingale problem and weak
solutions of related stochastic differential equations.

\section{Theoretical background}\label{sec:theoretischerhintergrund}

\subsection{Function spaces, norms and notations}
If for $d \in \mathbb{N}$ we deal with Euclidean space $\Rd$ or $\mathbb{C}$, then
$\scal{\cdot}{\cdot}$ denotes the scalar product and $\|\cdot \|$ the Euclidean
norm. For the open ball of radius $r >0$ centered at $x \in \mathbb{R}^d$ we write
$B_{r}^{d}(x)$. We
omit the superscript for the
dimension, if it is clear from the context. The closure of a set $A$ in a
topological space will be written $\overline{A}$, and $A^{c}$ will denote its
complement. On function spaces $\|\cdot\|_{\infty}$ will be the uniform norm.
We consider the following subspaces of $M(\mathbb{R}^d)$, the space of Borel measurable
functions from $\Rd$ to $\Reals$. $\BoundedMeasurable$ is the space of bounded measurable
functions. By $\bar{C}(\Rd)$ we denote the space of bounded continuous functions.
More generally, for every metric space $E$, we write $\bar{C}(E)$ for the space of bounded
continuous functions
from $E$ to $\Reals$.
By $C^{k}(\Rd)$, $k \in \mathbb{N} \cup \{\infty\}$, we denote the space of functions
with continuous partial derivatives of order up to $k$, and by $C_{c}^{k}(\Rd)$ the linear
subspace of those functions that in addition have compact support.  $C_{c}^{\infty}(\Rd)$
will also be called the space of test functions.

$C_{0}(\Rd)$ symbolizes the space of continuous functions satisfying
\begin{equation}
 \lims{\|x\|\gt \infty}f(x)=0,
\end{equation}
while $\overline{C}^{1}(\Rd)$ stands for the space of bounded continuous functions with bounded
continuous
partial derivatives of first order.

For $d \in
\mathbb{N}$ we call an element $\alpha=(\alpha_{1},\ldots,\alpha_{d}) \in \mathbb{N}_{0}^{d}$
a multi-index and we introduce the following notation:
for $x=(x_{1},\ldots, x_{d}) \in \Rd$
and $\alpha \in \mathbb{N}_{0}^{d}$, we write
\begin{equation}
 x^{\alpha}:=x_{1}^{\alpha_{1}}\cdot \ldots \cdot x_{d}^{\alpha_{d}},
\end{equation}
and analogously for $\alpha \in \mathbb{N}_{0}^{d}$ and the composition of partial
derivatives $\partial_{x_{i}}:=\frac{\partial}{\partial x_{i}}$:
\begin{equation}
 \partial^{\alpha}:=\partial_{x_{1}}^{\alpha_{1}} \ldots
\partial_{x_{d}}^{\alpha_{d}}.
\end{equation}
We introduce the Schwartz space $\mathcal{S}(\Rd)$ by
\begin{equation}
 \mathcal{S}(\Rd):= \{ f \in C^{\infty}(\Rd) ~|~ \forall \alpha,\beta
\in \mathbb{N}_{0}^{d} ~:~ \sups{x \in \Rd}
|x^{\alpha}\partial^{\beta}f(x)|<\infty
\}.
\end{equation}
The Borel sets of a metric space $E$ are written $\mathcal{B}(E)$. For a set $A$ we
will denote by $\Indicator_{A}$ the indicator function of $A$. For
equality in distribution we will use the symbol $\equaldist$. For the induced distribution of a random
variable $X$ on a probability space $(\Omega,\Filt, \mathbb{P})$ taking values in a
measurable space $(E,\mathcal{E})$ we will write $\mathbb{P}^{X}(A):=\mathbb{P}\left( X
\in A \right), A \in \mathcal{E}$. For basic probability results like the Portmanteau
lemma or the continuous mapping theorem we refer to \cite{Kallenberg}. We will write
$\SkoSpace$ for the space of c\`adl\`ag functions from $\Rd$ to $\Reals$ equipped with
the Skorokhod topology (for a definition and properties of this space see chapter
3,\cite{EthierKurtz}).
If not stated otherwise, we will assume that a stochastic process takes its values in $\Rd$
for some $d\in\mathbb{N}$.

\subsection{Criteria for relative compactness}
In the following we consider the canonical process $X=(X_t)_{t\geq0}$ on $\SkoSpace$ and
a sequence of probability measures $\sequence{\mathbb{P}^{n}}{n\geq1}
\subset \mathcal{P}(\SkoSpace)$.
\begin{prop} \label{prop:relcomp1}
Let $(\Filt_{t})_{t \geq0}$ denote the canonical
filtration of $(X_t)_{t\geq0}$. The sequence $\sequence{\mathbb{P}^n}{n\geq1}$ is
relatively compact, if for all $t>0$:
\begin{equation}\label{RK1}
 \lims{K' \gtinf}\sups{n \geq 1} \Probn{\sups{s\leq t} \|X_s\| > K'}=0,
\tag{RK1}
\end{equation}
and for all $T>0$ there exists a family of non-negative random
variables $\sequence{\gamma_{n}(\delta)}{n\geq 1; 0<\delta< 1}$ such that
\begin{equation}\label{RK2}
 \En{\|X_{t+s}-X_t\|^{2} |\Filt_{t}}\leq
\En{\gamma_{n}(\delta)|\Filt_{t}} \tag{RK2}
\end{equation}
for $0 \leq t \leq T$, $0 \leq s \leq \delta$,
and
\begin{equation}\label{RK3}
 \lims{\delta \gt 0} \sups{n \geq 1}\En{\gamma_{n}(\delta)}=0. \tag{RK3}
\end{equation}
\end{prop}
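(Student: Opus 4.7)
The plan is to reduce the claim to a standard Kurtz-type relative compactness criterion for c\`adl\`ag processes (Ethier--Kurtz, Chapter 3, Theorem~3.8.6 / Corollary~3.8.8), which asserts that $\sequence{\mathbb{P}^{n}}{n\geq 1}\subset\mathcal{P}(\SkoSpace)$ is relatively compact as soon as (a) a compact containment condition holds on every bounded time interval, and (b) there exist non-negative random variables $\gamma_{n}(\delta)$ and an exponent $\beta>0$ such that
\[
\En{\|X_{t+s}-X_t\|^{\beta}\,|\,\Filt_{t}} \leq \En{\gamma_{n}(\delta)\,|\,\Filt_{t}}
\]
for $0\leq t\leq T$, $0\leq s\leq \delta$, together with $\lims{\delta \gt 0}\sups{n\geq 1}\En{\gamma_{n}(\delta)}=0$. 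Hypotheses (RK1)--(RK3) are designed to supply precisely these two ingredients, with $\beta=2$ and the Euclidean metric on $\Rd$.

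First I would verify compact containment. Since the closed Euclidean balls $\overline{B_{K'}(0)}\subset\Rd$ are compact, (RK1) states that for each $t>0$ and $\eta>0$ one may choose $K'$ so large that, uniformly in $n$, the entire path of $X$ on $[0,t]$ remains inside $\overline{B_{K'}(0)}$ with probability at least $1-\eta$; this is the strong form of compact containment demanded by the criterion. Second, (RK2) together with (RK3) is literally condition (b) above with $\beta=2$, so both hypotheses of the cited criterion are met and its conclusion delivers the asserted relative compactness of $\sequence{\mathbb{P}^{n}}{n\geq 1}$.

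The substance of the argument packaged inside the Ethier--Kurtz criterion consists in extracting Skorokhod-space tightness from the conditional $L^{2}$-increment bound (RK2). The standard route, which one would reproduce in a self-contained proof, is to partition $[0,T]$ into a grid of mesh $\delta$, apply (RK2) together with a Doob-type maximal inequality on each subinterval to control the oscillation of $X$ there in probability, sum these bounds via the tower property, and let $\delta\downarrow 0$ using (RK3) to obtain a uniform-in-$n$ bound on the Skorokhod modulus $w'(X,\delta,T)$. The main technical obstacle, and the real content of the criterion, is the passage from the deterministic-time bound (RK2) to a bound valid for arbitrary bounded stopping times $\tau\leq T$, as needed by the Aldous formulation of tightness; this is handled by approximating $\tau$ from above by discrete stopping times $\tau^{k}$ with finite range $\{t_{1},\ldots,t_{N}\}$ and applying (RK2) on each atom $\{\tau^{k}=t_{i}\}\in\Filt_{t_{i}}$, then passing to the limit $k\to\infty$ using right-continuity of paths and the uniform integrability provided by (RK3).
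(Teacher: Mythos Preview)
Your proposal is correct and matches the paper's approach: the paper's own proof consists of a single sentence citing Theorem 8.6 together with Remark 8.7 of Chapter 3 in Ethier--Kurtz, which is exactly the Kurtz-type criterion you invoke (with $\beta=2$ and (RK1) supplying the compact containment). Your additional paragraphs unpacking the mechanism behind that criterion go beyond what the paper provides, but the reduction itself is identical.
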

\begin{proof}
This is a special case of theorem 8.6 with remark 8.7, chapter 3 in
\cite{EthierKurtz}.
\end{proof}
The following result relates the relative compactness of sequences of distributions
$\sequence{\mathbb{P}^n}{n\geq1} \subset \mathcal{P}(\SkoSpace)$ to the relative
compactness of sequences of distributions $\sequence{(\mathbb{P}^n\circ f(X)^{-1})}{n\geq1}$
for $f \in \testfcts$, where $\mathbb{P}\circ Y^{-1}$ denotes the law of the random map $Y$ under the probability measure $\mathbb{P}.$
\begin{prop}\label{prop:relcomp2}
Let $\sequence{\mathbb{P}^{n}}{n\geq1}$ be a sequence of probability measures on $\SkoSpace$. Assume condition \eqref{RK1} is satisfied. Then
$\sequence{\mathbb{P}^{n}}{n\geq1}$ is relatively compact if and only if for all $f \in
\testfcts$ the sequence $\sequence{(\mathbb{P}^n\circ f(X)^{-1})}{n\geq1}$ is relatively compact.
\end{prop}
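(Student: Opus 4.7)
My plan is to treat the two implications separately, the forward one via the continuous mapping theorem and the harder backward one by invoking Theorem~9.1 in chapter~3 of \cite{EthierKurtz}. For the direction ``$\Rightarrow$'', I would fix $f \in \testfcts$ and consider the composition map $\Phi_f : \SkoSpace \to \SkoSpaceOne$, $x \mapsto f \circ x$. Since $f$ is bounded and uniformly continuous, the standard characterization of Skorokhod convergence via approximating time changes shows that $\Phi_f$ is continuous with respect to the respective Skorokhod topologies. Relative compactness of $\sequence{\mathbb{P}^{n}}{n\geq1}$ is then inherited by the push-forwards $\sequence{\mathbb{P}^{n}\circ \Phi_f^{-1}}{n\geq1} = \sequence{(\mathbb{P}^{n}\circ f(X)^{-1})}{n\geq1}$ via the continuous mapping theorem.

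For the direction ``$\Leftarrow$'', I would appeal to the Ethier--Kurtz criterion mentioned above, which says that for a subalgebra $C_a \subset \bar{C}(E)$ that strongly separates points, a sequence of probability measures on $D_E([0,\infty))$ is relatively compact if and only if a compact containment condition holds and, for each $f \in C_a$, the sequence of one-dimensional image laws is relatively compact in $\mathcal{P}(\SkoSpaceOne)$. In our setting $E = \Rd$, the class $\testfcts$ is an algebra under pointwise operations that strongly separates points via suitable bump functions, and the compact containment condition reduces to (RK1) since closed balls in $\Rd$ are compact. The hypotheses of the cited theorem are therefore met, and the assumed relative compactness of $\sequence{(\mathbb{P}^{n}\circ f(X)^{-1})}{n\geq1}$ for every $f \in \testfcts$ yields that of $\sequence{\mathbb{P}^{n}}{n\geq1}$.

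The main obstacle lies in the backward direction and consists in verifying that $\testfcts \subset \bar{C}(\Rd)$ is a strongly separating subalgebra and that compact containment in $\Rd$ is equivalent to (RK1). Once these routine facts are established, the argument reduces to a direct application of the Ethier--Kurtz theorem; the forward direction is essentially only the continuity of the composition map on Skorokhod space.
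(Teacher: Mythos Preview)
Your approach is correct and matches the paper's proof, which consists of a direct citation of Theorem~9.1, chapter~3 of \cite{EthierKurtz}; your unpacking of the hypotheses (that $\testfcts$ is a strongly separating subalgebra of $\bar{C}(\Rd)$ and that \eqref{RK1} is the compact containment condition in $\Rd$) is exactly what applying that theorem requires. The separate continuous-mapping argument for the forward direction is sound but unnecessary, since the cited theorem is already an equivalence under the compact containment assumption.
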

\begin{proof}
See Theorem 9.1, chapter 3, \cite{EthierKurtz}.
\end{proof}

\subsection{Markov processes, semigroups, generators and symbols}
We give a quick reminder of the concept of semigroups arising in the study of Markov processes.
Let $(L,\|\cdot\|_L)$ be a Banach space and $L \subset \BoundedMeasurable$.
\subsubsection{Semigroups of operators}
A family of bounded operators $(T_{t})_{t \geq 0}$ on  $L$ is said to be a \emph{semigroup} if
it satisfies for all $t,s \geq 0$ the conditions $T_{0}=\operatorname{id}_{L}$ and
$T_{t+s}=T_{t}\circ T_{s}$, where ${id}_{L}$ is the identity operator on $L$.
We say that $(T_{t})_{t \geq 0}$ is \emph{strongly continuous}, if $\lims{t\gt
0}T_{t}f=f$ for all $f \in L$. $(T_{t})_{t \geq 0}$ will be called a \emph{contraction
semigroup} if $\|T_{t}\|_{L} \leq 1$ for all $t \geq 0$ and \emph{positive} if for all $t \geq 0$
and $ f \geq 0$ the inequality $T_{t}f\geq 0$ holds.
\begin{definition}
A positive strongly continuous contraction semigroup $(T_{t})_{t\geq 0}$ on
$(\Czero,\|\cdot\|_{\infty})$ will be called a \emph{Feller semigroup}.
\end{definition}
A Feller semigroup can be uniquely extended to a semigroup $(T_{t})_{t\geq0}$ on
$\BoundedMeasurable$. If $T_{t}1=1$ for all $t \geq 0$, then $(T_{t})_{t\geq
0}$ is \emph{conservative}.\\
In the following we will write $\Def(A)$ for the domain of a possibly
unbounded operator $A$ between normed spaces and we will sometimes write the operator
as $(A,\Def(A))$.
Further we denote by $\mathcal{R}(A)$ the range of $A$. We say that $A$ is an operator
 \emph{on} a Banach space $L$, if $\Def(A)$,$\mathcal{R}(A) \subset L$.
Let $(A,\Def(A))$ be an operator on $L$ and $G \subset \Def(A)$. The \emph{restriction}
$A|_{G}$ of $A$ on $G$ is defined as $A|_{G}f:=Af, f \in G$ and
$\Def(A|_{G}):=G$.
Let $(A_{1},\Def(A_{1})),(A_{2},\Def(A_{2}))$ be two operators on the Banach space
$L$. If $\Def(A_{1}) \subset \Def(A_{2})$
and $A_{2}|_{\Def(A_{1})}=A_{1}$, we say that $A_{2}$ is an \emph{extension} of $A_{1}$
on $L$.\\
For an operator $A$ on $L$ we denote by $\mathcal{G}(A):=\{(f,Af) \in L
\times
L ~|~ f \in \Def(A)\}$ the \emph{graph} of $A$. If $\mathcal{G}(A)$ is closed in the
product topology of $L \times L$, we say that $A$ is closed. The smallest closed
extension of an operator $A$ is called its closure $\overline{A}$.
\begin{definition}
Let $(T_{t})_{t \geq 0}$ be a semigroup on $L$. We define its (infinitesimal) generator
$A$ as follows:
\begin{equation}
  Af := \lims{t \gt 0} \frac{T_{t}f-f}{t}
\end{equation}
for all $f \in \Def(A)$. Here $\Def(A)$ is the set of all $f \in L$ for which the limit exists in a strong sense.
\end{definition}

\subsubsection{Markov processes}

We will work with the concept of \emph{universal Markov processes} (see \cite{jacob3}). In the
study of universal Markov processes a semigroup appears in a natural way.
For $t \geq 0$ we consider
\begin{equation} \label{eq:semigroup}
 T_{t}f(x):=\Ex{f(X_t)}, x \in \Reals^{d}, \quad \text{for all} ~f \in
\BoundedMeasurable.
\end{equation}
One can show that $(T_{t})_{t \geq 0}$ is a positive contraction semigroup for every
universal Markov process $(X_t)_{t\geq0}$. In certain cases one can also construct,
starting from a given semigroup, a universal Markov process with this particular
semigroup.
\begin{thm}\label{thm:existstochastiprocesses}
Let  $(T_{t})_{t\geq }$ be a conservative Feller semigroup. For every $\mu \in
                     \ProbsOnRd$, there exists a stochastic
process $(\Omega,\mathcal{F},\mathbb{P}^{\mu},(X_t)_{t\geq0})$ such that
\begin{equation}
\mathbb{P}^{\mu}(X_{0} \in \Gamma)=\mu(\Gamma) \quad \text{for all }~\Gamma
\in \mathcal{B}(\Rd)
\end{equation}
and
\begin{equation}
 T_{s}f(X_t)=\mathbb{E}^{\mu}\left({f(X_{t+s})|X_t}\right) \quad \text{for
all}~ f \in
\Czero.
\end{equation}
Such a process will be called \emph{Feller process}. Its finite dimensional
distributions are uniquely defined by the semigroup $(T_{t})_{t \geq 0}$ and $\mu$.
\end{thm}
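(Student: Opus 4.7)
The plan is to build the process in three stages: construct a Markov transition kernel from the semigroup, apply Kolmogorov's extension theorem to obtain a measure on a product space with the prescribed finite-dimensional distributions, and finally exploit the Feller property to extract a c\`adl\`ag modification living on $\SkoSpace$.

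First I would extend each $T_t$ from $\Czero$ to $\BoundedMeasurable$ (an extension already noted in the excerpt) and define a transition kernel $p(t,x,\Gamma):=T_t\Indicator_{\Gamma}(x)$ for $t\geq 0$, $x\in\Rd$, $\Gamma\in\mathcal{B}(\Rd)$. Positivity of the semigroup gives $p(t,x,\cdot)\geq 0$, conservativity $T_t1=1$ gives $p(t,x,\Rd)=1$, and the semigroup identity $T_{t+s}=T_t\circ T_s$ becomes the Chapman--Kolmogorov equation
\[
p(t+s,x,\Gamma)\;=\;\int_{\Rd}p(s,y,\Gamma)\,p(t,x,dy).
\]
For any tuple $0\leq t_0<t_1<\cdots<t_n$ and Borel sets $B_0,\ldots,B_n$ the formula
\[
\nu_{t_0,\ldots,t_n}(B_0\times\cdots\times B_n)
:=\int_{B_0}\mu(dx_0)\int_{B_1}p(t_1-t_0,x_0,dx_1)\cdots\int_{B_n}p(t_n-t_{n-1},x_{n-1},dx_n)
\]
then defines a consistent family by Chapman--Kolmogorov, and Kolmogorov's extension theorem produces a probability measure $\mathbb{Q}^{\mu}$ on $(\Rd)^{[0,\infty)}$ under which the coordinate process $(X_t)$ has initial law $\mu$ and satisfies $\EI{\mu}{f(X_{t+s})\mid X_t}=T_sf(X_t)$ for every $f\in\Czero$. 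Uniqueness of the finite-dimensional distributions in terms of $(T_t)$ and $\mu$ is built into the construction.

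The main obstacle is then to replace this raw product-space construction by a version whose paths lie in $\SkoSpace$. Here the Feller property is decisive. Strong continuity makes $t\mapsto X_t$ continuous in probability; moreover, for the resolvent $R_\lambda f(x):=\int_0^\infty e^{-\lambda t}T_tf(x)\,dt$ with $\lambda>0$ and $f\in\Czero$, $f\geq 0$, the process $e^{-\lambda t}R_\lambda f(X_t)$ is a nonnegative supermartingale under $\mathbb{Q}^{\mu}$. Doob's regularization theorem therefore yields almost-sure one-sided limits along the rationals. Invariance of $\Czero$ under $(T_t)$ together with a countable subfamily of $\Czero$ that separates points of $\Rd$ allow one to upgrade these to a $\Rd$-valued c\`adl\`ag modification $(\tilde X_t)_{t\geq 0}$ whose finite-dimensional marginals coincide with those of $X$. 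Transporting this modification to the canonical space produces the Feller process $(\Omega,\mathcal{F},\mathbb{P}^{\mu},(X_t)_{t\geq 0})$ asserted in the theorem.
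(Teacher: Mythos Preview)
Your argument is correct and is precisely the standard route; the paper itself gives no independent proof but merely cites Ethier--Kurtz (chapter~4, Theorems~1.6 and~2.7), whose proof proceeds exactly along the lines you sketch---Riesz representation to get transition kernels, Kolmogorov extension for the finite-dimensional distributions, and then the resolvent supermartingale/Doob regularization for path regularity.

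One small remark on scope: the theorem as stated here only asserts the existence of \emph{some} stochastic process with the right initial law and the semigroup identity $T_s f(X_t)=\mathbb{E}^{\mu}(f(X_{t+s})\mid X_t)$; it does not claim c\`adl\`ag paths. The paper separates that off into the sentence immediately following the theorem, citing Jacob (theorem~3.4.19). So your third stage---the supermartingale regularization producing a $\SkoSpace$-valued modification---actually proves more than the theorem requires, and could be omitted if you want to match the statement exactly. It is of course the natural next step and does no harm here.
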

\begin{proof}
See theorem 2.7, chapter 4, \cite{EthierKurtz} and theorem 1.6, chapter 4,
\cite{EthierKurtz}.
\end{proof}
Furthermore the Feller processes associated with a given Feller semigroup and the
initial distributions $\delta_{x}, x \in \Reals,$ form a universal Markov process
(corollary 3.4.10 \cite{jacob3}). One can always assume that a modification of the Feller
process with paths in $\SkoSpace$ is given (theorem 3.4.19,\cite{jacob3}).
This means that we can work in the setting of a canonical process on
$\SkoSpace$ to study Feller processes.\\

\subsubsection{Symbols of Feller processes}\label{sec:GeneratorSymbol}
A useful approach to the study of the generator in case it is a
pseudo-differential operator is to study its symbol.
Let $A$ be a generator of a Feller process with $C_{c}^{\infty}(\Rd)
\subset \Def(A).$
Then the restriction of $A$ on $\testfcts$  is a pseudo-differential operator, given by
\begin{equation}
 Af(x)=-\int e^{i\scal{x}{\xi}}q(x,\xi)\hat{f}(\xi)d\xi
\quad \text{for} ~
f \in \testfcts,
\end{equation}
where $\hat{f}$ denotes the Fourier transform of $f$ described by
$\hat{f}(\xi):=\frac{1}{(2\pi)^d}\int e^{-i\scal{x}{\xi}}f(x)dx, \xi \in \Rd$.
The function $q(\cdot,\cdot):\Rd \x \Rd \rightarrow \C$ is called the symbol of the
operator $A$.\\
The use of the symbol in the study of Markov processes has been an active area of research
in the past years. A detailed exposition of the approach can be found in
\cite{jacob1},\cite{jacob2},\cite{jacob3}.
Especially in the case of Feller processes the approach seems to be a very efficient one.
In the following we summarize some results, which we will use in our proofs.
We start by stating a bound on the maximal deviation of a Feller process from its starting
point before time $t$.
\begin{prop}\label{prop:exittime}
Let $((X_t)_{t\geq0},\mathbb{P}^{x})$ be a Feller process in $\Rd$
with generator $(A,\Def(A))$ and starting point $x \in \Rd$, such that $\testfcts
\subset \Def(A)$.  We denote its symbol by $q$. Assume there exists $C>0$ such that for
all $\xi \in \Reals^d$
\begin{equation}
\begin{split}
\|q(\cdot,\xi)\|_{\infty}   \leq &\,  C(1+\|\xi\|^{2}),\\
q(\cdot,0)  = &\,  0.
\end{split}
\end{equation}
Then for all $K'>0$ and $t \geq0$ we have
\begin{equation}
 \Probx{\sups{s\leq t} \|X_s-x\| \geq K'} \leq Ct\sups{\xi\leq 1/K'}
\|q(\cdot,\xi)\|_{\infty}.
\end{equation}
\end{prop}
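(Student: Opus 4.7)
The plan is to combine Dynkin's formula for a carefully scaled test function with the pseudo-differential representation of $A$. Introduce the exit time
$$\tau := \inf\{s\ge 0 : \|X_s-x\| \ge K'\},$$
so that $\{\sups{s\le t}\|X_s-x\|\ge K'\}=\{\tau\le t\}$ up to a null set, by right-continuity of paths. Fix a non-negative $\chi \in \testfcts$ with $\chi(0)=1$, $\chi\le 1$, and $\chi\equiv 0$ on $\{\|z\|\ge 1\}$, and set $u(y):=\chi((y-x)/K')$. Then $u\in\testfcts\subset\Def(A)$, $u(x)=1$, and $u$ vanishes on $\{\|y-x\|\ge K'\}$.

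Optional sampling applied to the Dynkin martingale $u(X_s)-u(x)-\sint{0}{s}Au(X_r)\,dr$ at the bounded stopping time $t\wedge\tau$ yields
$$\EI{x}{u(X_{t\wedge\tau})} - 1 = \EI{x}{\sint{0}{t\wedge\tau}Au(X_r)\,dr}.$$
On $\{\tau\le t\}$ right-continuity forces $\|X_\tau-x\|\ge K'$, so $u(X_\tau)\Indicator_{\{\tau\le t\}}=0$ and hence $\EI{x}{u(X_{t\wedge\tau})}=\EI{x}{u(X_t)\Indicator_{\{\tau>t\}}}\le\Probx{\tau>t}$. Rearranging,
$$\Probx{\tau\le t} \le 1 - \EI{x}{u(X_{t\wedge\tau})} \le t\,\|Au\|_\infty.$$

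It remains to control $\|Au\|_\infty$. From the pseudo-differential representation, $\hat u(\xi)=(K')^d e^{-i\scal{x}{\xi}}\hat\chi(K'\xi)$, and the substitution $\eta=K'\xi$ gives
$$|Au(y)| \le \sint{\Rd}{} |q(y,\eta/K')|\,|\hat\chi(\eta)|\,d\eta.$$
The main obstacle is that this integral extends to arbitrarily high frequencies, whereas the desired bound involves only $\sups{\|\xi\|\le 1/K'}\|q(\cdot,\xi)\|_\infty$. To bridge the gap, one uses that for each fixed $y$ the map $\xi\mapsto q(y,\xi)$ is a continuous negative definite function vanishing at zero, which therefore satisfies the Peetre-type sub-additive estimate $|q(y,\lambda\xi_0)|\le 2(1+\lambda^2)|q(y,\xi_0)|$ for all $\lambda\ge 0$. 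Applying this with $\xi_0:=(\eta/\|\eta\|)/K'$ and $\lambda:=\|\eta\|$ when $\|\eta\|\ge 1$, and using the trivial estimate when $\|\eta\|<1$, one obtains
$$|q(y,\eta/K')| \le 2(1+\|\eta\|^2)\sups{\|\xi\|\le 1/K'} |q(y,\xi)|.$$
Since $\hat\chi\in\mathcal{S}(\Rd)$, the factor $2\sint{\Rd}{}(1+\|\eta\|^2)|\hat\chi(\eta)|\,d\eta$ is a finite constant $C_0$ depending only on $\chi$. Combined with the Dynkin bound this yields the assertion, with $C_0$ absorbed into the constant $C$ of the statement; the polynomial growth assumption on $q$ enters only through the finiteness of the right-hand side.
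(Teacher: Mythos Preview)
The paper does not give its own proof here; it simply cites Proposition~4.3 of Schilling--Wang. Your overall architecture --- a scaled cutoff function, Dynkin's formula with optional stopping at $t\wedge\tau$, and then a frequency-side estimate for $Au$ --- is exactly the standard route taken in that reference, and the martingale part of your argument is correct.

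There is, however, a genuine error in your symbol estimate. The pointwise ``Peetre-type'' inequality
\[
|q(y,\lambda\xi_0)| \le 2(1+\lambda^2)\,|q(y,\xi_0)|
\]
is \emph{false} for continuous negative definite functions. Take $d=1$ and $q(\xi)=1-\cos\xi$ (the symbol of a compound Poisson process). With $\xi_0=2\pi$ and $\lambda=\tfrac12$ one gets $q(\pi)=2$ on the left but $q(2\pi)=0$ on the right. So you cannot bound $q(y,\eta/K')$ by $q$ evaluated at the single direction $\eta/(\|\eta\|K')$.

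What \emph{is} true --- and what actually yields your displayed bound --- is the standard estimate for continuous negative definite functions $\psi$ with $\psi(0)=0$:
\[
|\psi(\xi)| \le 2(1+\|\xi\|^{2}) \sup_{\|\eta\|\le 1}|\psi(\eta)|,
\]
obtained from the subadditivity of $\sqrt{|\psi|}$. Apply this to the rescaled symbol $\psi(\cdot)=q(y,\cdot/K')$, which is again negative definite and vanishes at the origin, to get
\[
|q(y,\eta/K')| \le 2(1+\|\eta\|^{2})\sup_{\|\xi\|\le 1/K'} |q(y,\xi)|.
\]
This is precisely the inequality you wrote down, but the justification must go through the supremum over the ball from the start, not through a single direction $\xi_0$. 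With this correction the rest of your argument goes through unchanged.
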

\begin{proof}
See Proposition 4.3,\cite{SchillingWangTransition}.
\end{proof}
The next result will be very important for our proofs, since it allows us to show the
existence of a transition density of a Feller process and bound it explicitly in
terms of the symbol.
\begin{thm}\label{thm:TransitionDensities}
 Let $((X_t)_{t\geq0},\mathbb{P})$ be a Feller process with generator $(A,\Def(A))$,
such that $\testfcts \subset \Def(A)$. Then
$A|_{\testfcts} = -
q( \cdot,D)$
is a pseudo-differential operator with symbol $q$. Assume that $q$ satisfies the
properties: For some $C>0$ and for all $\xi \in \Reals^d$
\begin{equation}
 \| q(\cdot,\xi)\|_{\infty} \leq C(1+\|\xi\|^{2}) \quad \text{and} \quad
q(\cdot,0) =0.
\end{equation}
If moreover
\begin{equation}
 \lims{\|\xi \|\gtinf} \frac{\infs{z \in \Reals} \Re
q(z,\xi)}{\log(1+\|\xi\|)}= \infty,
\end{equation}
then the process $(X_t)_{t \geq 0}$ has a transition density $p(t,x,y),t \in
[0,\infty), x,y \in \Rd,$ with respect to Lebesgue measure and the following inequality holds for $t>0$:
\begin{equation}
 \sups{x,y \in \Rd} p(t,x,y) \leq \frac{1}{(4\pi)^{d}} \int \exp\left(
-\frac{t}{16}\infs{z\in \Rd} \Re q(z,\xi) \right) d\xi.
\end{equation}
\end{thm}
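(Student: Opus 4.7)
The plan is to exploit the pseudo-differential representation $A|_{\testfcts}=-q(\cdot,D)$ in order to control the characteristic function of $X_t$ by the symbol $q$, and then to recover the transition density and its sup-bound by Fourier inversion.

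The central step is to establish a pointwise estimate of the form
\begin{equation*}
\left| \EI{x}{e^{i\scal{\xi}{X_t-x}}} \right| \leq \exp\!\left(-\frac{t}{16}\infs{z\in\Rd}\Re q(z,\xi)\right),\qquad x,\xi\in\Rd,\ t\geq 0.
\end{equation*}
Formally, Dynkin's formula $\EI{x}{f(X_t)}=f(x)+\int_0^t\EI{x}{Af(X_s)}\,ds$ applied to $f(y)=e_{\xi}(y):=e^{i\scal{\xi}{y}}$, and divided by $e_{\xi}(x)$, yields the integral equation
\begin{equation*}
\phi_t(x,\xi):=\EI{x}{e^{i\scal{\xi}{X_t-x}}} = 1 - \int_0^t \EI{x}{q(X_s,\xi)\,e^{i\scal{\xi}{X_s-x}}}\,ds.
\end{equation*}
Since $e_{\xi}\notin\testfcts$, this identity has to be justified by approximating $e_{\xi}$ with cut-offs $\chi_k e_{\xi}\in\testfcts$, using the extension of the Feller semigroup to $\BoundedMeasurable$ and the growth bound $\|q(\cdot,\xi)\|_{\infty}\leq C(1+\|\xi\|^2)$ together with $q(\cdot,0)=0$ to pass to the limit. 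Once the integral equation is available, the bound follows from a Duhamel/Gronwall-type argument combined with the decomposition $q=\Re q+i\Im q$ and a doubling trick (passing from $\xi$ to $2\xi$ and symmetrising), which produces the universal constant $1/16$.

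With the characteristic-function bound in hand, the logarithmic growth hypothesis guarantees that $\xi\mapsto\exp(-\tfrac{t}{16}\infs{z\in\Rd}\Re q(z,\xi))$ decays faster than any polynomial for every $t>0$, and in particular lies in $L^1(\Rd)$. Consequently $\phi_t(x,\cdot)\in L^1(\Rd)$ uniformly in $x$, and the Fourier inversion formula (in the convention fixed in Section \ref{sec:theoretischerhintergrund}) shows that the law of $X_t$ under $\ProbI{x}{\cdot}$ has a Lebesgue density $p(t,x,\cdot)$. Bounding this density by the $L^1$-norm of $\phi_t(x,\cdot)$ immediately yields the claimed inequality up to a constant $(2\pi)^{-d}$; the sharper constant $(4\pi)^{-d}$ is obtained by splitting $t=t/2+t/2$, applying Chapman--Kolmogorov $p(t,x,y)=\int p(t/2,x,z)p(t/2,z,y)\,dz$, and estimating the product of half-step densities via Cauchy--Schwarz and Plancherel.

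The principal obstacle is the rigorous passage from the formal Dynkin identity to the integral equation for $\phi_t(\cdot,\xi)$, since the natural test function $e_\xi$ lies neither in $\Czero$ nor in $\testfcts$. The cut-off approximation $\chi_k e_\xi$ must be controlled so that $A(\chi_k e_\xi)$ converges in a sense strong enough to interchange limit and expectation, and so that the $x$-dependence of $q(x,\xi)$ does not destroy the Gronwall argument that extracts the exponential with the explicit constant $1/16$. It is precisely at this step that the two structural assumptions on $q$, namely quadratic growth and vanishing at $\xi=0$, enter in an essential way.
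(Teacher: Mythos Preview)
The paper does not prove this statement at all: its ``proof'' consists of the single line ``See Theorem 1.1, \cite{SchillingWangTransition}.'' The result is quoted from Schilling and Wang as a black box and used later in the proof of Theorem \ref{thm:centralresult}.

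Your sketch is therefore not a comparison target for the paper, but it does track the strategy of the cited reference fairly closely: the Schilling--Wang argument also hinges on an exponential bound for the characteristic function $\lambda_t(x,\xi)=\EI{x}{e^{i\scal{\xi}{X_t-x}}}$ in terms of $\inf_z\Re q(z,\xi)$, followed by Fourier inversion using the Hartman--Wintner-type growth condition to ensure integrability. Two remarks on your outline. First, the passage from the formal Dynkin identity to the integral equation for $\lambda_t$ is handled in \cite{SchillingWangTransition} not by cutting off $e_\xi$ directly, but via a differential inequality for $t\mapsto\sup_x|\lambda_t(x,\xi)|$ derived from the semigroup; your cut-off route is plausible but the limit $A(\chi_k e_\xi)\to -q(\cdot,\xi)e_\xi$ requires more than the quadratic growth bound alone (one needs control of the non-local part of $A$ acting on the tails of $\chi_k$). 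Second, your explanation of the constants is heuristic: in the reference the factor $1/16$ arises from a specific subadditivity/positivity inequality for $\Re q$ rather than a ``doubling trick'', and the $(4\pi)^{-d}$ comes directly from that estimate and the Fourier convention, not from a Chapman--Kolmogorov splitting. None of this affects the overall shape of the argument, but if you intend to supply a self-contained proof you should consult \cite{SchillingWangTransition} for these technical points.
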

\begin{proof}
See Theorem 1.1,\cite{SchillingWangTransition}.
\end{proof}
In case the canonical process $(X_t)_{t\geq0}$ is a L\'evy process under $\mathbb{P}$,
the symbol is
independent of $x$ and the conditions reduce to
\begin{equation}\label{eq:hartmannwintner}
  \lims{\|\xi \|\gtinf} \frac{\Re q(\xi)}{\log(1+\|\xi\|)}= \infty.
\end{equation}
We will call this condition the \emph{Hartman-Wintner condition}. In
\cite{hartmannwintner} Hartman and Wintner showed that this condition is sufficient for
a L\'evy process to possess a continuous transition density in $C_0(\Rd)$ (a discussion
and an extension of this result can also be found in \cite{knopova1}).\\
We will also use the following theorem, which provides us with a way to construct Feller
processes starting with symbols of L\'evy processes (continuous negative definite
functions with $c=0$). We denote by $M^T$ the transposition of a matrix $M$.
\begin{thm}\label{thm:glueing}
Let $q:\Reals^d \times \Reals^d \gt \C$ be a function of the following structure:
\begin{equation}
 q(x,\xi)= \psi (\phi^{T}(x)\xi),x \in \Rd, \xi \in \Rd,
\end{equation}
where $\psi:\Reals^{n} \gt \C$ is a continuous negative-definite function with $c=0$
and
$\phi:\Rd \gt \Reals^{d \times n}$ is bounded and Lipschitz-continuous. Then there exists
a probability measure $\mathbb{P}$ under which the canonical process is Feller, such that
the generator $(A,\Def(A))$ satisfies
$C^{\infty}_{c}(\Reals^{d}) \subset
\Def(A)$ and $A$ has the symbol $q$.
\end{thm}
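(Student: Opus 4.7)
The plan is to realize the claimed Feller process as the solution of a L\'evy-driven SDE, then identify its symbol via It\^o's formula. The argument proceeds in four steps.

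First, since $\psi:\Reals^n\to\C$ is continuous negative-definite with $c=0$, the L\'evy--Khinchin formula produces a L\'evy process $(Y_t)_{t\geq0}$ in $\Reals^n$ whose characteristic function is $\E{e^{i\scal{\xi}{Y_t}}}=e^{-t\psi(\xi)}$. For each starting point $x\in\Rd$, I would then consider the SDE
\begin{equation*}
dX_t = \phi(X_{t-})\,dY_t, \qquad X_0 = x.
\end{equation*}
Because $\phi$ is bounded and globally Lipschitz, standard existence and uniqueness theory for SDEs driven by semimartingales (Protter's fixed point argument) yields a unique strong solution $(X^x_t)_{t\geq0}$ with c\`adl\`ag paths, continuous in the starting point in the sense of $L^2$--tightness.

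Second, define $T_tf(x):=\Ex{f(X^{x}_t)}$ for $f\in\Czero$, $t\geq0$. Positivity, contraction and the semigroup property $T_{t+s}=T_t\circ T_s$ follow from the Markov property of SDE solutions driven by a L\'evy process. The hard part of this step is establishing the Feller property $T_t(\Czero)\subset\Czero$: continuity of $x\mapsto T_tf(x)$ comes from continuous dependence of $X^x_t$ on $x$ together with dominated convergence; vanishing at infinity is obtained by noting that the increment $X^x_t-x=\int_0^t \phi(X^x_{s-})\,dY_s$ has law which is tight \emph{uniformly in} $x$ because $\phi$ is bounded, so that for $f\in\Czero$ supported in $B_R(0)$ the bound
\begin{equation*}
|T_tf(x)|\leq \|f\|_\infty\, \Probx{\|X^{x}_t - x\|\geq \|x\|-R}
\end{equation*}
goes to $0$ as $\|x\|\to\infty$. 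Strong continuity $T_tf\to f$ as $t\to0$ follows from right-continuity of paths at $0$ and dominated convergence.

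Third, to identify the symbol on $\testfcts$ I would apply It\^o's formula to $f\in\testfcts$ combined with the L\'evy--It\^o decomposition of $Y$. Writing $f$ as a Fourier inverse and exchanging order of integration gives, after the standard compensation of small jumps,
\begin{equation*}
\lims{t\gt 0}\frac{T_tf(x)-f(x)}{t} \;=\; -\int e^{i\scal{x}{\xi}}\,\psi\bigl(\phi^{T}(x)\xi\bigr)\,\hat f(\xi)\,d\xi,
\end{equation*}
which shows that $\testfcts\subset\Def(A)$ and that $A|_{\testfcts}$ has symbol $q(x,\xi)=\psi(\phi^T(x)\xi)$.

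The main obstacle is the $\Czero$--preservation in the second step: one needs a clean uniform-in-$x$ tightness estimate for $X^x_t-x$ that survives the possibly infinite-variance L\'evy noise. Since $\phi$ is bounded, the integrand in the stochastic integral is uniformly bounded, and standard tail estimates for stochastic integrals against L\'evy processes (e.g.\ via the Burkholder--Davis--Gundy inequality for the martingale part together with compensator estimates for the large-jump part) give the required tightness. With this in hand, Theorem~\ref{thm:existstochastiprocesses} supplies a probability measure $\Prob{\cdot}$ under which the canonical process on $\SkoSpace$ has the Feller semigroup $(T_t)_{t\geq0}$, and the symbol computation above finishes the proof.
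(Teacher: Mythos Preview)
The paper does not actually prove this theorem; its proof consists of a single-line citation to Corollary~3.7 of Schilling--Schnurr. Your proposal is, in outline, precisely the Schilling--Schnurr argument (construct the L\'evy-driven SDE $dX_t=\phi(X_{t-})\,dY_t$, establish the Feller property of its transition semigroup, then identify the symbol via It\^o's formula), so you are reproducing the proof the paper defers to rather than offering an alternative route.

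The sketch is correct in strategy, but one step deserves more care than you give it: the inclusion $\testfcts\subset\Def(A)$ requires that $\tfrac{1}{t}(T_tf-f)$ converge to $-q(\cdot,D)f$ \emph{uniformly} on $\Rd$, not merely pointwise as your displayed limit suggests. What It\^o's formula actually delivers is the Dynkin identity $T_tf(x)-f(x)=\int_0^t T_s(Lf)(x)\,ds$, where $L$ is the integro-differential operator with symbol $q$; passing from this to uniform convergence of the difference quotient requires $Lf\in\Czero$, so that $T_s(Lf)\to Lf$ in $\|\cdot\|_\infty$ by the Feller property already established. Verifying $Lf\in\Czero$ for $f\in\testfcts$ uses continuity and boundedness of $\phi$ together with the growth bound $|\psi(\eta)|\leq C(1+\|\eta\|^2)$, and this is a second place where the regularity of $\phi$ is genuinely used. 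A parallel remark applies to your strong-continuity step: pointwise convergence $T_tf(x)\to f(x)$ from path right-continuity is immediate, but you need either the standard upgrade lemma (pointwise convergence plus positivity plus contraction on $\Czero$ implies uniform convergence) or the uniform-in-$x$ tightness of $X_t^x-x$ that you invoke later. None of this is a genuine obstruction---it is exactly what Schilling--Schnurr fill in---but it should be flagged as requiring argument rather than absorbed into ``dominated convergence''.
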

\begin{proof}
This is Corollary 3.7,\cite{SchillingSchnurrSDE}.
\end{proof}
We finish this section by pointing out that if the canonical process is Feller under
$\mathbb{P}$, such that the conditions of theorem
\ref{thm:TransitionDensities} are satisfied, it is a \emph{strong} Feller process, i. e. its semigroup
maps bounded measurable functions to continuous bounded functions. In this case, one can
write the symbol also as (see section 3.9, \cite{jacob3})
\begin{equation}
 q(x,\xi)=-\lims{t \gt 0} \frac{\Ex{e^{i\scal{X_t-x}{\xi}}}-1}{t}.
\end{equation}
This gives a probabilistic interpretation of the symbol (so far defined entirely in
analytic terms).

\subsubsection{L\'evy processes} \label{sec:levyprocesses}
L\'evy processes can be identified by the characteristic functions of their
increments via the L\'evy-Khinchine formula.
To briefly recall its main features, we first introduce the concept of a continuous negative-definite function.
\begin{definition}
A function $q:\Rd \gt \C$ is continuous negative-definite if it can be written in the
form
\begin{equation}
q(\xi)=c-i\scal{b}{\xi}+\frac{1}{2}\scal{\xi}{a\xi}-\ints{\Rd\backslash\{0\}}
(e^
{i\scal{\xi}{y}}-1-i\scal{\xi}{y}\Indicator_{\overline{B^{d}_{1}(0)}}(y))\nu(dy)
\end{equation}
with $c \geq 0$, $b \in \Rd$, $a \in \Reals^{d \x
d}$ a positive-semidefinite symmetric matrix and $\nu$ a measure on
$\mathcal{B}((\Rd\backslash\{0\})$ satisfying
\begin{equation}
 \ints{\Rd\backslash\{0\}}(1\wedge \|y\|^{2})\nu(dy)<\infinity.
\end{equation}
This measure will be called \emph{L\'evy measure}.
\end{definition}
One can extend the definition of $\nu$ to the whole of $\Rd$ by assuming
$\nu(\{0\})=0$. It is known that every L\'evy measure is $\sigma$-finite.\\
In the following lemma we summarize some properties of continuous negative-definite
functions from the literature.
\begin{lemma} Let $q$ be a continuous negative-definite function. We have
\begin{itemize}
\item $q$ is continuous,
\item there exists $C >0$, such that for all $\xi \in
\mathbb{R}^{d}$: $\|q(\xi)\| \leq C(1+\|\xi\|^{2})$,
\item $q(0)=0$ if $c=0$,
\item $ \Re q(\xi)\geq 0 ~\text{for all } \xi \in \Rd$.
\end{itemize}
\end{lemma}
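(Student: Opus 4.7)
The plan is to treat the four assertions separately, since each follows from direct inspection of the L\'evy--Khinchin representation
\[
q(\xi)=c-i\scal{b}{\xi}+\tfrac{1}{2}\scal{\xi}{a\xi}-\ints{\Rd\backslash\{0\}}\bigl(e^{i\scal{\xi}{y}}-1-i\scal{\xi}{y}\Indicator_{\overline{B^{d}_{1}(0)}}(y)\bigr)\nu(dy),
\]
combined with the fundamental estimate that for every $\xi,y\in\Rd$,
\[
\bigl|e^{i\scal{\xi}{y}}-1-i\scal{\xi}{y}\Indicator_{\overline{B^{d}_{1}(0)}}(y)\bigr|\;\leq\;K\,(1\wedge\|y\|^{2})\,(1+\|\xi\|^{2}),
\]
for some constant $K>0$. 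This is the only non-trivial ingredient and follows from a Taylor expansion of the exponential: for $\|y\|\leq1$ one uses $|e^{i\scal{\xi}{y}}-1-i\scal{\xi}{y}|\leq\tfrac{1}{2}\|\xi\|^{2}\|y\|^{2}$, while for $\|y\|>1$ one uses $|e^{i\scal{\xi}{y}}-1|\leq2$.

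For the quadratic bound, I would split the integral at $\|y\|=1$ and apply the estimate above: the contribution from $\{\|y\|\leq1\}$ is bounded by $\tfrac{1}{2}\|\xi\|^{2}\int_{\|y\|\leq1}\|y\|^{2}\nu(dy)$ and the contribution from $\{\|y\|>1\}$ by $2\nu(\{\|y\|>1\})$. Both integrals are finite by the integrability condition $\int(1\wedge\|y\|^{2})\nu(dy)<\infty$. Adding the trivial bounds $|c|$, $|\scal{b}{\xi}|\leq\|b\|\|\xi\|$, $\tfrac{1}{2}|\scal{\xi}{a\xi}|\leq\tfrac{1}{2}\|a\|\|\xi\|^{2}$ for the polynomial part, and absorbing linear terms by $\|\xi\|\leq\tfrac{1}{2}(1+\|\xi\|^{2})$, yields a constant $C>0$ independent of $\xi$ with $|q(\xi)|\leq C(1+\|\xi\|^{2})$.

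Continuity is then an application of dominated convergence: for any convergent sequence $\xi_{n}\to\xi$, the integrands converge pointwise and are uniformly dominated by $K(1+\sup_{n}\|\xi_{n}\|^{2})(1\wedge\|y\|^{2})$, which is $\nu$-integrable. The polynomial prefactor is obviously continuous. Evaluating at $\xi=0$ the integrand vanishes identically and the quadratic and linear terms vanish, so $q(0)=c$, which equals $0$ precisely when $c=0$.

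Finally, for $\Re q(\xi)\geq0$ I would take real parts term by term. The constant contributes $c\geq 0$; the term $-i\scal{b}{\xi}$ is purely imaginary; the quadratic form $\tfrac{1}{2}\scal{\xi}{a\xi}$ is non-negative since $a$ is real symmetric positive semi-definite; and the real part of the integrand is $\cos(\scal{\xi}{y})-1\leq0$, so after the outer minus sign the integral contribution $\int(1-\cos\scal{\xi}{y})\,\nu(dy)$ is non-negative (and well-defined since $1-\cos\scal{\xi}{y}\leq\tfrac{1}{2}\|\xi\|^{2}\|y\|^{2}\wedge 2$). The sum of non-negative terms is non-negative. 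The only mild subtlety is ensuring that the indicator-truncation term in the integrand is purely imaginary, which it is since $-i\scal{\xi}{y}\Indicator_{\overline{B^{d}_{1}(0)}}(y)$ has zero real part, so it does not affect the computation.
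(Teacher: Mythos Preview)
Your proof is correct and self-contained. The paper's own proof simply cites Jacob's monograph (Theorem~3.7.7 and Lemma~3.6.22 in \cite{jacob1}) for the first two items, calls the third trivial, and for the fourth gives only the one-line hint $1-\cos(\scal{\xi}{y})\geq0$; your argument is exactly the kind of direct computation those references would supply, so the approaches coincide in substance, with yours being the worked-out version.
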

\begin{proof}
The first property follows from Theorem 3.7.7 in \cite{jacob1}, the second from Lemma 3.6.22, \cite{jacob1}, the third one is trivial, while property four
follows from $1-\cos(\scal{\xi}{y}) \geq 0$ for all $y,\xi \in
\Rd$.
\end{proof}
We now recall the L\'evy-Khinchine formula.
\begin{thm}[\textit{L\'evy-Khinchine formula}]
Let $(L_{t})_{t\geq0}$ be a L\'evy process taking values in $\Rd$. Then for all
$t\geq0$ the characteristic function of $L_{t}$ is of the form
\begin{equation}\label{eq:LevyKhinchin}
 \E{e^{i\scal{\xi}{L_{t}}}}=\exp(-tq(\xi)),
\end{equation}
where $q$  is a continuous negative-definite function with $c=0$. This function
uniquely determines the finite-dimensional distributions of $(L_{t})_{t\geq0}$. We call
$(b,a,\nu)$
the characteristic triple of the process.
\end{thm}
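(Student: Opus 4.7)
The plan is to leverage the defining properties of a L\'evy process --- stationary and independent increments together with stochastic continuity --- to derive a multiplicative semigroup equation for the characteristic function, and then invoke the classical representation theorem for continuous negative-definite functions, which is already cited in the preceding lemma via Jacob \cite{jacob1}.

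First I would set $\phi_t(\xi) := \E{e^{i\scal{\xi}{L_t}}}$ for $t \geq 0$ and $\xi \in \Rd$. Writing $L_{s+t} = L_s + (L_{s+t} - L_s)$ and exploiting independence and stationarity of increments gives the Cauchy-type functional equation $\phi_{s+t}(\xi) = \phi_s(\xi)\phi_t(\xi)$ with $\phi_0(\xi) = 1$. Stochastic continuity of $L$ (a standard consequence of having c\`adl\`ag paths without fixed discontinuities) yields $\phi_t(\xi) \to 1$ as $t \to 0$ for every $\xi$, which together with the semigroup property prevents $\phi_t(\xi)$ from ever vanishing. Standard arguments for measurable solutions of Cauchy's equation then produce a continuous function $q:\Rd\to\C$ with $q(0)=0$ such that $\phi_t(\xi) = \exp(-t q(\xi))$ for every $t \geq 0$ and $\xi \in \Rd$.

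Next I would verify that $q$ is continuous negative-definite with $c = 0$. For each fixed $t > 0$, $\phi_t$ is positive-definite as a characteristic function, so the map $\xi \mapsto t^{-1}(1 - \phi_t(\xi))$ is continuous negative-definite. Its pointwise limit as $t \to 0$ equals $q(\xi)$, and pointwise limits of continuous negative-definite functions are again negative-definite, so $q$ lies in this class. Since $q(0) = 0$, the constant $c$ in the canonical representation must vanish, and the integral representation of $q$ in terms of a triplet $(b,a,\nu)$ is then precisely the representation theorem for continuous negative-definite functions on $\Rd$ (Theorem 3.7.7 of \cite{jacob1}). Uniqueness of finite-dimensional distributions then follows immediately: any cylinder probability for $0 = t_0 < t_1 < \cdots < t_n$ factorises over the independent increments $L_{t_k} - L_{t_{k-1}}$, each of which is determined by $q$ via \eqref{eq:LevyKhinchin}.

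The main obstacle is the integral representation of $q$ itself. I would sidestep a direct probabilistic derivation (which would require a compound Poisson truncation argument for the large jumps combined with a central limit argument for the small-jump remainder, essentially reconstructing the L\'evy--It\^o decomposition) by appealing to the analytic representation theorem for continuous negative-definite functions as in \cite{jacob1}; the probabilistic inputs then enter only through the semigroup and stochastic continuity steps described above.
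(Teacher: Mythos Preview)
Your argument is correct, but the paper does not actually prove this theorem: its entire proof reads ``Theorem 1.2.14 and theorem 1.3.3 in \cite{Applebaum}, can be adapted to our notations.'' So the comparison is between a bare citation and your self-contained sketch. Your route --- deriving the multiplicative semigroup equation from independence and stationarity of increments, using stochastic continuity to force the exponential form $\phi_t(\xi)=\exp(-tq(\xi))$, identifying $q$ as a pointwise limit of continuous negative-definite functions, and then invoking the analytic representation theorem from \cite{jacob1} for the triplet $(b,a,\nu)$ --- is the standard one and is essentially what Applebaum does as well, so in substance the two agree. One small remark: stochastic continuity is normally part of the \emph{definition} of a L\'evy process rather than a consequence of c\`adl\`ag paths, so your parenthetical justification is slightly misplaced, but this does not affect the argument.
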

\begin{proof}
Theorem 1.2.14 and theorem 1.3.3 in \cite{Applebaum}, can be adapted to our notations.
\end{proof}
We call $q$ the \emph{characteristic exponent} of the L\'evy process.
A L\'evy process is a Feller process (see theorem 3.1.9 \cite{Applebaum}).
So every L\'evy process gives rise to a Feller semigroup. We are interested in the
structure of its generator and symbol. The following theorem provides some information.
\begin{thm}
Let $(L_{t})_{t\geq0}$ be a L\'evy process with characteristic exponent $q$ and
L\'evy triple
$(b,a,\nu)$. Let $(T_{t})_{t\geq0}$ be the associated Feller semigroup and $A$
its infinitesimal generator.
\begin{enumerate}
 \item For every $f \in \mathcal{S}(\Rd)$, $x \in \Rd$, we have
\begin{equation}
Af(x)=-\ints{\Rd}e^{i\scal{\xi}{x}}q(\xi)\hat{f}(\xi)d\xi.
\end{equation}
Hence $A$ is a pseudo-differential operator with symbol $q$.
\item For every $f \in \mathcal{S}(\Rd)$, $x \in \Rd$ we have
\begin{equation}
\begin{split}
 Af(x)= &\,
\scal{b}{\nabla
f(x)}+\frac{1}{2}\sums{1 \leq i,j \leq d}a_{i,j}\partial_{i}\partial_{j}f(x)\\
&\,
+ \ints{\Rd\backslash\{0\}}(f(x+y)-f(x)-\scal{y}{\nabla f(x)}\Indicator_{
\overline{B^{d}_{1 }(0)}}(y))\nu(dy).
\end{split}
\end{equation}
\end{enumerate}
\end{thm}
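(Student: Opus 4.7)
The plan is to compute the semigroup $T_tf(x)=\Ex{f(x+L_t)}$ explicitly by Fourier inversion, read off the generator by differentiating in $t$, and then recognize each term of the L\'evy--Khinchine exponent as a classical operator acting on $f$.

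For part (1), I start with the Fourier representation $f(y)=\int e^{i\scal{\xi}{y}}\hat f(\xi)\,d\xi$, valid for $f\in\mathcal{S}(\Rd)$. Applying this with $y=x+L_t$ and taking expectation, the stationarity and independence of the increments of $(L_t)$ together with the L\'evy--Khinchine formula give
\begin{equation*}
T_tf(x)=\int e^{i\scal{\xi}{x}}\E{e^{i\scal{\xi}{L_t}}}\hat f(\xi)\,d\xi=\int e^{i\scal{\xi}{x}}e^{-tq(\xi)}\hat f(\xi)\,d\xi,
\end{equation*}
where Fubini is justified because $\hat f\in\mathcal{S}(\Rd)$. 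Forming the difference quotient and using $|e^{-tq(\xi)}-1|/t\le |q(\xi)|$ (a consequence of $\Re q\ge 0$, applied via the representation $(e^{-tq}-1)/t=-\int_0^1 q e^{-sqt}\,ds$) together with the bound $|q(\xi)|\le C(1+\|\xi\|^2)$ from the preceding lemma, I get a dominating function $C(1+\|\xi\|^2)|\hat f(\xi)|$ which is integrable by the Schwartz property. Dominated convergence then yields, uniformly in $x$,
\begin{equation*}
\lims{t\gt 0}\frac{T_tf(x)-f(x)}{t}=-\int e^{i\scal{\xi}{x}}q(\xi)\hat f(\xi)\,d\xi,
\end{equation*}
so $f\in\Def(A)$ with $Af$ equal to the asserted pseudo-differential expression.

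For part (2), I substitute the L\'evy--Khinchine representation of $q$ into the formula from (1) and split the resulting integral into three pieces. Recalling that the Fourier convention used here gives $\partial_{x_j}f(x)=\int i\xi_j e^{i\scal{\xi}{x}}\hat f(\xi)\,d\xi$ and $\partial_i\partial_j f(x)=-\int \xi_i\xi_j e^{i\scal{\xi}{x}}\hat f(\xi)\,d\xi$, the $-i\scal{b}{\xi}$ term produces $\scal{b}{\nabla f(x)}$ and the $\tfrac12\scal{\xi}{a\xi}$ term produces $\tfrac12\sum_{i,j}a_{i,j}\partial_i\partial_j f(x)$. For the jump part, interchanging the $d\xi$ and $\nu(dy)$ integrations identifies
\begin{equation*}
\int e^{i\scal{\xi}{x}}(e^{i\scal{\xi}{y}}-1-i\scal{\xi}{y}\Indicator_{\overline{B^d_1(0)}}(y))\hat f(\xi)\,d\xi=f(x+y)-f(x)-\scal{y}{\nabla f(x)}\Indicator_{\overline{B^d_1(0)}}(y),
\end{equation*}
which after integration against $\nu$ produces the stated non-local term.

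The main obstacle is the rigorous application of Fubini in the jump part, where the integrand is only conditionally integrable in $(\xi,y)$ near $y=0$. The standard fix is to split $\nu=\nu|_{B_1}+\nu|_{B_1^c}$: on $B_1$ one exploits the second-order Taylor remainder $|e^{i\scal{\xi}{y}}-1-i\scal{\xi}{y}|\le \tfrac12\|\xi\|^2\|y\|^2$ paired with $\int_{B_1}\|y\|^2\,\nu(dy)<\infty$ and the Schwartz decay of $\hat f$, while on $B_1^c$ one uses $|e^{i\scal{\xi}{y}}-1|\le 2$ together with $\nu(B_1^c)<\infty$. This produces a joint $L^1$ majorant in $(\xi,y)$ and legitimises the order swap; after that identification of the three terms is routine.
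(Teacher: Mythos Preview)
Your argument is correct and is essentially the standard proof found in the reference the paper cites: the paper does not give its own proof here but simply refers to Theorem~3.3.3 in Applebaum, \emph{L\'evy Processes and Stochastic Calculus}. What you have written is precisely the argument carried out there---compute $T_tf$ by Fourier inversion and the L\'evy--Khinchine formula, pass to the limit $t\to 0$ under the integral using the quadratic growth bound on $q$ and the rapid decay of $\hat f$, and then identify the three terms of $q$ with the corresponding differential and integral operators. Your handling of the Fubini step in the jump part (splitting $\nu$ at the unit ball and using the Taylor estimate on the small-jump piece) is the standard justification and is fine. One minor remark: to conclude $f\in\Def(A)$ you need the limit in the $\|\cdot\|_\infty$-norm of $C_0(\Rd)$, which you correctly obtain since your dominating function is independent of $x$; it is also worth noting that the limit itself lies in $C_0(\Rd)$ by Riemann--Lebesgue, as $q\hat f\in L^1$.
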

\begin{proof}
This is included in Theorem 3.3.3, \cite{Applebaum}.
\end{proof}
Another representation of L\'evy processes is contained in the L\'evy-It\^o decomposition.
Here
we use concepts from the Appendix  \ref{sec:poissonpoint}. Let $\lambda$ denote the Lebesgue
measure on $\Rd$.
\begin{thm}\label{thm:levyito}
Every L\'evy process $(L_{t})_{t\geq0}$ with L\'evy triple $(b,a,\nu)$ can be written in
the
following way:
\begin{equation} \label{eq:levyito}
 L_{t}=bt+B^{a}_{t}+\sint{0}{t}\ints{\|x\|\geq 1}
x\xi(ds,dx)+\sint{0}{t}\ints{\|x\|<
1} x\tilde{\xi}(ds,dx),t \geq 0,
\end{equation}
where $(B^{a}_{t})_{t\geq0}$ is a $d$-dimensional Brownian motion with covariance
matrix $a$, $\xi$ is an independent Poisson random measure on $[0,\infty) \times \Rd$ with
intensity measure $\lambda \otimes \nu$ and $\tilde{\xi}$ is the corresponding
compensated Poisson random measure. Conversely let a $d$-dimensional Brownian
motion $(B^{a}_{t})_{t\geq0}$ with covariance matrix $a$, an independent Poisson random
measure $\xi$ on $[0,\infty) \times \Rd$ with
intensity measure $\lambda \otimes \nu$ and corresponding
compensated Poisson random measure $\tilde{\xi}$ be given. Then \eqref{eq:levyito} defines a L\'evy
process with L\'evy triple $(b,a,\nu)$.
\end{thm}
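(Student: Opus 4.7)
The plan is to prove both directions of the decomposition by exploiting the Poisson character of the jumps of a Lévy process, and, conversely, by verifying the Lévy--Khinchine characterization of the process constructed from the independent ingredients.

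For the forward direction, I would start by associating to a given Lévy process $(L_t)_{t\geq 0}$ its jump counting measure
\begin{equation*}
\xi(B) := \#\{s \geq 0 : (s, \Delta L_s) \in B\}, \qquad B \in \mathcal{B}([0,\infty) \times (\mathbb{R}^d\setminus\{0\})),
\end{equation*}
where $\Delta L_s = L_s - L_{s-}$. The key first step is to show that $\xi$ is a Poisson random measure on $[0,\infty) \times \mathbb{R}^d$ with intensity $\lambda \otimes \nu$. This follows from stationarity and independence of increments of $L$ together with standard arguments giving the independence and Poisson distribution of $\xi$ on disjoint sets bounded away from the time axis and from $0 \in \mathbb{R}^d$; the intensity is identified as $\lambda \otimes \nu$ by computing the characteristic function of the compound Poisson process of large jumps.

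Next I split $L_t$ into the large jump part $J^1_t := \int_0^t \int_{\|x\|\geq 1} x\, \xi(ds,dx)$, the compensated small jump part $J^2_t := \int_0^t \int_{\|x\|<1} x\, \tilde{\xi}(ds,dx)$, and the remainder $C_t := L_t - J^1_t - J^2_t - bt$, for $b$ read off from the Lévy--Khinchine exponent. The large-jump part is a compound Poisson process; the small-jump part is a well-defined $L^2$-martingale since $\int_{\|x\|<1}\|x\|^2\nu(dx) < \infty$, and an $L^2$-approximation by truncating jumps of size at least $\varepsilon$ (and passing $\varepsilon \to 0$) gives its construction and continuity-in-probability. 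The hard part is to show that $C_t$ is a continuous Gaussian process independent of $\xi$ and hence equal in law to a Brownian motion $B^a_t$ with covariance $a$. I would argue this by first noting that $C_t$ has no jumps by construction, so the continuity of paths follows, and then use independence of disjoint regions of $\xi$ together with the Lévy--Khinchine formula \eqref{eq:LevyKhinchin} applied to $L_t$ to compute $\mathbb{E}(\exp(i\scal{\xi}{C_t}))$. Matching this with the Gaussian exponent $\tfrac{1}{2}\scal{\xi}{a\xi}$ and using the independence of the jump measures on disjoint Borel sets, one obtains the independence of $C_t$ and the Poisson random measure $\xi$. A limiting/approximation argument then yields joint independence at the process level.

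For the converse direction, I would verify that the process defined by \eqref{eq:levyito} has càdlàg paths, independent stationary increments, and is continuous in probability, using standard properties of Brownian motion and Poisson random integrals (see the appendix referenced for Poisson point processes). I would then compute its characteristic function directly: the drift contributes $-i\scal{b}{\xi}t$, the Brownian part contributes $-\tfrac{t}{2}\scal{\xi}{a\xi}$, and the large/small jump integrals contribute, by independence and the exponential formula for Poisson integrals,
\begin{equation*}
-t\int_{\|x\|\geq 1}(e^{i\scal{\xi}{x}} - 1)\nu(dx) - t\int_{\|x\|<1}\bigl(e^{i\scal{\xi}{x}} - 1 - i\scal{\xi}{x}\bigr)\nu(dx).
\end{equation*}
Summing these yields exactly $-tq(\xi)$ for the continuous negative-definite function with triple $(b,a,\nu)$, which by Theorem \ref{thm:existstochastiprocesses} and the Lévy--Khinchine formula identifies the law as that of a Lévy process with the prescribed characteristics.

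The main obstacle is the first direction and specifically the identification of the continuous component as an honest Brownian motion independent of $\xi$; all the other ingredients are formal manipulations once the Poisson-random-measure structure of the jumps and the $L^2$-convergence of the compensated small-jump integral are in place. I would cite Applebaum's monograph \cite{Applebaum} (Theorem 2.4.16 and surrounding material) for the detailed execution of these steps, since the argument is standard and somewhat lengthy.
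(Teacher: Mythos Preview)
Your proposal is correct and follows the standard route to the L\'evy--It\^o decomposition; the paper itself does not give an argument but simply cites Theorem 2.4.16 in \cite{Applebaum}, which is exactly the reference you invoke at the end. In that sense your sketch is more detailed than what the paper provides, but it is the same proof.
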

\begin{proof}
This is Theorem 2.4.16 in \cite{Applebaum}.
\end{proof}
For $0<\alpha \leq 2 $ and $C>0$ we define the (one-dimensional) symmetric
$\alpha$-stable process as the L\'evy process with characteristic exponent
$C|\xi|^{\alpha}$.
Its generator can be written for an $h>0$ as
\begin{equation}
  A^{\alpha}f(x):=\ints{\Reals\backslash \{0
\}}(f(x+y)-f(x)-y\partial_xf(x)\Indicator_{[-1,1]\backslash\{0\}}(y))\frac{h}{|y|^{
1+\alpha}}dy, x \in \Reals,
\end{equation}
for $f \in C^{2}_{c}(\Reals)\subset \Def(A^{\alpha})$.
$C$ is then equal to
\begin{equation}
 C=\ints{\Reals\backslash\{ 0\}}(1-\cos(y))\frac{h}{|y|^{1+\alpha}}dy,
\end{equation}
and one can choose $h$, such that $C=1$.
In this case $q(\xi)=|\xi|^{\alpha}$, $\xi \in \Reals$.
\subsubsection{Martingale problems}
For a right-continuous Markov process $((X_t)_{t\geq0},\mathbb{P})$ with generator $A$
and $f\in \Def(A)$ the process
\begin{equation}
 \left(f(X_t)-f(X_{0})-\sint{0}{t}Af(X_s)ds\right)_{t\geq0}
\end{equation}
is a $\mathbb{P}$-martingale with respect to the canonical filtration of
 $(X_t)_{t\geq0}$ (see proposition 1.7, chapter 4,\cite{EthierKurtz}).
A question often posed in the literature is the following: can one find a
stochastic process, such that the above condition for a certain operator $(A,\Def(A))$ is
satisfied for all $f \in \Def(A)$?
This is formalized in the statement of the \emph{martingale problem}.
\begin{definition}
Let $(A,\Def(A))$ be an operator on $\BoundedMeasurable$ and $\nu \in
\ProbsOnRd$. We say that a stochastic process $(X_t)_{t\geq0}$ with
right-continuous paths is a solution of the martingale problem associated with $A$ and the probability measure $\mathbb{P}$, if
for all $f \in \Def(A)$
\begin{equation}\label{eq:mgproblem}
\left(f(X_t)-f(X_{0})-\sint{0}{t}Af(X_s)ds \right)_{t\geq0}
\end{equation}
is a $\mathbb{P}$-martingale with respect to the canonical filtration of
$(X_t)_{t\geq0}$ and $X_0$ has distribution $\nu$.
\end{definition}
If $(X_t)_{t \geq0}$ is the canonical process we will also say that $\mathbb{P}$ solves
the martingale problem.
We say that the process $(X_t)_{t\geq0}$ solves the
$\SkoSpace$-martingale problem with respect to a probability measure $\mathbb{P}$, if the paths of $(X_t)_{t\geq0}$ are in $\SkoSpace$.
The martingale problem is \emph{well-posed} in $\SkoSpace$, if for every initial
distribution there
exists a solution, which is unique in its finite-dimensional distributions.
In the case of well-posedness one can see that the solutions with respect to the initial
distributions $\delta_{x}$, $x \in \Rd,$ constitute a universal Markov process.\\

\subsection{Stable-like processes}\label{sec:stablelike}
Stable-like processes extend the class of symmetric $\alpha$-stable processes to space-dependent stability index functions.
The idea is to consider processes associated with the symbol
\begin{equation}
  q(x,\xi)=|\xi|^{\alpha(x)}, x \in \Reals, \xi \in \Reals.
\end{equation}
The generators of such processes can be expressed by
\begin{equation}
 A^{\alpha}f(x):=\ints{\Reals\backslash \{0
\}}(f(x+y)-f(x)-y\partial_xf(x)\Indicator_{[-1,1]\backslash\{0\}}(y))\frac{h(x)}{|y|^{
1+\alpha(x)}}dy, \quad x\in\Reals,
\end{equation}
with $\Def(A^{\alpha})=C^{2}_{c}(\Reals)$
and $\alpha\!:\!\Reals\gt (0,2)$ measurable.
It is possible to choose $h$ such that
\begin{equation}
 1=\ints{\Reals\backslash\{ 0\}}(1-\cos{y})\frac{h(x)}{|y|^{1+\alpha(x)}}dy.
\end{equation}
We see that  $q^{\alpha}$ is the symbol of $A^{\alpha}$. Existence and uniqueness of the
solution of a martingale problem associated with $A^{\alpha}$ have been treated in
\cite{BassJumpProcesses}.
\begin{thm}\label{thm:BassExistenceResult}
For a coefficient function $\alpha \in \bar{C}^{1}(\Reals)$ which satisfies
\begin{equation}
0 < \infs{x \in \Reals} \alpha(x) \leq \sups{x \in \Reals} \alpha(x)<
2,
\end{equation}
the $D_{\Reals}([0,\infty))$-martingale problem associated with $A^{\alpha}$
is well-posed and $\mathcal{R}(A^{\alpha}) \subset \bar{C}(\Reals)$.
\end{thm}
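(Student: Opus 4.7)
The plan is to handle the three assertions of the theorem separately: the range property $\mathcal{R}(A^\alpha)\subset \bar{C}(\Reals)$, existence of a solution, and uniqueness in the sense of finite-dimensional distributions.

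For the range property, given $f\in C_c^2(\Reals)$ I would split the integral defining $A^\alpha f(x)$ at $|y|=1$. For $|y|<1$ a second-order Taylor expansion gives $|f(x+y)-f(x)-y\partial_x f(x)|\leq \tfrac{1}{2}\|f''\|_\infty y^2$, and integrating against $h(x)/|y|^{1+\alpha(x)}$ produces a bound of order $h(x)/(2-\alpha(x))$, uniformly finite since $\sup\alpha<2$ and $h$ is continuous in the parameter $\alpha(x)$ (note that the normalisation $\int(1-\cos y)h(x)|y|^{-1-\alpha(x)}dy=1$ forces $h$ to depend continuously on $\alpha(x)$ away from $\alpha=2$). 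For $|y|\geq 1$ the contribution is controlled by $2\|f\|_\infty h(x)/\alpha(x)$, uniformly finite since $\inf\alpha>0$. Continuity in $x$ then follows from dominated convergence using the $C^1$-smoothness of $\alpha$ (and consequently of $h$), with the dominating function built from $\|f''\|_\infty$ near zero and $\|f\|_\infty$ away from zero.

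For existence I would approximate $\alpha$ by a sequence $\alpha_n$ of smooth functions with $\inf_n \inf\alpha_n>0$ and $\sup_n \sup\alpha_n<2$. Each approximating operator has a symbol $q_n(x,\xi)=|\xi|^{\alpha_n(x)}$ to which the constructive machinery for stable-like Feller processes applies, yielding candidate solutions $\mathbb{P}_n$ of the associated martingale problem. Uniform symbol bounds of the form $|q_n(x,\xi)|\leq C(1+|\xi|^2)$ and $\inf_x \Re q_n(x,\xi) \geq c|\xi|^{\alpha_-}$, together with Proposition~\ref{prop:exittime}, give \eqref{RK1}. Condition \eqref{RK2}--\eqref{RK3} follows by choosing $\gamma_n(\delta)$ proportional to $\delta$ times the square-maximal-function bound $\sup_{|\xi|\leq 1/\sqrt{\delta}} \|q_n(\cdot,\xi)\|_\infty$, which behaves like $\delta^{1-\alpha_+/2}$. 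By Proposition~\ref{prop:relcomp1} the family is relatively compact, and for any limit point $\mathbb{P}$ the identification of the martingale property for $f\in C_c^2(\Reals)$ passes to the limit because $A^{\alpha_n}f\to A^\alpha f$ uniformly (continuity of the map $\alpha\mapsto A^\alpha f$ pointwise, upgraded via the range estimate).

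The principal obstacle lies in uniqueness, which is where the full strength of the hypothesis $\alpha\in\bar{C}^1(\Reals)$ is used. I would follow the perturbation approach of Stroock--Varadhan, adapted to non-local operators. For a fixed $\lambda>0$ and $g\in C_c^\infty(\Reals)$, uniqueness of the finite-dimensional laws is equivalent to showing that the resolvent integrals $\EI{\mathbb{P}}{\int_0^\infty e^{-\lambda t}g(X_t)dt}$ are determined for all solutions $\mathbb{P}$. This in turn reduces to solvability of $(\lambda - A^\alpha)u=g$ in a subclass of $\Def(A^\alpha)$. Freezing coefficients at a point $x_0$, write $A^\alpha = A^{\alpha(x_0)} + B_{x_0}$, where $A^{\alpha(x_0)}$ generates a symmetric $\alpha(x_0)$-stable process whose resolvent $R_\lambda^{x_0}$ enjoys precise mapping properties on Hölder scales. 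One then solves the equation by a Picard iteration $u = R_\lambda^{x_0}g + R_\lambda^{x_0} B_{x_0} u$, showing that the perturbation $B_{x_0}$ is small relative to $R_\lambda^{x_0}$ in a Hölder norm of order slightly below $\alpha(x_0)$; the Lipschitz regularity of $\alpha$ makes the coefficient of this perturbation vanish at $x_0$, giving the required contraction after localisation and a patching argument. The hard step is establishing the requisite Schauder-type estimates for $R_\lambda^{x_0}$ on non-local scales, which is where the references to \cite{BassJumpProcesses} carry the real weight; this is what forces the strong regularity assumption $\alpha\in \bar{C}^1(\Reals)$ and is also why the later sections of the present paper must develop their own approximation-based existence theory when these classical Hölder hypotheses on $\alpha$ fail.
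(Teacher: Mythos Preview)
The paper does not prove this theorem at all: its entire proof consists of citations to Corollary~2.3 and Proposition~6.2 of \cite{BassJumpProcesses}. Your proposal therefore goes well beyond what the paper does, attempting to sketch the substance of Bass's argument rather than merely invoking it.

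As a sketch of Bass's method, your outline is broadly accurate for the range property and for uniqueness: the splitting at $|y|=1$ with Taylor control near zero is the standard route to $\mathcal{R}(A^\alpha)\subset\bar{C}(\Reals)$, and the localisation/perturbation scheme around a frozen-coefficient stable generator, with Schauder-type resolvent bounds, is indeed the heart of Bass's uniqueness proof (and, as you correctly note, the place where the $\bar{C}^1$ hypothesis is genuinely used).

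Your existence argument, however, has a circularity problem. You propose to approximate $\alpha$ by smooth $\alpha_n$ and appeal to ``the constructive machinery for stable-like Feller processes'' for the approximants. But in this paper that machinery \emph{is} Theorem~\ref{thm:BassExistenceResult}: it is the base case on which the later approximation results (Theorem~\ref{thm:centralresult}, Theorem~\ref{thm:stablelikeapplication}) are built. Since $\alpha$ is already assumed to lie in $\bar{C}^1$, there is nothing to gain by smoothing it further unless you identify a genuinely more elementary existence result for, say, $C^\infty$ or Lipschitz index functions that does not itself rely on Bass. Bass's own existence proof proceeds differently, via direct construction and estimates rather than via the tightness-plus-limit scheme you describe. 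If you want a self-contained sketch, either point to an independent construction for the smooth base case (e.g.\ the symbolic-calculus constructions of Hoh/Jacob or the SDE approach in \cite{SchillingSchnurrSDE}) or follow Bass's actual route.
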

\begin{proof}
This is Corollary 2.3,\cite{BassJumpProcesses} and
Proposition 6.2,\cite{BassJumpProcesses}.
\end{proof}
Theorem 3.3 of \cite{SchillingWangTransition} extends this result by stating that the
solution of the
martingale problem is a Feller process and that the closure of $A^{\alpha}$ is the
generator of that Feller process.
\subsection{Equivalence of solutions of stochastic differential equations and martingale
problems}\label{sec:EquiStoDiff}
In this section we recall results from \cite{KurtzEquivalence} adapted to our needs.
We restrict ourselves to the situation of processes in $\Reals$, but the results can be
stated in $\Rd$ too. We denote by $\lambda$ the Lebesgue measure on $\mathbb{R}$.
We will use results from the Appendix \ref{sec:poissonpoint} in the following.
Let $(B_{t})_{t\geq0}=(B^{1}_{t},\ldots, B^{m}_{t})_{t\geq0}, m\geq1,$ be an
$m$-dimensional standard Brownian motion,
$(p_{t})_{t\geq0}$ a Poisson point process on the measurable space
$(S,\mathcal{S})$ with intensity measure $\mu$ independent of $(B_{t})_{t\geq0}$.
The Poisson random measure associated with $(p_{t})_{t\geq0}$ is denoted by $\xi$.
Let $f_i \in M(\Reals),1\leq i \leq m, g \in M(\Reals)$, $h$ be a measurable
function from $\Reals \times S$ to $\Reals$. Now we consider the stochastic
differential equation
 \begin{equation} \label{eq:GeneralSDE}
\begin{split}
 X_t= X_{0}+ \ssum{i=1}{m}\sint{0}{t}f_{i}(X_s)dB^{i}_{s} +
\sint{0}{t}g(X_s)ds\\
 + \sint{0}{t}\ints{\overline{B_{1}^{d}(0)}}
h(X_{s^{-}},y)\tilde{\xi}(ds,dy)\\
 + \sint{0}{t}\ints{\overline{B_{1}^{d}(0)}^{c}}
h(X_{s^{-}},y)\xi(ds,dy).
\end{split}
\end{equation}
Under the assumption that all terms are well-defined, we say that a \emph{weak} solution
of
this stochastic differential equation is the specification of a filtered probability
space $(\Omega,\mathcal{F},(\mathcal{F}_{t})_{t\geq0},\mathbb{P})$, an
$m$-dimensional standard Brownian motion $(B_{t})_{t\geq0}$,
a Poisson point process $(p_{t})_{t\geq0}$ on $S$ with intensity measure $\mu$ and
associated Poisson random measure $\xi$, which is independent of $(B_t)_{t\geq0}$, and
a stochastic process $(X_t)_{t\geq0}$, such that all three processes
are $(\Filt_{t})$-adapted and \eqref{eq:GeneralSDE} holds
$\mathbb{P}$-almost surely.
We often call the process $(X_t)_{t\geq0}$ by itself a weak solution of the SDE.
Now we consider operators of the form
\begin{equation}\label{eq:OperatorEqOriginal}
\begin{split}
 Af(x)  = &\, \frac{1}{2}a(x)\partial_{x}^{2}f(x)+b(x)\partial_{x}f(x)\\
 + &\, \ints{\Reals}
(f(x+y)-f(x)-y\partial_{x}f(x)\Indicator_{\overline{B}_{1}(0)}(y) )\eta(x,dy),
\quad x \in \Reals,
\end{split}
\end{equation}
for $f \in C_{c}^{\infty}(\Reals)$, where $b \in M(\Reals)$, $a \in M(\Reals)$, $a
\geq0,$ and for every $x \in \Reals$ $\eta(x,\cdot)$ is a L\'evy measure. Further we assume
that there exists a complete separable metric space $(S,d)$, a $\sigma$-finite
measure $\nu$ on $\mathcal{B}(S)$, the Borel-$\sigma$-algebra of $S$, and functions
$\zeta:\Reals \times S \gt \{0,1\}$ and $\gamma:S \gt \Reals$ such that
\begin{equation}
 \eta(x,\Gamma)=\ints{S} \zeta(x,u) \Indicator_{\Gamma}(\gamma(u)) \nu(du),
\Gamma \in
\mathcal{B}(\Reals), x \in \Reals.
\end{equation}
We also assume that $S$ can be written as a disjoint union of sets $S_{1}$ and $S_{2}$
which satisfy
\begin{equation}
\Indicator_{\overline{B}_{1}(0)}(\gamma(u))=\Indicator_{S_{1}}(u)
\end{equation}
and
\begin{equation}
 \ints{S_1} \zeta(x,u)\gamma(u)^{2}du+\ints{S_2} \zeta(x,u)\nu(du)
<\infty
\end{equation}
for all $x \in \Reals$. Then we can express the operator defined in
\eqref{eq:OperatorEqOriginal} by
\begin{equation}\label{eq:OperatorEquivalence}
\begin{split}
 Af(x)  = &\,\frac{1}{2}a(x)\partial_{x}^{2}f(x)+b(x)\partial_{x}f(x)\\
 + &\,\ints{S}\zeta(x,u)
(f(x+\gamma(u))-f(x)-\gamma(u)\partial_{x}f(x)\Indicator_{S_{1}}(u))\nu(du),
 \quad x \in \Reals.
\end{split}
\end{equation}
We are interested in the type of SDE
 \begin{equation} \label{eq:SpecialSDE}
\begin{split}
 X_t= X_{0}+ \ssum{i=1}{m}\sint{0}{t}\sigma_{i}(X_s)dB^{i}_{s} +
\sint{0}{t}b(X_s)ds\\
 + \sint{0}{t}\ints{S_1}
\zeta(X_{s^{-}},u)\gamma(u)\tilde{\xi}(ds,du)\\
 + \sint{0}{t}\ints{S_2}
\zeta(X_{s^{-}},u)\gamma(u)\xi(ds,du), m \geq1,
\end{split}
\end{equation}
where $\sigma=(\sigma_{1},\ldots, \sigma_{m})$ is chosen, such that
$a=\|\sigma\|^{2}$ and the Poisson random measure $\xi$ has intensity
measure $\lambda \otimes \nu$.
The following theorem states that a solution of a $\SkoSpaceOne$-martingale problem
associated with $A$ is equivalently a weak solution of the above stochastic differential
equation.
\begin{thm}\label{thm:KurtzEquivalence}
Let $A$ be an operator of the form \eqref{eq:OperatorEquivalence} with
$\Def(A)=C^{\infty}_{c}(\Reals)$ and $\mathcal{R}(A) \subset B(\Reals)$ such that for
every compact set $K\subset \Reals$ we have
\begin{equation}
 \sups{x \in K}\left(
|a(x)|+|b(x)|+
\ints{S_1}\zeta(x,u)\gamma(u)^2\nu(du)+\ints{S_2}\zeta(x,
u){|\gamma(u)|\wedge 1}\nu(du)\right) < \infty.
\end{equation}
Then every solution of the $\SkoSpaceOne$-martingale problem is also a weak solution
of the SDE \eqref{eq:SpecialSDE}.
\end{thm}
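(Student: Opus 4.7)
The plan is to start from a solution $((X_t)_{t\geq0},\mathbb{P})$ of the $\SkoSpaceOne$-martingale problem, read off its semimartingale characteristics from the martingale condition, and then, after enlarging the probability space, exhibit an $m$-dimensional Brownian motion $(B_t)_{t\geq0}$ and a Poisson random measure $\xi$ with intensity $\lambda\otimes\nu$ for which \eqref{eq:SpecialSDE} holds.

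First I would extract the characteristics of $X$. Applying the martingale problem to $f\in C_{c}^{\infty}(\Reals)$ that coincides with $x$, respectively $x^{2}$, on successively larger balls, and localizing by first-exit times, one recovers that $X$ is a semimartingale whose predictable drift is $\sint{0}{t}b(X_s)\,ds$, whose continuous martingale part $M^{c}$ has quadratic variation $\sint{0}{t}a(X_s)\,ds$, and whose jump random measure $\mu^{X}$ on $[0,\infty)\times(\Reals\setminus\{0\})$ has predictable compensator $\eta(X_{s^{-}},dy)\,ds$. The local integrability hypothesis in the statement ensures that all these quantities are well-defined. I would then produce the Brownian motion: fixing any Borel decomposition $\sigma=(\sigma_{1},\ldots,\sigma_m)$ of $a$ with $\|\sigma\|^{2}=a$, on an enlargement of the probability space carrying an independent $m$-dimensional standard Brownian motion $(\tilde{B}_t)_{t\geq0}$, I would define $B$ by the usual projection construction, so that $\ssum{i=1}{m}\sint{0}{t}\sigma_i(X_s)\,dB^i_s = M^{c}_{t}$ and $\scal{B^i}{B^j}_t=\delta_{ij}t$; L\'evy's characterization then makes $B$ an $m$-dimensional standard Brownian motion.

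The core difficulty is to build the Poisson random measure $\xi$ on $[0,\infty)\times S$ with intensity $\lambda\otimes\nu$. The random measure $\mu^{X}$ lives on $\Reals\setminus\{0\}$ and only ``sees'' those $u\in S$ for which $\zeta(X_{s^{-}},u)=1$; moreover $\gamma$ is typically not injective, so the jump size $\Delta X_s$ does not determine the corresponding $u$. I would therefore enlarge the probability space once more to support two independent ingredients: an auxiliary randomization that, at each jump time $s$ of $X$, selects a point $u$ uniformly on the fibre $\{u\in S:\zeta(X_{s^{-}},u)=1,\,\gamma(u)=\Delta X_s\}$ via a measurable selection theorem, and an independent Poisson random measure on $[0,\infty)\times S$ with predictable intensity $(1-\zeta(X_{s^{-}},u))\,ds\otimes\nu(du)$ filling in the ``invisible'' atoms. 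Taking $\xi$ as the superposition of these two point processes, a computation of its predictable compensator shows that $\xi$ is Poisson with intensity $\lambda\otimes\nu$, and independence from $B$ is built into the construction.

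Finally, substituting $B$ and $\xi$ into the right-hand side of \eqref{eq:SpecialSDE} and splitting it into the continuous-martingale part, the drift, the compensated small-jump integral over $S_{1}$ and the large-jump sum over $S_{2}$, each term matches the corresponding component of $X$ identified in the first step, so $X$ solves \eqref{eq:SpecialSDE} on the enlarged filtered probability space. The main obstacle is the third step: the thinning-and-inversion procedure needed to construct $\xi$ with the correct intensity has to be carried out with measurable choices, and this is precisely where the structural hypothesis linking $\eta$, $\zeta$, $\gamma$, $\nu$, together with the local integrability condition displayed in the statement, is essential. This argument is the content of \cite{KurtzEquivalence}.
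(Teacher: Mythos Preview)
Your sketch is correct and aligns with the paper's approach: the paper simply cites Theorem~2.3 of \cite{KurtzEquivalence} (stated there for $\Def(A)=C^{2}_{c}(\Reals)$) and remarks that the same argument goes through when the domain is restricted to $C^{\infty}_{c}(\Reals)$, while you have outlined the content of that cited proof. The only point the paper makes explicit that you do not is this domain remark; otherwise the two proofs are the same.
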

\begin{proof}
Theorem 2.3,\cite{KurtzEquivalence}, shows the statement for
$\Def(A)=C^{2}_{c}(\Reals)$. But an inspection of the proof shows that it is valid as well
for $\Def(A)=C^{\infty}_{c}(\Reals)$.
\end{proof}
\clearpage
\section{An existence theorem for martingale problems}\label{sec:existenzsatz}
In order to deduce the existence of a solution to related martingale problems, we will
need some relative compactness of the laws of Feller processes.
For this purpose it will be instrumental to have a criterion in terms of the symbols.
\subsection{Relative compactness of sequences of distributions of Feller processes}
\begin{prop}\label{prop:weakconvergence}
Let $\sequence{\mathbb{P}^{n,\nu}}{n\geq 1}$ be a sequence of
probability measures for which the canonical process is Feller with initial distribution $\nu \in \ProbsOnRd$ and corresponding generators
$\sequence{A_{n}}{n\geq1}$. Let $\testfcts \subset \Def (A_{n}) $ for all
$n\geq 1$, and suppose that the processes possess the symbols $\sequence{q^{n}}{n\geq 1}$.
Assume that the following conditions are satisfied for a $C>0$ and for all $\xi \in \Rd$:
\begin{align}
\label{eq:tightnesscondition0}
 q^{n}(\cdot,0)=0 \quad \text{for all} ~n \geq 1, \tag{A1} \\
\label{eq:tightnessconditions1}
 \sups{n \geq 1} \| q^{n}(\cdot,\xi)\|_{\infty} \leq
C(1+\|\xi\|^{2}),\tag{A2} \\
\label{eq:tightnessconditions2}
\lims{\xi' \gt 0} \sups{n \geq 1}\| q^{n}(\cdot,\xi')\|_{\infty}=0. \tag{A3}
\end{align}
Then the sequence of distributions $\sequence{\mathbb{P}^{n,\nu}}{n\geq1}$ is relatively
compact.
\end{prop}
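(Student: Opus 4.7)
My plan is to combine Propositions \ref{prop:relcomp1} and \ref{prop:relcomp2}. Since the processes may have heavy-tailed jumps, a direct verification of \eqref{RK2} for $(X_t)_{t\geq0}$ is hopeless: second moments need not exist. Proposition \ref{prop:relcomp2} is designed precisely for this situation, as it reduces the question to (i) checking \eqref{RK1} for $(X_t)_{t\geq0}$ itself, and (ii) establishing relative compactness in $\mathcal{P}(\SkoSpaceOne)$ of $\sequence{\mathbb{P}^{n,\nu}\circ f(X)^{-1}}{n\geq1}$ for every $f\in\testfcts$, where $f(X)$ is bounded so that all its moments exist trivially.

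For (i), I would fix $\eps>0$ and choose $R>0$ with $\nu(\overline{B_R(0)}^c)<\eps$. Conditioning on $X_0=x$ with $\|x\|\leq R$, Proposition \ref{prop:exittime} applies by \eqref{eq:tightnesscondition0}--\eqref{eq:tightnessconditions1} and yields
\begin{equation*}
 \Probx{\sups{s\leq t}\|X_s-x\|\geq K'-R}\leq Ct\sups{\|\xi\|\leq 1/(K'-R)}\|q^n(\cdot,\xi)\|_\infty.
\end{equation*}
Integrating against $\nu$ and adding the tail contribution gives
\begin{equation*}
 \sups{n\geq1}\mathbb{P}^{n,\nu}\!\left(\sups{s\leq t}\|X_s\|>K'\right)\leq\eps+Ct\sups{n\geq1}\sups{\|\xi\|\leq 1/(K'-R)}\|q^n(\cdot,\xi)\|_\infty,
\end{equation*}
and \eqref{eq:tightnessconditions2} forces the right-hand side to $\eps$ as $K'\gtinf$; letting then $\eps\gt 0$ gives \eqref{RK1}.

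For (ii), I would fix $f\in\testfcts$ and apply Proposition \ref{prop:relcomp1} to the bounded real-valued process $(f(X_t))_{t\geq0}$. Its \eqref{RK1} is immediate since $|f(X_s)|\leq\|f\|_\infty$. For \eqref{RK2}--\eqref{RK3}, I exploit that $f,f^2\in\testfcts\subset\Def(A_n)$: Dynkin's formula applied separately to $f$ and $f^2$, combined with expansion of $(f(X_{t+s})-f(X_t))^2$, yields
\begin{equation*}
 \mathbb{E}^{n,\nu}\!\left[(f(X_{t+s})-f(X_t))^2\,\big|\,\Filt_t\right]=\mathbb{E}^{n,\nu}\!\left[\sint{t}{t+s}\!\bigl(A_nf^2(X_u)-2f(X_t)A_nf(X_u)\bigr)du\,\Big|\,\Filt_t\right].
\end{equation*}
Now the pseudo-differential representation $A_ng(x)=-\int e^{i\scal{x}{\xi}}q^n(x,\xi)\hat g(\xi)d\xi$ combined with \eqref{eq:tightnessconditions1} gives the bound $\|A_ng\|_\infty\leq C\int(1+\|\xi\|^2)|\hat g(\xi)|d\xi$ for every $g\in\mathcal{S}(\Rd)$, uniformly in $n$. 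Applied to $g=f$ and $g=f^2$, this bounds the integrand in absolute value by a constant $K_f$ independent of $n$, so setting $\gamma_n(\delta):=K_f\delta$ verifies \eqref{RK2} and immediately also \eqref{RK3}.

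I expect the main technical point to be the uniform-in-$n$ control of $\|A_nf\|_\infty$ and $\|A_nf^2\|_\infty$: the growth bound \eqref{eq:tightnessconditions1} is exactly calibrated so that the Fourier-analytic estimate above passes through with a constant independent of $n$, whereas \eqref{eq:tightnesscondition0} and \eqref{eq:tightnessconditions2} are used only in the application of Proposition \ref{prop:exittime} when establishing \eqref{RK1}. Everything else is routine bookkeeping.
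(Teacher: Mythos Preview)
Your proposal is correct and follows essentially the same route as the paper: verify \eqref{RK1} via Proposition~\ref{prop:exittime} (splitting off a tail of the initial law and using \eqref{eq:tightnessconditions2}), then for each $f\in\testfcts$ verify \eqref{RK2}--\eqref{RK3} for $(f(X_t))_{t\geq0}$ by expanding $(f(X_{t+s})-f(X_t))^2$, applying the martingale property to $f$ and $f^2$, and bounding $\|A_nf\|_\infty$, $\|A_nf^2\|_\infty$ uniformly in $n$ through the Fourier representation and \eqref{eq:tightnessconditions1}. The only cosmetic difference is that the paper splits at $\|X_0\|<K/2$ rather than at a fixed $R$ chosen from~$\nu$, and it makes explicit the passage from the filtration of $X$ to that of $f(X)$ (harmless here since your $\gamma_n(\delta)$ is deterministic).
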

\begin{proof}
For our proof we use proposition \ref{prop:relcomp2}.
First we show condition \eqref{RK1} with the help of proposition
\ref{prop:exittime}. In fact,
\begin{equation}
 \lims{K\gtinf}\limsups{n\gtinf} \ProbI{n,\nu}{\sups{s\leq t}\|X_s\| > K
}=0, t\geq0.\tag{RK1}
\end{equation}
This holds for  $t \geq 0$, because for $K>0$
\begin{align}
 &\, \ProbI{n,\nu}{\sups{s\leq t}\|X_s\| > K } \notag  \displaybreak[0]\\
\leq &\,  \ProbI{n,\nu}{\|X_{0}\| < K/2~;~\sups{s\leq t}
\|X_s\|>K} +
\ProbI{n,\nu}{\|X_{0}\| \geq K/2 } \notag \displaybreak[0]\\
\leq &\, \sups{\|x\| < K/2} \ProbI{n,x}{\sups{s\leq t}\|X_s\|>K} +
\ProbI{n,\nu}{\|X_{0}\| \geq K/2 } \notag\\
 \leq &\, \sups{\|x\| < K/2}\ProbI{n,x}{\sups{s\leq t}\|X_s-x\| >
(K-\|x\|)^{+}}+
\ProbI{n,\nu}{\|X_{0}\| \geq K/2 }\notag \displaybreak[0]\\
\leq &\, \sups{\|x\| < K/2}Ct\sups{\xi \leq 1/(K-\|x\|)^{+}
}\|q^{n}(\cdot,\xi)\|_{\infty}+
\ProbI{n,\nu}{\|X_{0}\| \geq K/2 } \notag\\
\leq &\, Ct\sups{\xi \leq
2/K}\|q^{n}(\cdot,\xi)\|_{\infty}+
\ProbI{n,\nu}{\|X_{0}\| \geq K/2 }.
\end{align}
Here we have used proposition \ref{prop:exittime} with $K'=(K-\|x\|)^{+}$.
Furthermore we see by condition \eqref{eq:tightnessconditions2}, that the first term of
the last line tends to zero uniformly in $n$ for $K \gtinf$. Finally, we use the fact that
the second term also tends to zero uniformly in $n$ for $K \gtinf$, since the initial distribution is common to all probability measures. So we have proved condition \eqref{RK1}.\\
To show relative compactness of the distributions
$\sequence{(\mathbb{P}^{n,\nu}\circ f(X)^{-1})}{n\geq 1}$ for a fixed $f \in
C_{c}^{\infty}(\Rd)$ we use proposition \ref{prop:relcomp1}. Condition
\eqref{RK1} is trivially satisfied by boundedness of $f$.  Now we prove
conditions \eqref{RK2} and \eqref{RK3}.\\
For $n\geq1$ we write $(\Filt_{t})_{t\geq0}$ for the canonical filtration of
$(X_t)_{t\geq0}$.
For $0<s<\delta<1$ and $n \geq 1$ we have
\begin{equation}
\begin{split}
  &\, \EI{n,\nu}{(f(X_{t+s})-f(X_t))^{2} | \Filt_{t}}\\
= &\, \EI{n,\nu}{f^{2}(X_{t+s})-f^{2}(X_t)|\Filt_{t}} \\
 &\, - 2f(X_t)\EI{n,\nu}{f(X_{t+s})-f(X_t)|\Filt_{t}}\\
= &\, \EI{n,\nu}{\sint{t}{t+s}A^{n}f^{2}(X_{u})du | \Filt_{t}}\\
 &\, - 2f(X_t)\EI{n,\nu}{\sint{t}{t+s}A^{n}f(X_{u})du|\Filt_{t}}\\
\leq &\, s \| A^{n}f^{2} \|_{\infty} + 2s\|f\|_{\infty}\|A^{n}f\|_{\infty}\\
\leq &\, \delta \left( \|A^{n}f^{2}\|_{\infty} +
2\|f\|_{\infty}\|A^{n} f \|_{\infty} \right)\\
=: &\, \gamma_{n}(\delta),
\end{split}
\end{equation}
where we have used the fact that $(\mathbb{P}^{n,\nu})$ is a solution of
the martingale problem for $A^{n}$ and $f,f^{2} \in \Def(A^{n})$. Furthermore we notice
that
$(\Filt_{t})_{t\geq0}$ is finer as a filtration than
the canonical filtration of $(f(X_t))_{t\geq0}$. Therefore the inequality holds also
for the coarser filtration. We note that $\gamma_{n}(\delta)$ is simply a deterministic
random variable. To bound $\gamma_{n}(\delta)$ we use the fact that for $f \in
\testfcts$ for all $n \geq 1, x\in \Rd$ we have
\begin{equation}\label{eq:Anupperbound}
\begin{split}
 |A^{n}f(x)| = |\frac{1}{(2\pi)^{d/2}}\int
e^{i\scal{x}{\xi}}q^{n}(x,\xi)\hat{f}(\xi)d\xi| \\
\leq\frac{1}{(2\pi)^{d/2}} \int {|q^{n}(x,\xi)|}{|\hat{f}(\xi)|}d\xi \leq
\int C'
(1+ \|\xi
\|^{2}){|\hat{f}(\xi)|}d\xi.
\end{split}
\end{equation}
This follows from hypothesis \eqref{eq:tightnessconditions1}, where $C'>0$
can be chosen independently of $x$ and $n$. The integral is finite, because
$\hat{f}$ as a Fourier transform of an element of $\testfcts$ belongs to the
Schwartz
space $\mathcal{S}(\Rd)$ and the Schwartz space is closed under multiplication by a
polynomial.
Therefore we obtain
\begin{equation}
\begin{split}
 &\, \lims{\delta \gt 0} \sups{n \geq 1} \gamma_{n}(\delta) \\
 = &\, \lims{\delta \gt 0} \sups{n \geq 1} \delta ( \|A^{n}f^{2}\|_{\infty} +2
\|f\|_{\infty} \|A^{n}f\|_{\infty}) ) \\
\leq &\, \lims{\delta \gt 0} \sups{n \geq 1} \delta (\int C' (1+ \|\xi
\|^{2})\|){|\hat{f^{2}}(\xi)|}d\xi+2 \|f\|_{\infty} \int C' (1+ \|\xi
\|^{2})\|){|\hat{f}(\xi)|}d\xi) \\
= &\, 0.
\end{split}
\end{equation}
So conditions \eqref{RK2} and \eqref{RK3} follow and the distributions
$\sequence{(\mathbb{P}^{n,\nu}\circ f(X)^{-1})}{n\geq 1}$ are relatively compact for every $f \in
\testfcts$.
By proposition \ref{prop:relcomp2} the sequence of distributions
$\sequence{(\mathbb{P}^{n,\nu})}{n\geq1}$ is relatively compact too.
We note that as a by-product of our proof we have shown the uniform boundedness of
$\{A^{n}f\}_{n\geq1}$ for each $f\in \testfcts$.
\end{proof}
\subsection{An existence theorem}

The following theorem is our central result. It provides the existence of a solution of the martingale problem in sufficient generality to be applicable in different scenarios in the
following sections.
We denote by $\lambda$ the Lebesgue measure on $\Rd$.
\begin{thm}\label{thm:centralresult}
Let $\sequence{(\mathbb{P}^{n})}{n \geq 1}$ be a sequence of probability measures for which the canonical process is Feller
with corresponding generators $(A_{n},\Def(A_{n}))$, and identical
laws of $X_0$. Let
$\testfcts \subset \Def(A_{n})$ for all $n \in \mathbb{N}$,  and write $\{q^{n}\}_{n\geq1}$
for the sequence of their symbols.
The conditions
\eqref{eq:tightnesscondition0},
\eqref{eq:tightnessconditions1}, and
\eqref{eq:tightnessconditions2}  are assumed to hold for the symbols.
Furthermore assume
\begin{equation}
\label{eq:SymbolLowerBound}
 \lims{\|\xi\|\gtinf} \infs{n \geq 1} \frac{\infs{z \in \Reals} \Re
 q^{n}(z,\xi)}{\log(1+\|\xi\|)}= \infty. \tag{A4}
\end{equation}
Let $(A,\Def(A))$ be an operator on $\BoundedMeasurable$ with
$\Def(A) \subset \Def(A_{n})$ for all $n \geq 1$.
Assume that there exists a sequence $\sequence{U_{m}}{m \geq 1}$ of open sets, such that for
all $m\geq 1$
\begin{itemize}
 \item
\begin{equation}\label{eq:propertiesU}
  \lims{m \gtinf} \lambda(U_{m})=0, \quad \overline{U_{m+1}} \subset U_{m};
\tag{B1}
\end{equation}
\item
\begin{equation}\label{eq:restricteduniformcont}
 \lims{n \gtinf}\sups{x \in \overline{B_{m}(0)} \cap (U_{m})^{c}}
|(A^{n}-A)f(x)|=0 \quad
\text{for all} ~f \in \Def(A). \tag{B2}
\end{equation}
\end{itemize}
Then the $\SkoSpace$-martingale problem associated with $A$ has a solution for every
initial distribution.
\end{thm}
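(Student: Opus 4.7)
The plan is to apply Proposition \ref{prop:weakconvergence} to extract a weakly convergent subsequence of $\sequence{\mathbb{P}^n}{n\geq 1}$ and verify that its limit solves the $\SkoSpace$-martingale problem for $A$. To obtain a solution with a prescribed initial law $\nu$, one first replaces each $\mathbb{P}^n$ by the law of the Feller process associated with $A_n$ and starting distribution $\nu$, whose existence is guaranteed by Theorem \ref{thm:existstochastiprocesses}. Hypotheses \eqref{eq:tightnesscondition0}--\eqref{eq:tightnessconditions2} match those of Proposition \ref{prop:weakconvergence}, so $\sequence{\mathbb{P}^n}{n\geq 1}$ is relatively compact, and after passing to a subsequence we have $\mathbb{P}^n\Rightarrow\mathbb{P}$, with $\mathbb{P}\circ X_0^{-1}=\nu$ since all $\mathbb{P}^n$ share this initial law.

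To verify that $f(X_t)-f(X_0)-\int_0^t Af(X_u)\,du$ is a $\mathbb{P}$-martingale for every $f\in\Def(A)$, I fix times $0\leq s_1<\cdots<s_k\leq s<t$ outside the (at most countable) set of fixed discontinuity times of $\mathbb{P}$, and $G\in\bar{C}(\Reals^{dk})$. The pre-limit identity reads
\begin{equation*}
\mathbb{E}^{\mathbb{P}^n}\!\left[\Big(f(X_t)-f(X_s)-\int_s^t A_nf(X_u)\,du\Big)G(X_{s_1},\ldots,X_{s_k})\right]=0.
\end{equation*}
Since $f\in\Def(A)\subset\Def(A_n)\subset\Czero$ is bounded and continuous, weak convergence takes care of the $f(X_t)-f(X_s)$ part. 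The substantive task is to establish
\begin{equation*}
\lim_{n\to\infty}\mathbb{E}^{\mathbb{P}^n}\!\left[\int_s^t A_nf(X_u)\,du\cdot G\right]=\mathbb{E}^{\mathbb{P}}\!\left[\int_s^t Af(X_u)\,du\cdot G\right],
\end{equation*}
which I would split into (i) replacing $A_nf$ by $Af$ inside $\mathbb{E}^{\mathbb{P}^n}$ and (ii) passing $\mathbb{P}^n\to\mathbb{P}$ with fixed integrand $Af$.

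For (i), hypothesis \eqref{eq:restricteduniformcont} delivers $A_nf\to Af$ uniformly on $\overline{B_m(0)}\cap(U_m)^c$; on the complement I use the uniform bound $\sup_n\|A_nf\|_\infty<\infty$ (the by-product of the proof of Proposition \ref{prop:weakconvergence}, combined with \eqref{eq:tightnessconditions1}) together with
\begin{equation*}
\lim_{m\to\infty}\sup_{n\geq 1}\mathbb{E}^{\mathbb{P}^n}\!\int_s^t\Indicator_{\{\|X_u\|>m\}\cup\{X_u\in U_m\}}\,du=0.
\end{equation*}
The $\{\|X_u\|>m\}$ contribution vanishes by \eqref{RK1}, which is already established inside the proof of Proposition \ref{prop:weakconvergence}. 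The $\{X_u\in U_m\}$ contribution is the delicate one and crucially uses \eqref{eq:SymbolLowerBound}: Theorem \ref{thm:TransitionDensities} produces transition densities $p^n(u,x,y)$ with $\sup_{x,y\in\Rd}p^n(u,x,y)\leq C(u)$, the bound $C(u)$ being finite for each $u>0$ and uniform in $n$ thanks to \eqref{eq:SymbolLowerBound}. Hence for any $\varepsilon>0$,
\begin{equation*}
\mathbb{E}^{\mathbb{P}^n}\!\int_0^t\Indicator_{X_u\in U_m}\,du\leq \varepsilon+\lambda(U_m)\int_\varepsilon^t C(u)\,du,
\end{equation*}
which goes to $0$ as $m\to\infty$ by \eqref{eq:propertiesU}, followed by $\varepsilon\to 0$.

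The main obstacle is part (ii), because $Af$ need not be continuous on $\Reals^d$ and weak convergence does not directly apply. I would overcome this by first transferring the density bound to the limit: outer regularity of Lebesgue measure combined with Portmanteau gives $\mathbb{P}(X_u\in G)\leq\liminf_n\mathbb{P}^n(X_u\in G)\leq C(u)\lambda(G)$ for every open $G$, hence $\mathbb{P}\circ X_u^{-1}$ also has density at most $C(u)$, and in particular $\mathbb{P}(X_u\in\bigcap_m\overline{U_m})=0$ for each $u>0$. Next, the uniform convergence in \eqref{eq:restricteduniformcont} together with the continuity of each $A_nf$ (as the image of $f$ under the generator of a Feller semigroup) shows that $Af$ restricted to the closed set $\overline{B_m(0)}\cap(U_m)^c$ is continuous, so by Tietze's extension theorem there exists $\phi_m\in\bar{C}(\Reals^d)$ with $\phi_m=Af$ on that set and $\|\phi_m\|_\infty\leq\sup_n\|A_nf\|_\infty$. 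The functional $\omega\mapsto\int_s^t\phi_m(\omega_u)\,du\cdot G(\omega_{s_1},\ldots,\omega_{s_k})$ is bounded and Skorokhod-continuous on a $\mathbb{P}$-full measure set, so weak convergence applies to it. The residual contributions from $Af-\phi_m$ are zero on $\overline{B_m(0)}\cap(U_m)^c$ and are controlled, both under $\mathbb{P}^n$ and $\mathbb{P}$, by the common density bound $C(u)$ together with $\lambda(U_m)\to 0$. A triangle-inequality argument sending first $n\to\infty$ and then $m\to\infty$ closes the martingale identity and completes the proof.
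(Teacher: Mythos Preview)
Your proof is correct and follows the same overall strategy as the paper: relative compactness via Proposition~\ref{prop:weakconvergence}, the two-step passage (replace $A_nf$ by $Af$ under $\mathbb{P}^n$, then $\mathbb{P}^n\to\mathbb{P}$), and the uniform transition-density bound derived from \eqref{eq:SymbolLowerBound} to control the occupation of $U_m$. The technical implementation of your step~(ii) differs slightly from the paper's. The paper works pointwise in time for $s\in D(X)$ and applies the continuous mapping theorem directly: it uses the nesting condition $\overline{U_{m+1}}\subset U_m$ in \eqref{eq:propertiesU} to show that $Af$ is in fact continuous on all of $\bigl(\bigcap_m U_m\bigr)^c$, and since $\mathbb{P}(X_s\in\bigcap_m U_m)=0$ no extension of $Af$ is needed. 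You instead treat $\int_s^t\phi_m(\omega_u)\,du$ as a single Skorokhod-continuous functional after a Tietze extension of $Af|_{\overline{B_m(0)}\cap U_m^c}$. Both routes are valid; the paper's is shorter because the nesting hypothesis makes the Tietze step unnecessary, while yours handles the time integral in one stroke rather than exchanging limit and integral via dominated convergence at the end. One small point to make explicit in your residual estimate: the set where $Af-\phi_m$ may be nonzero is $U_m\cup\overline{B_m(0)}^{\,c}$, so besides the density bound for the $U_m$ part you also need the \eqref{RK1}-type control of $\{\|X_u\|>m\}$ under the limit $\mathbb{P}$, which follows from Portmanteau exactly as you transfer the density bound.
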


\begin{proof}
By proposition \ref{prop:weakconvergence} and conditions
\eqref{eq:tightnesscondition0},\eqref{eq:tightnessconditions1},
\eqref{eq:tightnessconditions2}
we see that the sequence of distributions $\sequence{\mathbb{P}^{n}}{n \geq 1}$ is
relatively compact. So without loss of generality we can assume that it is weakly convergent. For every $n \geq 1$ the
process $\mathbb{P}^{n}$ solves the
$\SkoSpace$-martingale problem associated with $A_{n}$ with respect to the canonical process. We write $\mathbb{P}$ for the
weak limit of the sequence, and intend to show that $\mathbb{P}$
is a solution of the $\SkoSpace$-martingale problem associated with $A$.
This amounts to showing for all $f \in \Def(A)$ that
 \begin{equation}
\left(f(X_t)-f(X_{0})-\sint{0}{t}Af(X_s)ds \right)_{t\geq0}
 \end{equation}
is a  $\mathbb{P}$-martingale with respect to the filtration generated by
$(X_t)_{t\geq0}$. This is equivalent to the condition that
\begin{equation}\label{eq:MG-Prob}
 0=\E{(f(X_{t_{j+1}})-f(X_{t_{j}})-\sint{t_{j}}{t_{j+1}}Af(X_s)ds)\sprod{k=1}
{j}h_{k}(X_{t_{k}})}
\end{equation}
for all $j \in \mathbb{N}$, $0\leq t_{1} \leq \ldots \leq t_{j} < t_{j+1}$ and $f
\in
\Def(A)$,$h_{1},\ldots,h_{j} \in \Cb$.
We assume in the sequel that $t_{i}, 1
\leq i \leq j+1,$ belong to $D(X) :=  \{ t \geq 0 ~|~ \Prob{X_t=X_{t_{-}}}=1 \}$. We infer from lemma 7.7 and theorem 7.8 from chapter 3 \cite{EthierKurtz} that $D(X)$ is countable
and the finite dimensional distributions of $(X^n_t)_{t\geq0, t\in D(X)}$ converge weakly
to those of $(X_t)_{t\geq0, t\in D(X)}$. So by the right continuity of
\begin{equation}
 \E{(f(X_{t_{j+1}})-f(X_{t_{j}})-\sint{t_{j}}{t_{j+1}}Af(X_s)ds)\sprod{k=1}
{j}h_{k}(X_{t_{k}})}
\end{equation}
considered as a function of a single family $t_{i}, 1
\leq i \leq j+1,$ we can assume again without loss of generality that the
$t_{i}, 1
\leq i \leq j+1,$ lie in $D(X)$. First we write
\begin{equation}
\begin{split}
 &\,
\E{\left(f(X_{t_{j+1}})-f(X_{t_{j}})-\sint{t_{j}}{t_{j+1}}Af(X_s
)ds\right)\sprod{ k=1 } { j
}h_{k}(X_{t_{k}})}\\
 = &\,
\E{(f(X_{t_{j+1}})-f(X_{t_{j}}))\sprod{k=1}{j}h_{k}(X_{t_{k}})}\\
- &\, \sint{t_{j}}{t_{j+1}} \E{Af(X_s)\sprod{k=1}{j}h_{k}(X_{t_{k}})}ds,
\end{split}
\end{equation}
where we have used Fubini's theorem and the linearity of the expectation.
We now study the different terms separately, starting with the first, which we can rewrite
as
\begin{equation}
\begin{split}
& \E{(f(X_{t_{j+1}})-f(X_{t_{j}}))\sprod{k=1}{j}h_{k}(X_{t_{k}})}\\
= & \lims{n \gtinf}
\En{(f(X_{t_{j+1}})-f(X_{t_{j}}))\sprod{k=1}{j}h_{k}(X_{t_{k}})}.
\end{split}
\end{equation}
This is true because of the weak convergence of the finite-dimensional distributions.\\
Now we claim that
\begin{equation}\label{eq:discontinuity_term}
 \E{Af(X_s)\sprod{k=1}{j}h_{k}(X_{t_{k}})}=\lims{n\gtinf}
\En{Af(X_s)\sprod{k=1}{j}h_{k}(X_{t_{k}})}
\end{equation}
for all $s \geq 0, s \in D(X)$. To show this we use the continuous mapping theorem
(cf. theorem 4.27,\cite{Kallenberg}).
So we first have to study the continuity properties of $Af$.
We have
\begin{equation}\label{eq:restricteduniformcont2}
 \lims{n \gtinf}\sups{x \in \overline{B_{m}(0)} \cap
(U_{m})^{c}}|A^{n}f(x)-Af(x)| = 0
\quad \text{for all} ~m \geq 1 \tag{B2}
\end{equation}
by assumption. For all $m \geq 1$ the space $(\bar{C}(\overline{B_{m}(0)} \cap
(U_{m})^{c}),{\|\cdot\|_{\infty}})$ is a Banach space, because $\overline{B_{m}(0)}
\cap (U_{m})^{c}$ is compact. The restriction of $Af$ to $\overline{B_{m}(0)} \cap
(U_{m})^{c}$ is continuous, because according to \eqref{eq:restricteduniformcont}
it can represented as a strong limit of elements from $(\bar{C}(\overline{B_{m}(0)} \cap
(U_{m})^{c}),{\|\cdot\|_{\infty}})$. Let $U:=\cap_{m\geq1}
U_{m}$. We want to show continuity of $Af$ in every $x \in U^{c}$.
For this it is enough to show that for every $x \in U^c$ there exists an $m \geq1$, such
that $x$ is contained in an open neighborhood that is a subset of $\overline{B_{m+1}(0)}
\cap (U_{m+1})^{c}$. For this we note that by definition of $U$ there
exists for every $x \in U^{c}$ an $m\geq1$, such that $x \in
\overline{B_{m}(0)}\cap
(U_{m})^{c}$. By assumption \eqref{eq:propertiesU}
\begin{equation}
 x \in \overline{B_{m}(0)}\cap
(U_{m})^{c} \subset  B_{m+1}(0)\cap
(\overline{U_{m+1}})^{c} \subset \overline{B_{m+1}(0)}\cap
(U_{m+1})^{c}
\end{equation}
holds. From this the continuity of $Af$ for every $x \in U^{c}$ follows.\\
It remains to show that
\begin{equation}
 \Prob{X_s \in U}=0
\end{equation}
for all $s > 0, s\in D(X)$.\\
For this we use the portmanteau lemma (theorem 3.25 \cite{Kallenberg}) and theorem
\ref{thm:TransitionDensities}. We denote by $p^{n}$ the transition density
related to $\mathbb{P}^{n}$, the existence of which is guaranteed by theorem
\ref{thm:TransitionDensities}. For $s >0,
s \in D(X)$ we have
\begin{equation}\label{eq:probsopen}
\begin{split}
 &\, \Prob{X_s \in U} \leq \liminfs{n\gtinf}\Probn{X_s \in U_{m}}\\
= &\, \liminfs{n\gtinf} \ints{U_{m}} p^{n}(t,X_{0},y')dy'\\
\leq &\, \liminfs{n\gtinf} \ints{U_{m}} \sups{x,y \in
\Reals^{d}}p^{n}(t,x,y)dy'\\
\leq &\,  \liminfs{n\gtinf} \lambda(U_{m}) \frac{1}{(4\pi)^{d}} \ints{\Reals}
\exp\left(
-\frac{t}{16}\infs{z\in \Reals} \Re q^{n}(z,\xi) \right) d\xi\\
\leq &\,  \lambda(U_{m}) \frac{1}{(4\pi)^{d}} \ints{\Reals}
\exp\left(
-\frac{t}{16}\infs{n \geq 1}\infs{z\in \Reals} \Re q^{n}(z,\xi) \right) d\xi.\\
\end{split}
\end{equation}
Using assumption \eqref{eq:SymbolLowerBound} we see that the integral in the last line is
bounded. Therefore the expression in the last line converges to $0$ for $m \gtinf$, and we have $\Prob{X_s
\in U}=0$ for all $s>0, s
\in D(X)$.\\
After establishing that at a fixed time $t \in D(X)$ the process $(X_t)_{t\geq0}$ is not
in a point where $Af$ might be discontinuous, we want to show that this is enough for
the convergence of our original term in \ref{eq:discontinuity_term}.
We define $g:\Reals^{j+1}\gt
\Reals$, $g(x):=Af(x_{j+1})\sprod{k=1}{j}h_{k}(x_{k})$. In view of
the continuous mapping theorem and the fact that $g$ is bounded and continuous on $U^c$,
we obtain for $s> 0, s \in D(X)$ that
\begin{equation}
 \lims{n\gtinf}\En{g(X_s)}=\E{g(X_s)},
\end{equation}
and so
\begin{equation}
 \E{Af(X_s)\sprod{k=1}{j}h_{k}(X_{t_{k}})}=\lims{n\gtinf}
\En{Af(X_s)\sprod{k=1}{j}h_{k}(X_{t_{k}})}.
\end{equation}
To bring the approximative generators into play we decompose the right side of the
preceding equation in the following way:
\begin{equation}
\begin{split}
&\, \lims{n\gtinf}
\En{Af(X_s)\sprod{k=1}{j}h_{k}(X_{t_{k}})} \\
= &\, \lims{n\gtinf} \left(
\En{A_{n}f(X_s)\sprod{k=1}{j}h_{k}(X_{t_{k}})}
+
\En{(A-A_{n})f(X_s)\sprod{k=1}{j}h_{k}(X_{t_{k}})}\right).
\end{split}
\end{equation}
We want to obtain further properties of the second term. In fact, for
$s>0, s \in D(X)$, $n \in \mathbb{N}$ and $m\geq1$ we have
\begin{align}
&\, |\En{(A-A_{n})f(X_s)\sprod{k=1}{j}h_{k}(X_{t_{k}})}| \notag \\
\leq &\, \En{|(A-A_{n})f(X_s)\sprod{k=1}{j}h_{k}(X_{t_{k}})|} \notag\\
\leq &\,  C \cdot \En{|(A-A_{n})f(X_s)|} \notag \displaybreak[0]\\
= &\, C \cdot \En{|(A-A_{n})f(X_s)|\Indicator_{(U_{m}
\cup (\overline{B_{m}(0)})^{c})}(X_s)} \notag\\
+ &\,  C \cdot \En{|(A-A_{n})f(X_s)|\Indicator_{(U_{m})^{c} \cap
\overline{B_{m}(0)}}(X_s)}\notag \displaybreak[0]\\
\leq &\, C \| (A-A_{n})f \|_{\infty} \cdot \Big( \Probn{X_s \in U_{m}}
\Big.\notag\\
+ &\left.\, \Probn{X_s \notin
\overline{B_{m}(0)}} \right)+ C \sups{x \in (U_{m})^{c} \cap
\overline{B_{m}(0)}}|(A-A_{n})f(x)|\notag \displaybreak[0]\\
\leq &\, C \sups{n \geq 1}\| (A-A_{n})f \|_{\infty} \cdot \left( \sups{n \geq
1}\Probn{X_s \in U_{m}}+ \right.\notag\\
&\, \left. \sups{n \geq 1}\Probn{X_s \notin
\overline{B_{m}(0)}} \right)+ C \sups{x \in (U_{m})^{c} \cap
\overline{B_{m}(0)}}|(A-A_{n})f(x)|.
\end{align}
In the preceding chain of inequalities we first argue by Jensen's inequality, then
bound the product of the bounded functions $h_{k}, 1\leq k\leq j,$  by the constant
$C>0$. Then we split the expectation into a part where $X_s$ lies in $U_{m}
\cup (\overline{B_{m}(0)})^{c}$ and a part where $X_s$ lies in $(U_{m})^{c}
\cap \overline{B_{m}(0)}$. For the last line assumption
\eqref{eq:restricteduniformcont2} gives
\begin{equation}
\lims{n \gtinf} C\sups{x \in
(U_{m})^{c} \cap
\overline{B_{m}(0)}}|(A-A_{n})f(x)|=0.
\end{equation}
Finally, we look at the next-to-last line. We have
\begin{equation}
 \sups{n \geq 1}\|(A-A_{n})f\|_{\infty} \leq \|Af\|_{\infty}+\sups{n \geq
1}\|A_{n}f\|_{\infty} < \infty,
\end{equation}
since $Af$ and also $ \sups{n \geq 1} \|A_{n}f\|_{\infty}$
is bounded, as we saw in the proof of \ref{prop:weakconvergence}.
In conclusion, we see that
\begin{equation}
\begin{split}
 &\, \lims{n
\gtinf}|\En{(A-A_{n})f(X_s)\sprod{k=1}{j}h_{k}(X_{t_{k}})}|\\
\leq &\, C \sups{n \geq 1}\| (A-A_{n})f \|_{\infty} \cdot ( \sups{n \geq
1}\Probn{X_s \in U_{m}}+
\sups{n \geq 1}\Probn{X_s \notin
\overline{B_{m}(0)}} )+ \\
&\, C \lims{n \gtinf}\sups{x \in (U_{m})^{c} \cap
\overline{B_{m}(0)}}|(A-A_{n})f(x)|\\
= &\,  C \sups{n \geq 1}\| (A-A_{n})f \|_{\infty} \cdot ( \sups{n \geq
1}\Probn{X_s \in U_{m}}+
\sups{n \geq 1}\Probn{X_s \notin
\overline{B_{m}(0)}} ).
\end{split}
\end{equation}
For $m \gtinf$ we have $\sups{n \geq
1}\Probn{X_s \in U_{m}} \gt 0$ and $\sups{n \geq 1}\Probn{X_s \notin
\overline{B_{m}(0)}} \gt 0$ for almost all $s\geq0$ (up to a set of Lebesgue measure
zero).
This follows from an argument similar to the one in \eqref{eq:probsopen} and
condition \eqref{RK1}, which holds in the underlying case.
So we see that $\lims{n
\gtinf}|\En{(A-A_{n})f(X_s)\sprod{k=1}{j}h_{k}(X_{t_{k}})}|=0$.
Therefore we obtain
\begin{equation}
\lims{n\gtinf}
\En{Af(X_s)\sprod{k=1}{j}h_{k}(X_{t_{k}})}
= \lims{n\gtinf}
\En{A_{n}f(X_s)\sprod{k=1}{j}h_{k}(X_{t_{k}})}.
\end{equation}
\\
Now we summarize our results to show equation
\eqref{eq:MG-Prob}. Indeed,
\begin{align}
&\, \E{(f(X_{t_{j+1}})-f(X_{t_{j}})-\sint{t_{j}}{t_{j+1}}Af(X_s)ds)\sprod{k=1}
{j}h_{k}(X_{t_{k}})} \notag\\
= &\,\E{(f(X_{t_{j+1}})-f(X_{t_{j}}))\sprod{k=1}{j}h_{k}(X_{t_{k}})}
-\sint{t_{j}}{t_{j+1}}\E{Af(X_s)\sprod{k=1}{j}h_{k}(X_{t_{k}})ds}\displaybreak[0]
\notag\\
= &\, \lims{n\gtinf}
\En{(f(X_{t_{j+1}})-f(X_{t_{j}}))\sprod{k=1}{j}h_{k}(X_{t_{k}})} \displaybreak[0] \notag\\
- &\,
\sint{t_{j}}{t_{j+1}}\lims{n\gtinf}
\En{Af(X_s)\sprod{k=1}{j}h_{k}(X_{t_{k}})}ds\displaybreak[0] \notag\\
= &\, \lims{n\gtinf}
\En{(f(X_{t_{j+1}})-f(X_{t_{j}}))\sprod{k=1}{j}h_{k}(X_{t_{k}})}
\notag \\
&\, - \sint{t_{j}}{t_{j+1}}\lims{n\gtinf}\left(
\En{A_{n}f(X_s)\sprod{k=1}{j}h_{k}(X_{t_{k}})} \right.\notag\\
+ &\, \left. \En{(A-A_{n})f(X_s)\sprod{k=1}{j}h_{k}(X_{t_{k}})}\right)ds\notag
\displaybreak[0]\\
= &\,   \lims{n \gtinf}
\En{(f(X_{t_{j+1}})-f(X_{t_{j}}))\sprod{k=1}{j}h_{k}(X_{t_{k}})}
\notag\\
&\, - \sint{t_{j}}{t_{j+1}}\lims{n\gtinf}
\En{A_nf(X_s)\sprod{k=1}{j}h_{k}(X_{t_{k}})}ds\displaybreak[0] \notag\\
= &\,   \lims{n \gtinf}
\En{f(X_{t_{j+1}})-f(X_{t_{j}}))\sprod{k=1}{j}h_{k}(X_{t_{k}})}
\notag\\
&\, - \lims{n\gtinf}\sint{t_{j}}{t_{j+1}}
\En{A_nf(X_s)\sprod{k=1}{j}h_{k}(X_{t_{k}})}ds\displaybreak[0] \notag\\
= &\,
\lims{n\gtinf}\En{\left(f(X_{t_{j+1}})-f(X_{t_{j}})-
\sint{t_{j}}{t_{j+1}}
A_{n}f(X_s)ds\right)\sprod{k=1}{j}h_{k}(X_{t_{k}})}\displaybreak[0] \notag\\
= &\, 0.
\end{align}
Interchanging integrals, expectations and limits are justified by Fubini's theorem and
dominated convergence. So we can conclude that
equation \eqref{eq:MG-Prob} holds for all $j \in \mathbb{N}$, $0\leq t_{1} \leq \ldots
\leq t_{j} < t_{j+1}$ and $f
\in
\Def(A)$,$h_{1},\ldots,h_{j} \in \Cb$, and so $\mathbb{P}$ solves the
martingale problem associated with $A$.
\end{proof}

\begin{remark}
The solutions obtained in theorem \ref{thm:centralresult} can be shown to be quasi-left
continuous (theorem 3.12, chapter 4, \cite{EthierKurtz}), i. e. for every sequence of
bounded predictable non-decreasing stopping times $\sequence{\tau_k}{k\geq1}$ with
$\lims{k \gtinf} \tau_k=\tau$ we have $\Prob{\lims{k\gtinf}X_{\tau_k}=X_{\tau}}=1$. This
implies that $(X_{t})_{t\geq0}$ has a.s. no fixed times of discontinuity under $\mathbb{P}$, and the set
$D(X)$ of our
proof is in fact empty.
\end{remark}

\section{Applications of the existence theorem}\label{sec:anwendung}
All our applications are stated for a one-dimensional setting. Generalizations to
multidimensional scenarios should be possible. We first discuss the glueing of two L\'evy processes at a threshold where their dynamics meet in a discontinuous way.
\subsection{Glueing together of two L\'evy processes}

\subsubsection{Solutions of the martingale problem}
We consider two independent L\'evy processes
$(L^{1}_{t})_{t\geq0}$,$(L^{2}_{t})_{t\geq0}$
with symbols $q^{(1)}$,$q^{(2)}$ and generators $A^{(1)}$,$A^{(2)}$. We assume that both
symbols satisfy the Hartman-Wintner condition:
\begin{equation}\label{eq:hartmannwintner2}
  \lims{|\xi| \gtinf} \frac{\Re q^{i}(\xi)}{\log(1+|\xi|)}= \infty, i \in
\{1,2\}.
\end{equation}
Now we try to construct a stochastic process which behaves like
$(L^{1}_{t})_{t\geq0}$ below $0$ and like
$(L^{2}_{t})_{t\geq0}$ above $0$. We define $I_{1}:=(-\infty,0]$,
$I_{2}:=(0,\infty)$ and
\begin{equation}
 Af(x):=\IndOne(x)A^{(1)}f(x)+\IndTwo(x)A^{(2)}f(x), x \in \Reals
\end{equation}
with $f \in \Def(A):= C^{\infty}_{c}(\Reals)$.

\begin{thm}\label{thm:martingaleprobglueing}
The $D_{\Reals}([0,\infty))$-martingale problem associated with $A$ has a solution for
every initial distribution.
\end{thm}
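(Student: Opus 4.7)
The strategy is to apply Theorem \ref{thm:centralresult} to $A$. To do so, I will construct a sequence of smoothed, position-dependent operators $A_n$ whose symbols interpolate between $q^{(1)}$ and $q^{(2)}$ across a shrinking neighborhood of the threshold $0$, and whose associated Feller processes exist by virtue of Theorem \ref{thm:glueing}.

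Concretely, fix a smooth, non-increasing cut-off $\chi_n \in C^{\infty}(\Reals)$ with $\chi_n(x)=1$ for $x\leq -1/n$ and $\chi_n(x)=0$ for $x\geq 1/n$, bounded by $1$ and Lipschitz. Set
\begin{equation}
\psi(\eta_1,\eta_2) := q^{(1)}(\eta_1)+q^{(2)}(\eta_2),\qquad \phi_n(x):=(\chi_n(x),\,1-\chi_n(x)),
\end{equation}
so that $\psi:\Reals^2\to\C$ is continuous negative-definite with $c=0$ (as the sum of two such functions) and $\phi_n:\Reals\to\Reals^{1\times 2}$ is bounded and Lipschitz. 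Theorem \ref{thm:glueing} then provides a probability measure $\mathbb{P}^n$ under which the canonical process is Feller with generator $(A_n,\Def(A_n))\supset \testfcts$ and symbol
\begin{equation}
q^n(x,\xi)=\psi(\phi_n(x)^{T}\xi)=q^{(1)}(\chi_n(x)\xi)+q^{(2)}((1-\chi_n(x))\xi).
\end{equation}
By construction, for $n\geq m$ and $x\in U_m^c$ with $U_m:=(-1/m,1/m)$ we have $\chi_n(x)\in\{0,1\}$, so $A_n f(x)=Af(x)$ for every $f\in\testfcts$. Hence, fixing any initial distribution $\nu$ and choosing $\mathbb{P}^n$ with $\mathbb{P}^n\circ X_0^{-1}=\nu$, the sets $U_m$ trivially satisfy \eqref{eq:propertiesU} (indeed $\lambda(U_m)=2/m$ and $\overline{U_{m+1}}\subset U_m$) and \eqref{eq:restricteduniformcont} holds with the supremum equal to $0$ for $n\geq m$.

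Next I verify the symbol conditions \eqref{eq:tightnesscondition0}--\eqref{eq:SymbolLowerBound}. Condition \eqref{eq:tightnesscondition0} follows from $q^{(i)}(0)=0$. For \eqref{eq:tightnessconditions1} and \eqref{eq:tightnessconditions2}, the quadratic growth bound and the continuity at $0$ of $q^{(1)},q^{(2)}$ (from the lemma after the definition of continuous negative-definite functions) combined with $|\chi_n|,|1-\chi_n|\leq 1$ give
\begin{equation}
|q^n(x,\xi)|\leq C(1+|\chi_n(x)\xi|^2)+C(1+|(1-\chi_n(x))\xi|^2)\leq C'(1+|\xi|^2),
\end{equation}
uniformly in $n$ and $x$, and $\sup_n\|q^n(\cdot,\xi')\|_\infty\leq \sup_{|\eta|\leq|\xi'|}|q^{(1)}(\eta)|+\sup_{|\eta|\leq|\xi'|}|q^{(2)}(\eta)|\to 0$ as $\xi'\to 0$. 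The delicate point is the uniform lower bound \eqref{eq:SymbolLowerBound}: since $\Re q^{(i)}\geq 0$ and for every $x$ either $\chi_n(x)\geq 1/2$ or $1-\chi_n(x)\geq 1/2$, we have
\begin{equation}
\inf_{z\in\Reals}\Re q^n(z,\xi)\;\geq\;\min\!\Bigl(\inf_{|\eta|\geq|\xi|/2}\Re q^{(1)}(\eta),\;\inf_{|\eta|\geq|\xi|/2}\Re q^{(2)}(\eta)\Bigr),
\end{equation}
and by the Hartman--Wintner hypothesis \eqref{eq:hartmannwintner2} each of these infima, divided by $\log(1+|\xi|)$, tends to $+\infty$ as $|\xi|\to\infty$. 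Taking the infimum over $n$ changes nothing, which gives \eqref{eq:SymbolLowerBound}.

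All hypotheses of Theorem \ref{thm:centralresult} are thus verified for any fixed initial distribution $\nu$, yielding the desired solution of the $\SkoSpace$-martingale problem associated with $A$. The main technical obstacle in this argument is the uniform Hartman--Wintner bound \eqref{eq:SymbolLowerBound} on the interpolating symbols $q^n$; the observation that $\chi_n(x)$ and $1-\chi_n(x)$ cannot both be smaller than $1/2$, together with the nonnegativity of the real parts, reduces this uniform estimate to the single-process Hartman--Wintner condition available by assumption.
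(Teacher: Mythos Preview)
Your proof is correct and follows essentially the same approach as the paper: both construct approximating Feller symbols of the form $q^{(1)}(g(x)\xi)+q^{(2)}((1-g(x))\xi)$ via Theorem~\ref{thm:glueing}, take $U_m=(-1/m,1/m)$, and verify \eqref{eq:SymbolLowerBound} using that $\max\{g,1-g\}\geq 1/2$ together with the Hartman--Wintner hypothesis. The only cosmetic differences are that the paper uses a piecewise linear cut-off supported on $(0,1/n)$ rather than a smooth one on $(-1/n,1/n)$, and bounds $\Re q^{(i)}(a\xi)$ over $a\in[1/2,1]$ directly rather than passing through $\inf_{|\eta|\geq|\xi|/2}\Re q^{(i)}(\eta)$; neither change affects the argument.
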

\begin{proof}
We want to apply our general existence theorem. To this end we first have to choose an
approximating sequence of Feller processes. We define
$\psi:\Reals^{2} \gt \Reals$ by
\begin{equation}
 \psi (\xi) :=
q^{(1)}(\xi_{1})+q^{(2)}(\xi_{2}), \xi = (\xi_1, \xi_2)\in\mathbb{R}^2
\end{equation}
and $\phi_{n}: \Reals \gt \Reals^{2}$,
\begin{equation}
 \phi_{n}^{T}(x) := (g^{n}_{1}(x),
g^{n}_{2}(x)), x \in \Reals,
\end{equation}
where
\begin{equation}
g^{n}_{1}(x)=
\begin{cases}
  0 \quad ,x\geq1/n\\
  1-nx \quad, 0 < x < 1/n\\
  1 \quad, x \leq 0
\end{cases}
\end{equation}
and
$g^{n}_{2}=1-g^{n}_{1}$. For $n\in\mathbb{N}$ one observes that $\phi_{n}$ is Lipschitz-continuous and bounded, and $\psi$ is the symbol of the L\'evy process
$\bigl((L^{1}_{t},L^{2}_{t})\bigr)_{t\geq0}$, and therefore a continuous negative definite
function with $c=0$. So the two conditions for proposition \ref{thm:glueing} are fulfilled
for all $n\geq1$. Therefore there exists a sequence of probability measures $(\mathbb{P}^{n})$ for which the canonical process is Feller
with generators
$\{A_n\}_{n \geq1}$, for which $C_{c}^{\infty}(\Reals) \subset
\Def(A_{n}), n\in\mathbb{N},$ and whose associated symbols are of the form
\begin{equation}\label{eq:symbolglueing}
 q^{n}(x,\xi)=q^{(1)}(g^{n}_{1}(x)\xi)+q^{(2)}(g^{n}_{2}(x)\xi),x, \xi \in
\Reals, n \geq 1.
\end{equation}
We want to show that this sequence satisfies the conditions of
theorem \ref{thm:centralresult}. One can see that condition
\eqref{eq:tightnessconditions1} holds, because
there exist $C^{1},C^{2},C>0$, such that  for $\xi \in \Reals$
\begin{equation}
\begin{split}
 q^{n}(x,\xi)=&\, q^{(1)}(g^{n}_{1}(x)\xi)+q^{(2)}(g^{n}_{2}(x)\xi)\\
 \leq &\, C^{1}(1+ |g^{n}_{1}(x)\xi|^{2}) + C^{2}(1+ |g^{n}_{2}(x)\xi|^{2}) \leq
C
(1+|\xi|^{2}).
\end{split}
\end{equation}
Furthermore condition \eqref{eq:tightnesscondition0} holds by the properties of symbols of
L\'evy processes, and also condition \eqref{eq:tightnessconditions2} holds in view of the
continuity of $q^{(1)}$ and $q^{(2)}$. To show condition \eqref{eq:SymbolLowerBound},
we observe that
\begin{equation}
\begin{split}
  & \lims{|\xi |\gtinf} \infs{n \geq1}\frac{ \infs{x \in \Reals}\Re
q^{n}(x,\xi)}{\log(1+|\xi|)}\\
 = & \lims{|\xi |\gtinf} \infs{n \geq1}\infs{x \in \Reals}\frac{\Re
q^{(1)}(g^{n}_{1}(x)\xi)+\Re q^{(2)}(g^{n}_{2}(x)\xi)}{\log(1+|\xi|)}\\
\geq  & \lims{|\xi |\gtinf} \mins{i \in \{1,2\}} \infs{\frac{1}{2} \leq a \leq
1} \frac{\Re q^{(i)}(a\xi)}{\log(1+|\xi|)}\\
= & \lims{|\xi' |\gtinf} \mins{i \in \{1,2\}}\infs{\frac{1}{2} \leq a \leq
1}\frac{\Re
q^{(i)}(\xi')}{\log(1+\frac{1}{a}|\xi'|)}\\
\geq & \lims{|\xi' |\gtinf} \mins{i \in \{1,2\}} \infs{\frac{1}{2} \leq a \leq
1}\frac{\Re
q^{(i)}(\xi')}{\log(\frac{1}{a}(1+|\xi'|))}\\
\geq & \lims{|\xi' |\gtinf} \mins{i \in \{1,2\}}  \frac{\Re
q^{(i)}(\xi')}{\log(2)+\log(1+|\xi'|)}\\
= & ~ \infty,
\end{split}
\end{equation}
because the Hartman-Wintner condition is satisfied by assumption for the two L\'evy
processes and $1 \geq \max \{g_1^n(x),g_2^n(x)\}\geq\frac{1}{2}$ for all $x
\in
\Reals,n \in \mathbb{N}$.
This shows that the condition \eqref{eq:SymbolLowerBound} is fulfilled.
Looking at the definition of the operator $(A,\Def(A))$ above, we see that the
conditions on $\Def(A)$ are also satisfied. One observes that for an $f \in
C^{\infty}_{c}(\Reals)$ the relation $Af \in
B(\Reals)$ holds, because $A^{(1)}f \in \bar{C}(\Reals)$ and $A^{(2)}f \in
\bar{C}(\Reals)$.\\
Now it remains to show that there exists a sequence of open sets
$\{U_{m}\}_{m\geq1}$,
such that $\lims{m\gtinf}\lambda(U_{m})=0$ and $\overline{U_{m+1}}\subset U_{m}$
for all $m \geq 1$, and
\begin{equation}
  \lims{n \gtinf}\sups{x \in \overline{B_{m}(0)} \cap (U_{m})^{c}}
|(A^{n}-A)f(x)|=0, \quad
\text{for all} ~f \in \Def(A), m\geq 1.
\end{equation}
To this end for every $m \in \mathbb{N}$ we define $U_{m}:=(-1/m,1/m)$.
Obviously
$\lims{m\gtinf}\lambda(U_{m})=0$ and $\overline{U_{m+1}}\subset U_{m}$.
The form of the symbols implies that for $f \in
C^{\infty}_{c}(\Reals)$ and $n \geq m$ the equations
$A^{n}f|_{(-\infty,-1/m]}=A^{(1)}f|_{(-\infty,-1/m]}=Af|_{(-\infty,-1/m]}$ and
$A^{n}f|_{[1/m,\infty)}=A^{(2)}f|_{[1/m,\infty)}=Af|_{[1/m,\infty)}$ hold. Therefore we
have $|(A^{n}-A)f(x)|=0$ for all $ x \in (U_{n})^{c}$
and $f \in \Def(A)$, and so for all $m\geq 1$
\begin{equation}
  \lims{n \gtinf}\sups{x \in \overline{B_{m}(0)} \cap (U_{m})^{c}}
|(A^{n}-A)f(x)|=0.
\end{equation}
In summary, we have verified all the conditions enabling us to apply theorem
\ref{thm:centralresult} to $(A,\Def(A))$. Therefore the
$D_{\Reals}([0,\infty))$-martingale problem associated with $(A,\Def(A))$ has a solution.

\end{proof}

\subsubsection{Existence of solutions for associated SDE}
We now show that a solution of the martingale problem provides a weak solution of the associated SDE. For this purpose, we continue to consider two independent L\'evy processes
$(L^{1}_{t})_{t\geq0}$,$(L^{2}_{t})_{t\geq0}$. We want to show that under the conditions discussed a
solution of the martingale problem from theorem \ref{thm:martingaleprobglueing} is also a
weak solution of an associated SDE. We consider the following SDE
\begin{equation}\label{eq:LevySDE0}
X_t=X_{0}+\sint{0}{t}
\IndOne(X_{s^{-}})dL_{s}^{1}+\sint{0}{t}\IndTwo(X_{s^{-}})dL_{s}^{2}, t\geq0.
\end{equation}

A weak solution of this SDE will consist of two independent L\'evy processes
$(L^{*,1}_{t})_{t\geq0}$,$(L^{*,2}_{t})_{t\geq0}$ and a process $(X_t)_{t\geq0}$, such
that the processes are defined and adapted on a filtered probability space
$(\Omega,\Filt,(\Filt_{t})_{t\geq0},\mathbb{P})$ and such that $(L^{*,1}_{t})_{t\geq0}
\equaldist
(L^{1}_{t})_{t\geq0}$,$(L^{*,2}_{t})_{t\geq0} \equaldist (L^{2}_{t})_{t\geq0}$
and for all $t \geq0$
\begin{equation}\label{eq:LevySDE2}
X_t=X_{0}+\sint{0}{t}
\IndOne(X_{s^{-}})dL_{s}^{*,1}+\sint{0}{t}\IndTwo(X_{s^{-}})dL_{s}^{*,2}
\end{equation}
holds $\mathbb{P}$-almost surely.

\begin{thm}\label{thm:weakexistencelevy}
Let $(L^{1}_{t})_{t\geq0}$ and $(L^{2}_{t})_{t\geq0}$ be two independent L\'evy processes,
which satisfy condition \eqref{eq:hartmannwintner2}. Then the stochastic differential
equation
\begin{equation}\label{eq:LevySDE1}
X_t=X_{0}+\sint{0}{t}
\IndOne(X_{s^{-}})dL_{s}^{1}+\sint{0}{t}\IndTwo(X_{s^{-}})dL_{s}^{2}, t\geq0,
\end{equation}
possesses a weak solution for every initial distribution.
\end{thm}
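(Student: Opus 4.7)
The plan is to combine Theorem \ref{thm:martingaleprobglueing} with Theorem \ref{thm:KurtzEquivalence} and then reconstruct the driving L\'evy processes from the canonical Brownian motions and Poisson random measure that appear in the Kurtz SDE. Let $(b^{i},a^{i},\nu^{i})$ denote the characteristic triple of $L^{i}$, so that
\begin{equation*}
A^{(i)}f(x)=\tfrac{1}{2}a^{i}\partial_{x}^{2}f(x)+b^{i}\partial_{x}f(x)+\ints{\Reals}\bigl(f(x+y)-f(x)-y\partial_{x}f(x)\Indicator_{\overline{B_{1}(0)}}(y)\bigr)\nu^{i}(dy).
\end{equation*}

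The first step is to realize the glued operator $A=\IndOne A^{(1)}+\IndTwo A^{(2)}$ in the form \eqref{eq:OperatorEquivalence}. I would set $a(x):=\IndOne(x)a^{1}+\IndTwo(x)a^{2}$ and $b(x):=\IndOne(x)b^{1}+\IndTwo(x)b^{2}$, and introduce the state space $S:=\{1,2\}\times\Reals$ with $\sigma$-finite measure $\nu:=\delta_{1}\otimes\nu^{1}+\delta_{2}\otimes\nu^{2}$, together with $\zeta(x,(i,u)):=\Indicator_{I_{i}}(x)$ and $\gamma((i,u)):=u$. Decomposing $S=S_{1}\sqcup S_{2}$ according to $|u|\leq 1$ versus $|u|>1$, one checks immediately that
\begin{equation*}
\ints{S}\zeta(x,u)\Indicator_{\Gamma}(\gamma(u))\nu(du)=\IndOne(x)\nu^{1}(\Gamma)+\IndTwo(x)\nu^{2}(\Gamma),
\end{equation*}
which identifies $A$ as an operator of the form \eqref{eq:OperatorEquivalence}. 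The required local integrability bound of Theorem \ref{thm:KurtzEquivalence} reduces to the analogous property of the two L\'evy measures $\nu^{1},\nu^{2}$, and the range condition $\mathcal{R}(A)\subset B(\Reals)$ holds because $A^{(i)}f\in\bar{C}(\Reals)$ for $f\in\testfcts$.

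The second step is to apply Theorem \ref{thm:martingaleprobglueing} to produce, for the given initial distribution, a solution $\mathbb{P}$ of the $\SkoSpaceOne$-martingale problem for $A$, and then to invoke Theorem \ref{thm:KurtzEquivalence}. This yields a filtered probability space carrying a two-dimensional Brownian motion $B=(B^{1},B^{2})$, an independent Poisson random measure $\xi$ on $[0,\infty)\times S$ with intensity $\lambda\otimes\nu$, and an adapted process $(X_{t})$ satisfying an SDE of the form \eqref{eq:SpecialSDE}, with $\sigma_{i}(x):=\Indicator_{I_{i}}(x)\sqrt{a^{i}}$ so that $a=\sigma_{1}^{2}+\sigma_{2}^{2}$.

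The third step, which is the only nontrivial piece of bookkeeping, is to reassemble this SDE into the form \eqref{eq:LevySDE1}. For $i\in\{1,2\}$ I would define
\begin{equation*}
L^{*,i}_{t}:=b^{i}t+\sqrt{a^{i}}B^{i}_{t}+\sint{0}{t}\ints{\{i\}\times\{|u|<1\}}u\,\tilde{\xi}(ds,du)+\sint{0}{t}\ints{\{i\}\times\{|u|\geq 1\}}u\,\xi(ds,du).
\end{equation*}
Because the restrictions $\xi|_{\{1\}\times\Reals}$ and $\xi|_{\{2\}\times\Reals}$ are independent Poisson random measures on $[0,\infty)\times\Reals$ with intensities $\lambda\otimes\nu^{1}$ and $\lambda\otimes\nu^{2}$, also independent of $(B^{1},B^{2})$, Theorem \ref{thm:levyito} identifies $L^{*,1}$ and $L^{*,2}$ as independent L\'evy processes with triples $(b^{1},a^{1},\nu^{1})$ and $(b^{2},a^{2},\nu^{2})$, hence $L^{*,i}\equaldist L^{i}$. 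Finally, since $\zeta(x,(i,u))=\Indicator_{I_{i}}(x)$ and $\sigma_{i}(x)=\Indicator_{I_{i}}(x)\sqrt{a^{i}}$, a direct substitution into \eqref{eq:SpecialSDE} shows that the Kurtz SDE is literally
\begin{equation*}
X_{t}=X_{0}+\sint{0}{t}\IndOne(X_{s^{-}})dL^{*,1}_{s}+\sint{0}{t}\IndTwo(X_{s^{-}})dL^{*,2}_{s},
\end{equation*}
which is \eqref{eq:LevySDE2}.

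The main obstacle in this plan is bookkeeping rather than analysis: one has to verify carefully that the stochastic integrals of the $I_{i}$-indicators against the reconstructed $L^{*,i}$ reproduce exactly the drift, diffusion, small-jump and large-jump terms appearing in \eqref{eq:SpecialSDE}, and that the independence of $L^{*,1}$ and $L^{*,2}$ indeed follows from the product structure of the intensity $\nu=\delta_{1}\otimes\nu^{1}+\delta_{2}\otimes\nu^{2}$ together with the independence of the components of $B$. Once this matching is checked, the existence claim is immediate.
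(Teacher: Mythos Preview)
Your proposal is correct and follows essentially the same route as the paper: write $A$ in the form \eqref{eq:OperatorEquivalence} via the auxiliary space $S=\{1,2\}\times\Reals$ with $\nu=\delta_{1}\otimes\nu^{1}+\delta_{2}\otimes\nu^{2}$, apply Theorem \ref{thm:martingaleprobglueing} together with Theorem \ref{thm:KurtzEquivalence}, and then rebuild $L^{*,1},L^{*,2}$ from the two slices of the Poisson random measure and the two Brownian components via the L\'evy--It\^o decomposition. The only cosmetic difference from the paper is the order of the factors in $S$; be careful to keep your small/large jump cut-off consistent (you switch between $|u|\leq 1$ and $|u|<1$), since the operator uses $\Indicator_{\overline{B_{1}(0)}}$.
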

 \begin{proof}
We consider the operator $A$ from the preceding section and aim at showing that the solution of the
martingale problem associated with $A$ is also (part of) a weak solution for the
considered SDE.
We may represent $A$ on $f \in \Def(A)$ by
\begin{equation}\label{eq:OperatorEqOriginal2}
\begin{split}
 Af(x)=\frac{1}{2}a(x)\partial_{x}^{2}f(x)+b(x)\partial_{x}f(x)\\
+\ints{\Reals}
(f(x+y)-f(x)-y\partial_{x}f(x)\Indicator_{\overline{B_{1}(0)}}(y))\eta(x,dy), x
\in \Reals,
\end{split}
\end{equation}
with
\begin{equation}
 \eta(x,dy)=\IndOne(x)\nu^{1}(dy)+\IndTwo(x)\nu^{2}(dy),
\end{equation}
$a(x)=\IndOne(x)a^{1}+\IndTwo(x)a^{2}$,
and $b(x)=\IndOne(x)b^{1}+\IndTwo(x)b^{2}$.
Now we define the metric space $S:=\Reals\times\{1,2\}\subset \Reals^2$
with metric induced by $(\Reals^2,\|\cdot\|)$. We observe that every $B \in
\mathcal{B}(S)$ can be written as $B=B_1 \times \{1\} \cup B_2\times \{2\}, B_1,B_2 \in
\mathcal{B}(\Reals)$ in a unique way. The first coordinate in $S$ represents the jump
height, while the second describes the association of the jump with one of the
two L\'evy processes. Now we define $\nu$ on $S$ as $\nu(B):=\nu^{1}(B_1) +\nu^{2}(B_2), B
\in \mathcal{B}(S)$. This measure is $\sigma$-finite, because both $\nu_1$ and $\nu_2$ are
$\sigma$-finite.\\
Further we define
\begin{equation}
 \zeta(x,u):=\IndOne(x)\Indicator_{\Reals \times
\{1\}}(u)+\IndTwo(x)\Indicator_{\Reals \times \{2\}}(u),x \in \Reals, u \in S,
\end{equation}
and $\gamma(u):=u_{1}, u=(u_1,u_2)\in S$.
One can see that
\begin{equation}
 \eta(x,\Gamma)=\ints{S} \zeta(x,u) \Indicator_{\Gamma}(\gamma(u)) \nu(du),
\Gamma \in
\mathcal{B}(\Reals).
\end{equation}
We decompose $S=S_1 \cup S_2$ with $S_1:=[-1,1] \times \{1,2\}$ and
$S_2:=[-1,1]^{c} \times \{1,2\}$. One notices directly that
\begin{equation}
 \Indicator_{S_1}(u)=\Indicator_{[-1,1]}(\gamma(u))
\end{equation}
holds and
\begin{equation}
 \ints{S_1} \zeta(x,u)\gamma(u)^{2}du+\ints{S_2} \zeta(x,u)\nu(du)
<\infty
\end{equation}
holds for all $x\in \Reals$. This is due to the fact that $\nu^1$ and $\nu^2$
are L\'evy measures.\\
Further $\mathcal{R}(A)\subset B(\Reals)$ as we observed in the preceding section and for
all compact sets $K\subset \Reals$ we have
\begin{equation}
\begin{split}
  &\, \sups{x \in K}\left(
a(x)+|b(x)|+\ints{S_1}\zeta(x,u)\gamma(u)^2\nu(du)+\ints{S_2}\zeta(x,
u){|\gamma(u)|\wedge 1}\nu(du)\right)\\
\leq &\, a^{1}+a^{2}+|b^{1}|+|b^{2}|\\
+ &\, \ints{\Reals}(1 \wedge
|y|^{2})\nu^{1}(dy)+\ints{\Reals}(1 \wedge
|y|^{2})\nu^{2}(dy)< \infty.\\
\end{split}
\end{equation}
This follows from the properties of the L\'evy measures $\nu^{1},\nu^{2}$.
Therefore all conditions for the application of theorem \ref{thm:KurtzEquivalence}
are satisfied. There exists a filtered probability space $(\Omega,\Filt,
(\Filt_{t})_{t\geq0},\mathbb{P})$ on which a solution of the martingale problem
$(X_t)_{t\geq0}$, a two-dimensional standard Brownian motion
$(B_{t})_{t\geq0}=(B^{1}_{t},B^{2}_{t})_{t\geq0}$ and an independent Poisson point
process $(p_{t})_{t\geq0}$ on $S$ with intensity measure $\nu$ and associated Poisson
random measure $\xi$ can be defined, such that all three processes are
$(\Filt_t)$-adapted and the equation
\begin{equation}
\begin{split}
 X_t= X_{0}+ \sint{0}{t}\sigma_{1}(X_s)dB^{1}_{s} +
\sint{0}{t}\sigma_{2}(X_s)dB^{2}_{s} +
\sint{0}{t}b(X_s)ds\\
 + \sint{0}{t}\ints{S_1}
(\IndOne(X_{s^{-}})\Indicator_{\Reals
\times \{1\}}(u)+\IndTwo(X_{s^{-}})\Indicator_{\Reals
\times \{2\}}(u))\gamma(u)\tilde
{\xi}(ds,du)\\
 + \sint{0}{t}\ints{S_2}
(\IndOne(X_{s^{-}})\Indicator_{\Reals
\times \{1\}}(u)+\IndTwo(X_{s^{-}})\Indicator_{\Reals
\times \{2\}}(u))\gamma(u)\xi(ds,du),
\end{split}
\end{equation}
holds $\mathbb{P}$-almost surely with $\sigma_{1}(x):=\sqrt{a^{1}}\IndOne(x),x \in
\Reals$ and
$\sigma_{2}(x):=\sqrt{a^{2}}\IndTwo(x),x \in \Reals$.\\
Now we want to show that the process $(X_t)_{t\geq0}$ is also a weak solution of the SDE
\eqref{eq:LevySDE1}. To show this we define $(L^{*,1}_{t})_{t\geq0}$ and
$(L^{*,2}_{t})_{t\geq0}$ with the help of the L\'evy-It\^o decomposition in terms of $\xi$
and
$(B_{t})_{t\geq0}$. It makes sense to use the individual components of the Brownian
motion $(B_{t})_{t\geq0}$ to construct the Brownian motion parts of the L\'evy
processes. The question of how to construct the (compensated) jump parts of the
processes from $\xi$ remains.
We define
\begin{equation}
 \xi^{1}(B):=\xi(B \times \{1\}), \quad
\xi^{2}(B):=\xi(B
\times \{2\}), B \in \mathcal{B}([0,\infty)) \otimes \mathcal{B}(\Reals).
\end{equation}
One observes that the random measures thus defined are independent Poisson random measures with intensity measures $\lambda \otimes
\nu^{1},\lambda \otimes \nu^2$.\\
So we define
\begin{equation}
 L^{*,i}_{t}:=b^i t+\sqrt{a^i}B^{i}_{t}+\sint{0}{t}\ints{[-1,1]^{c}}
y\xi^{i}(ds,dy)+\sint{0}{t}\ints{[-1,1]} y\tilde{\xi}^{i}(ds,dy),t \geq 0, i\in
\{1,2\}.
\end{equation}
These are independent L\'evy processes with the triples
$(a^i,b^i,\nu^{i}), i \in \{1,2\}$.
So we see for $t \geq 0$
\begin{align}
 & X_{0}+\sint{0}{t}
\IndOne(X_{s^{-}})dL_{s}^{*,1}+\sint{0}{t}\IndTwo(X_{s^{-}})dL_{s}^{*,2}\notag\\
= &  X_{0}+ \sint{0}{t}\sqrt{a_{1}}\IndOne(X_{s^{-}})dB^{1}_{s} +
\sint{0}{t}\sqrt{a_{2}}\IndTwo(X_{s^{-}})dB^{2}_{s}\notag\\
+ &\sint{0}{t}b_{1}\IndOne(X_{s^{-}})ds+\sint{0}{t}b_{2}\IndTwo(X_{s^{-}}
)ds\notag\\
+ &  \sint{0}{t}\ints{[-1,1]}\IndOne(X_{s^{-}})y\tilde{\xi}^{1}(ds,dy)
 + \sint{0}{t}\ints{[-1,1]}
\IndTwo(X_{s^{-}})y\tilde{\xi}^{2}(ds,dy)\notag\\
+ &  \sint{0}{t}\ints{[-1,1]^{c}}\IndOne(X_{s^{-}})y
\xi^{1}(ds,dy)
 + \sint{0}{t}\ints{[-1,1]^{c}}
\IndTwo(X_{s^{-}})y\xi^{2}(ds,dy)\displaybreak[0] \notag\\
= &  X_{0}+ \sint{0}{t}\sqrt{a_{1}}\IndOne(X_{s^{-}})dB^{1}_{s} +
\sint{0}{t}\sqrt{a_{2}}\IndTwo(X_{s^{-}})dB^{2}_{s}\notag\\
+ & \sint{0}{t}b_{1}\IndOne(X_{s^{-}})ds+\sint{0}{t}b_{2}\IndTwo(X_{s^{-}}
)ds\notag\\
+ & \sint{0}{t}\ints{S_1}
\IndOne(X_{s^{-}})\Indicator_{\Reals
\times \{1\}}(u)\gamma(u)\tilde{\xi}(ds,du)
 +
\sint{0}{t}\ints{S_1}\IndTwo(X_{s^{-}})\Indicator_{\Reals \times
\{2\}}(u)\gamma(u)\tilde{\xi}(ds,du)\notag\\
+ & \sint{0}{t}
\ints{S_2}\IndOne(X_{s^{-}})\Indicator_{\Reals
\times \{1\}}(u)\gamma(u)\xi(ds,du)
 + \sint{0}{t}\ints{S_2}\IndTwo(X_{s^{-}})\Indicator_{\Reals \times
\{2\}}(u)\gamma(u)\xi(ds,du)\displaybreak[0]\\
= & X_{0}+ \sint{0}{t}\sigma_{1}(X_s)dB^{1}_{s} +
\sint{0}{t}\sigma_{2}(X_s)dB^{2}_{s} +
\sint{0}{t}b(X_s)ds\notag\\
 + &  \sint{0}{t}\ints{S_1}
(\IndOne(X_{s^{-}})\Indicator_{\Reals
\times \{1\}}(u)+\IndTwo(X_{s^{-}})\Indicator_{\Reals
\times \{2\}}(u))\gamma(u)\tilde
{\xi}(ds,du)\notag\\
 + & \sint{0}{t}\ints{S_2}
(\IndOne(X_{s^{-}})\Indicator_{\Reals
\times \{1\}}(u)+\IndTwo(X_{s^{-}})\Indicator_{\Reals
\times \{2\}}(u))\gamma(u)\xi(ds,du)\notag\\
=  &  X_t, \mathbb{P}\text{-almost surely}. \notag
\end{align}
This shows that \ref{eq:LevySDE1} has a weak solution.
\end{proof}
\begin{corollary}
 Let $(S^{1}_t)_{t\geq0}$ and $(S^{2}_t)_{t\geq0}$ be two independent
symmetric $\alpha$-stable processes. The stochastic differential equation
\begin{equation}
  X_t=X_{0}+\sint{0}{t}
\IndOne(X_{s^{-}})dS_{s}^{1}+\sint{0}{t}\IndTwo(X_{s^{-}})dS_{s}^{2}
\end{equation}
has a weak solution.
\end{corollary}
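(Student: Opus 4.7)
The plan is essentially to reduce the corollary to a direct application of Theorem \ref{thm:weakexistencelevy}. Since $(S^1_t)_{t\geq0}$ and $(S^2_t)_{t\geq0}$ are two independent L\'evy processes and the SDE in the corollary is precisely of the form \eqref{eq:LevySDE1}, the only hypothesis of the theorem that needs verification is the Hartman--Wintner condition \eqref{eq:hartmannwintner2} for each of the two characteristic exponents.

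To carry this out, I first recall from the end of Section \ref{sec:levyprocesses} that a one-dimensional symmetric $\alpha$-stable process has (up to a positive multiplicative constant absorbed in the normalization of the L\'evy measure) characteristic exponent
\begin{equation}
 q^{(i)}(\xi) = C_i |\xi|^{\alpha_i}, \qquad \xi \in \Reals,
\end{equation}
with $C_i > 0$ and $\alpha_i \in (0,2]$, for $i \in \{1,2\}$. In particular $q^{(i)}$ is real-valued, so $\Re q^{(i)}(\xi) = C_i |\xi|^{\alpha_i}$.

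Since $\alpha_i > 0$, a positive power of $|\xi|$ dominates any logarithm at infinity, giving
\begin{equation}
 \lims{|\xi| \gtinf} \frac{\Re q^{(i)}(\xi)}{\log(1+|\xi|)}
  = \lims{|\xi| \gtinf} \frac{C_i |\xi|^{\alpha_i}}{\log(1+|\xi|)} = \infty,
\end{equation}
for both $i=1$ and $i=2$. Hence \eqref{eq:hartmannwintner2} holds.

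With both Hartman--Wintner conditions verified, Theorem \ref{thm:weakexistencelevy} applies directly to the pair of L\'evy processes $(S^1_t)_{t\geq 0}$ and $(S^2_t)_{t\geq 0}$, and yields a weak solution of the stated SDE for every initial distribution. There is no genuine obstacle here: all of the real work has already been invested in Theorems \ref{thm:martingaleprobglueing} and \ref{thm:weakexistencelevy}, and this corollary is simply the observation that symmetric $\alpha$-stable symbols are a clean example to which that machinery applies without any further hypotheses.
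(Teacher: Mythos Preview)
Your proposal is correct and follows exactly the paper's approach: the paper's proof consists of the single sentence ``Both processes fulfill the condition \eqref{eq:hartmannwintner2},'' and you have simply spelled out why this is true before invoking Theorem \ref{thm:weakexistencelevy}.
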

\begin{proof}
Both processes fulfill the condition \eqref{eq:hartmannwintner2}.
\end{proof}

\subsection{Stable-like processes}\label{sec:anwendungstable}
In this subsection we apply our general existence theorem for solutions of martingale problems to stable-like processes with discontinuous stability index function. To this end we turn to the operator introduced in \ref{sec:stablelike}: \begin{equation}
 Af(x):=\ints{\Reals\backslash \{0\}}
(f(x+y)-f(x)-y\partial_xf(x)\Indicator_{[-1,1]\backslash\{0\}}(y))\frac{h(x)}{|y|^{
1+\alpha(x)}}dy, x \in \Reals,
\end{equation}
with $f \in \Def(A)=C_{c}^{\infty}(\Reals)$. We recall that $h:\Reals
\gt \Reals$ is defined via the equation
\begin{equation}
 1=\ints{\Reals\backslash\{ 0\}}(1-\cos(y))\frac{h(x)}{|y|^{1+\alpha(x)}}dy, x
\in \Reals.
\end{equation}
$A$ is a pseudo-differential operator, whose symbol can be written as
\begin{equation}
 q(x,\xi)=|\xi|^{\alpha(x)} , x,\xi \in \Reals.
\end{equation}
We will formulate an existence result for the $\SkoSpace$-martingale problem associated
with $A$ in the case of a not-necessarily continuous parameter function $\alpha$.
\begin{thm}\label{thm:stablelikeapplication}
Let $\alpha:\Reals \gt \Reals$ be a measurable function and let $D$ denote its set of
discontinuities. If its closure $\overline{D}$ is countable and
\begin{equation}\label{eq:StabilityBoundedness}
0 < \infs{x \in \Reals} \alpha(x) \leq \sups{x \in \Reals} \alpha(x)<
2,
\end{equation}
then the
$D_{\Reals}([0,\infty))$-martingale problem associated to the operator
\begin{equation}
 Af(x):=\ints{\Reals\backslash \{0\}}
(f(x+y)-f(x)-y\partial_xf(x)\Indicator_{[-1,1]\backslash\{0\}}(y))\frac{h(x)}{|y|^{
1+\alpha(x)}}dy, x \in \Reals,
\end{equation}
with $\Def(A)= C_{c}^{\infty}(\Reals)$ has a solution for every initial distribution.
\end{thm}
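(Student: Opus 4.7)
The plan is to apply Theorem~\ref{thm:centralresult} with a smoothly mollified sequence of stability indices. Let $\rho_n$ be a standard nonnegative $C^\infty$ mollifier supported in $(-1/n, 1/n)$ with $\int \rho_n = 1$, and set $\alpha_n := \rho_n * \alpha$. Convolution with a nonnegative kernel of unit mass preserves pointwise bounds, so $\inf \alpha \leq \alpha_n \leq \sup \alpha$ (keeping us inside a compact subinterval of $(0,2)$), and $\alpha_n \in C^\infty_b(\Reals) \subset \bar{C}^1(\Reals)$. Applied to each $\alpha_n$, Theorem~\ref{thm:BassExistenceResult} together with the Feller extension of Schilling--Wang (Theorem~3.3 of~\cite{SchillingWangTransition}) yields a probability measure $\mathbb{P}^n$, with a given common initial distribution, for which the canonical process is Feller; its generator agrees on $C^\infty_c(\Reals)$ with the stable-like operator $A_n$ built from $\alpha_n$, and its symbol is $q^n(x,\xi) = |\xi|^{\alpha_n(x)}$.

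The localising sets come directly from the countability hypothesis. Enumerating $\overline{D} = \{d_k\}_{k \geq 1}$, I set
\begin{equation*}
U_m := \bigcup_{k \geq 1}\bigl(d_k - 2^{-k-m},\, d_k + 2^{-k-m}\bigr),
\end{equation*}
which is open with $\lambda(U_m) \leq 2^{2-m} \to 0$. The nesting $\overline{U_{m+1}} \subset U_m$ holds because any accumulation point of $U_{m+1}$ either lies in $\bigcup_k [d_k - 2^{-k-m-1}, d_k + 2^{-k-m-1}] \subset U_m$ or is a limit of the centres, which must itself belong to $\overline{D} \subset U_m$; this verifies \eqref{eq:propertiesU}. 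The real-valuedness of $q^n$ and the bounds $0 < \underline{\alpha} := \inf \alpha \leq \sup \alpha =: \overline{\alpha} < 2$ (preserved by mollification) make the symbol conditions routine: \eqref{eq:tightnesscondition0} is trivial; \eqref{eq:tightnessconditions1} follows from $|\xi|^{\alpha_n(x)} \leq 1+|\xi|^2$; \eqref{eq:tightnessconditions2} from $\sup_x |\xi|^{\alpha_n(x)} \leq |\xi|^{\underline{\alpha}} \to 0$ as $\xi \to 0$; and \eqref{eq:SymbolLowerBound} from $\inf_n \inf_x |\xi|^{\alpha_n(x)} \geq |\xi|^{\underline{\alpha}}$ for $|\xi| \geq 1$, since $|\xi|^{\underline{\alpha}}/\log(1+|\xi|) \to \infty$.

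For condition \eqref{eq:restricteduniformcont} I would exploit that $\overline{B_m(0)} \cap U_m^c$ is compact and disjoint from $\overline{D}$, hence at positive distance $\delta_m > 0$ from it; $\alpha$ is continuous, and therefore uniformly continuous, on a $\delta_m/2$-neighbourhood, so standard mollifier estimates give $\alpha_n \to \alpha$ uniformly on $\overline{B_m(0)} \cap U_m^c$ for $1/n < \delta_m/2$. Continuity of the map $\alpha \mapsto h(\alpha)$ through its defining integral transfers this to $h_n \to h$ uniformly on the same set, and combined with the pointwise bound $|f(x+y)-f(x) - y\partial_x f(x)\Indicator_{[-1,1]}(y)| \leq C(|y|^2 \wedge 1)$ for $f \in C^\infty_c(\Reals)$ and uniform integrability of $|y|^{-1-\alpha}(|y|^2 \wedge 1)$ for $\alpha \in [\underline{\alpha}, \overline{\alpha}]$, dominated convergence yields $\|A_n f - A f\|_{L^\infty(\overline{B_m(0)} \cap U_m^c)} \to 0$. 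The main subtlety I expect is exactly this uniform control on the stable-like integral as a function of the stability index, which rests crucially on $\underline{\alpha} > 0$ and $\overline{\alpha} < 2$ being preserved by mollification, and on the uniform distance $\delta_m$ between $\overline{B_m(0)} \cap U_m^c$ and $\overline{D}$ produced by the countability construction. Once these ingredients are in hand, all hypotheses of Theorem~\ref{thm:centralresult} are met and a solution of the $\SkoSpaceOne$-martingale problem associated with $A$ follows for every initial distribution.
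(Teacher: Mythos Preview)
Your argument is correct and follows the same overall template as the paper (approximate by smooth $\alpha_n$, verify the symbol conditions, then invoke Theorem~\ref{thm:centralresult}), but the two concrete constructions you use are both simpler than the paper's. For the approximating indices, the paper does not mollify $\alpha$ directly; instead it restricts $\alpha$ to $[-m,m]\cap U_m^c$, applies the Tietze extension theorem to obtain a globally continuous $\alpha^{(m)}$, mollifies that, and then extracts a diagonal sequence. Your direct mollification $\alpha_n=\rho_n*\alpha$ works just as well here because $\alpha$ is bounded (so $\alpha_n\in\bar C^1$ with the same pointwise bounds) and, crucially, the compact set $\overline{B_m(0)}\cap U_m^c$ sits at positive distance from $\overline D$, which is exactly what gives you uniform convergence of $\alpha_n$ to $\alpha$ there; the paper's Tietze step sidesteps the need for this distance argument but at the cost of an extra layer of construction. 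For the sets $U_m$, the paper invokes the Cantor--Bendixson derived-set hierarchy of $\overline D$ to organise the points into finitely many levels of isolated points before covering them by intervals; your straightforward enumeration with geometrically shrinking radii achieves the same nesting $\overline{U_{m+1}}\subset U_m$ via the same dichotomy (either a limit point is trapped in a single closed interval, or it is an accumulation point of centres and hence lies in $\overline D\subset U_m$), and is more economical. Finally, for \eqref{eq:restricteduniformcont} the paper passes through the symbol representation and estimates $\bigl||\xi|^{\alpha_n(x)}-|\xi|^{\alpha(x)}\bigr|$ by the mean value theorem, whereas you work directly with the kernel; both reduce to a Lipschitz-type bound in $\alpha$ uniform over $[\underline\alpha,\overline\alpha]\subset(0,2)$, so the difference is cosmetic.
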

\begin{proof}
We will again show that there exists an approximating sequence of Feller processes so
that we can apply theorem \ref{thm:centralresult}.\\
First we show that for every function
$\alpha:\Reals \gt \Reals$ which satisfies the assumptions, there exists a sequence of
functions
$\sequence{\alpha^{n}}{n \geq 1}$ in $\overline{C}^{1}(\Reals)$ and a sequence of open
sets $\sequence{U_{m}}{m\geq 1}$, such that
\begin{itemize}
 \item \begin{equation}\label{eq:approxsequence0}
        \overline{U_{m+1}} \subset U_{m} \quad \text{for
all}~m\geq 1 ~,~
\lims{m \gtinf}\lambda(U_{m})=0, \tag{S1}
       \end{equation}

 \item \begin{equation}\label{eq:approxsequence1}
        0 < \infs{n \geq 1}\infs{x \in \Reals} \alpha^{n}(x) \leq \sups{n\geq 1}
\sups{x \in \Reals} \alpha^n(x)< 2, \tag{S2}
       \end{equation}
\item \begin{equation}\label{eq:approxsequence2}
        \lims{n \gtinf}\sups{x \in [-m,m] \cap (U_{m})^{c}}
|\alpha^{n}(x)-\alpha(x)|=0 \quad \text{for all}~ m \geq 1. \tag{S3}
      \end{equation}
\end{itemize}
\subsubsection*{Construction of the sets $U_{m}$}
We will use concepts from appendix \ref{sec:topologicalconcepts} in the setting of
the metric space $(\Reals,d)$ with $d(x,y):=|x - y|,x,y \in \Reals$.
We use basic knowledge about \emph{derived sets} in the topological sense to show that
$\overline{D}^{(\infty)}$ is empty.
\begin{lemma}
 $\overline{D}^{\infty}$  is empty.
\end{lemma}
\begin{proof}
Assuming the opposite, we conclude by theorem \ref{thm:CantorResultat} and the fact
that $\overline{D}^{\infty}$ is closed as an intersection of closed sets, that
$\overline{D}^{\infty}$ has to be uncountable. This is a contradiction because
$\overline{D}^{\infty} \subset \overline{D}$ and by assumption $\overline{D}$ is
countable.
\end{proof}
$\overline{D}^{\infty}$ being empty means by the hierarchy of the
$\overline{D}^{(i)}, i\in\mathbb{N}$, that there exists $k \geq 0$ such that
$\overline{D}^{(l)}=\varnothing$ for all $l \geq k$. So we deduce
\begin{equation}
 \overline{D}=\sbigcup{i=0}{k-1}\left(\overline{D}^{(i)}\setminus\overline{D}^{
(i+1) }\right)
.
\end{equation}
The construction of the sets implies that each $\overline{D}^{(i)}
\setminus \overline{D}^{(i+1)}$ consists of isolated points.
Because $\overline{D}$ is countable, we can write it as
\begin{equation}
 \overline{D}= \sbigcup{i=0}{k-1}\bigcups{1 \leq j \leq k_{i}}\{d^{i}_{j}\}
\end{equation}
with appropriately chosen $d^{i}_{j} \in \overline{D}^{(i)}
\setminus \overline{D}^{(i+1)}$ and $k_i \in \mathbb{N} \cup \{\infty\}$.
The next step is to approximate this set with an open set. For $m\in\mathbb{N}$ define
\begin{equation}
U_{m}:= \sbigcup{i=0}{k-1}\bigcups{1 \leq j \leq
k_{i}}(d^{i}_{j}-r^{m,i}_{j},d^{i}_{j}+r^{m,i}_{j}),
\end{equation}
with $r^{m,i}_{j}= \frac{1}{m2^{j+2}} \left(1 \wedge
d(d^{i}_{j},(\overline{D}^{(i)}\backslash\overline{D}^{(i+1)})\backslash\{d^{i}
_ { j }
\})\right)$ for $m\in\mathbb{N}, 0\le i\leq k-1, 1\leq  j\leq k_i$.

The sets $\overline{D}^{(i)}\setminus\overline{D}^{(i+1)}$ consist of isolated points, so
the sets $(d^{i}_{j}-r^{m,i}_{j},d^{i}_{j}+r^{m,i}_{j})$ are open for all $m\in\mathbb{N},
0\le i\leq k-1, 1\leq  j\leq k_i$. This means that $U_{m}$ is open too.
We use distributivity of the closure for finite unions of sets to state
\begin{equation}
 \overline{U_{m}}=\sbigcup{i=0}{k-1}\overline{\bigcups{1 \leq j \leq
k_{i}}(d^{i}_{j}-r^{m,i}_{j},d^{i}_{j}+r^{m,i}_{j})}.
\end{equation}
The following lemma will be applied to the sets
$\bigcups{1 \leq j \leq k_i }(d^{i}_{j}-r^{m,i}_{j},d^{i}_{j}+r^{m,i}_{j})$ of
$\overline{D}^{(i)}\setminus\overline{D}^{(i+1)}$ for every $0 \leq i \leq k-1$.
\begin{lemma}
Let $(X,d)$ be a metric space and $A \subset X $ be of the form
\begin{equation}
 A=\bigcups{i\geq1}B_{r_{i}}(x_{i}),
\end{equation}
where the $B_{r_{i}}(x_{i})$ are open disjoint balls with centers $x_{i}$ and radii
$r_{i}$, and let $\lims{i \gtinf} r_{i}=0$. Then we have
\begin{equation}
 \overline{A}=\bigcups{i \geq 1}\overline{B_{r_{i}}(x_{i})} \cup
H,
\end{equation}
where $H$ is the closure of the set $\sequence{x_{i}}{i\geq 1}$.
\end{lemma}
\begin{proof}
First we have $\bigcups{i \geq 1}\overline{B_{r_{i}}(x_{i})} \cup
H \subset  \overline{A}$, because the closure of a subset stays in the closure of the set
itself. Obviously, $A \subset \bigcups{i \geq 1}\overline{B_{r_{i}}(x_{i})}
\cup H $. Now for $y \in \overline{A}\backslash A$ we know that there exists a sequence
$\sequence{y_{n}}{n \geq 1} \subset A$ such that $\lims{n\gtinf}
d(y_{n},y)=0$. By the disjointness assumption one can distinguish two cases: either there
exists a $k \geq 1$ and an $i' \geq1$, such that $y_{n} \in B_{r_{i'}}(x_{i'})$ for all
$n\geq k$, and it follows that $y\in \overline{B_{r_{i'}}(x_{i'})}$; or for all $i \geq1$
$B_{r_{i}}(x_{i})$ contains at most a finite number of elements of the sequence
$\{y_{n}\}_{n\geq1}$. In the second case the sequence visits an infinite number of
disjoint balls. It follows by a $2\epsilon$-argument and the fact that the radii $r_{i}$
tend to zero that $y \in H$.
\end{proof}
We can apply the preceding lemma to each of the terms $\bigcups{1 \leq j
\leq k_i}(d^{i}_{j}-r^{m,i}_{j},d^{i}_{j}+r^{m,i}_{j})$ of the above union.
We know that  $\overline{D} \subset U_{m}$ and therefore the points of accumulation of
$\{d^{i}_{j}\}_{1 \leq j
\leq k_i,1\leq i \leq k-1}$ are already in
$\sbigcup{i=0}{k-1}\bigcups{1 \leq j
\leq k_i}[d^{i}_{j}-r^{m,i}_{j},d^{i}_{j}+r^{m,i}_{j}]$. Hence
\begin{equation}
 \overline{U_{m}}=\sbigcup{i=0}{k-1}\bigcups{1 \leq j
\leq k_i}[d^{i}_{j}-r^{m,i}_{j},d^{i}_{j}+r^{m,i}_{j}].
\end{equation}
We observe that $\lambda(U_1)< \infty$ and
$\lambda(U_{m})\leq\frac{1}{m}\lambda(U_1)$ holds for all $m\geq1$.
Additionally we have
$r^{m+1,i}_{j}<r^{m,i}_{j}$ for all  $m\geq 1$ and $0 \leq i \leq k-1,1  \leq j
\leq k_i$.
So one recognizes that $\{U_{m}\}_{m\geq1}$ satisfies the condition
\eqref{eq:approxsequence0}.\\
\subsubsection*{Construction of the sequence $\sequence{\alpha^{n}}{n\geq1}$}
First we fix an $m \geq1$ and consider the restriction $\alpha|_{[-m,m] \cap
U_{m}^{c}}$ of $\alpha$. By the Tietze extension theorem (p. 62, \cite{WalterAnalysis})
we can extend this restriction to a continuous function $\alpha^{(m)}$ defined on the whole of $\mathbb{R}$, while preserving its supremum and infimum.
One can also assume that $\alpha^{(m)}$ is constant on $(-\infty,-m)$ and
$(m,\infty)$.\\
Now we choose a sequence of smoothing kernels $\sequence{\phi_{k}}{k \geq1}$ as described
in the Appendix \ref{sec:convolution} and approximate $\alpha^{(m)}$ for a $k\geq1$ by the
convolution  $\alpha^{(m),k}:=(\alpha^{(m)}*\phi_{k})$.\\
The sequence $\sequence{\alpha^{(m),k}}{k\geq1}$ converges uniformly to $\alpha^{(m)}$
on the whole set $\Reals$ because of theorem \ref{thm:faltungapprox} and
\eqref{eq:localconvolution}.
Next for $\epsilon>0$ satisfying $\min
\{2-\sups{x \in \Reals}\alpha(x),\infs{x \in \Reals}\alpha(x)\} >
\epsilon$ and every $m \geq 1$ we can choose $k^{'}_{m} \geq 1$, such that
\begin{equation}
\begin{split}
\sups{x \in \Reals} \alpha^{(m),k}(x) \leq \sups{x \in \Reals}
  \alpha(x)+\epsilon<2\\
  \infs{x \in \Reals} \alpha^{(m),k}(x) \geq \infs{x \in \Reals}
\alpha(x)-\epsilon>0
\end{split}
\end{equation}
for all $k \geq k^{'}_{m}$.
Then we obtain
\begin{equation} \label{eq:S2proof}
 0 < \infs{m \geq 1} \infs{k \geq k^{'}_{m}}\infs{x \in \Reals}
\alpha^{(m),k}(x) \leq \sups{m \geq 1} \sups{k \geq k^{'}_{m}}\sups{x \in
\Reals} \alpha^{(m),k}(x)<2.
\end{equation}
All elements of $\sequence{\alpha^{(m),k}}{k\geq1}$ belong to
$\overline{C}^{1}(\Reals)$, because every element is constant on $(-\infty,-m-1/k)$ and
$(m+1/k,\infty)$ (cf. \eqref{eq:localconvolution}) and continuously
differentiable and bounded on every bounded interval. Now we want to construct the
sequence $\sequence{\alpha^{n}}{n\geq1}$ we are looking for.
For every $n \geq 1$ we choose $k_n \geq k^{'}_n$ such that
\begin{equation}\label{eq:S3proof}
 \sups{x \in [-n,n] \cap (U_{n})^{c}}
|\alpha^{(n),k_n}(x)-\alpha(x)|<1/n.
\end{equation}
We define
\begin{equation}
\alpha_{n}:=\alpha^{(n),k_{n}}, n\in \mathbb{N}.
\end{equation}
We notice that $\sequence{\alpha_{n}}{n\geq1}$ satisfies condition
\eqref{eq:approxsequence1} in consequence of \eqref{eq:S2proof} and
condition \eqref{eq:approxsequence2} because of \eqref{eq:S3proof}.
\subsubsection*{Application of the existence theorem}
We now deduce the existence of stable-like processes with discontinuous stability index function in the sense of a solution of the associated martingale problem. In fact, since the sequence of functions $\sequence{\alpha_{n}}{n\geq1}$ lies in
$\overline{C}^{1}(\Reals)$ and \eqref{eq:approxsequence1} holds, we can use theorem
\ref{thm:BassExistenceResult} and the remarks following the theorem to verify the
existence of Feller processes associated with the symbols
$q^{n}(x,\xi):=|\xi|^{\alpha^{n}(x)}, n\in
\mathbb{N},x,\xi \in \Reals.$
Let $\sequence{A_n}{n\geq1}$ denote the sequence of associated generators.
We want to show that the Feller processes satisfy the conditions of theorem
\ref{thm:centralresult}.
We define
\begin{equation}
 \underline{\alpha} := \infs{n \geq 1}\infs{x \in \Reals} \alpha^{n}(x)
~\text{and} ~\overline{\alpha} := \sups{n \geq 1}\sups{x \in \Reals}
\alpha^{n}(x).
\end{equation}
The symbols $q^{n}$ are all real-valued. For $|\xi| \geq 1, n\in
\mathbb{N}, x
\in
\mathbb{R},$ we have
\begin{equation}
 |\xi|^{\underline{\alpha}}\leq |\xi
|^{\alpha^{n}(x)} \leq  |\xi|^{\overline{\alpha}},
\end{equation}
and for $|\xi| \leq 1, n\in \mathbb{N}, x \in
\mathbb{R}$ the inequality
\begin{equation}
 |\xi|^{\overline{\alpha}}\leq |\xi
|^{\alpha^{n}(x)} \leq    |\xi|^{\underline{\alpha}}
\end{equation}
holds. So, by assumption \eqref{eq:approxsequence2} we see that the four conditions
\eqref{eq:tightnesscondition0},\eqref{eq:tightnessconditions1},
\eqref{eq:tightnessconditions2} and \eqref{eq:SymbolLowerBound} are satisfied.
Also $\mathcal{R}(A^{n}) \subset \overline{C}(\Reals), n\geq1$
(see section \ref{sec:stablelike}) and $C_{c}^{\infty}(\Reals) \subset
\Def(A^n),n\geq1$.\\
Now we want to show that for the sequence $\sequence{U_{m}}{m\geq1}$ of open sets we
defined in the preceding paragraphs the condition \eqref{eq:restricteduniformcont} holds.
Let $x \in [-m,m] \cap (U_{m})^{c}$ and $f \in \Def(A).$ Then for all
$x \in \Reals$ we have
\begin{equation}
\begin{split}
 &\, |(A^{n}-A )f(x)| \\
= &\, |\frac{1}{(2\pi)^{1/2}}\ints{\Reals}(e^{i x\xi}
(q^{n}(x,\xi)-q(x,\xi))\hat{f}(\xi)d\xi |\\
\leq &\, \frac{1}{(2\pi)^{1/2}}\ints{\Reals}
|q^{n}(x,\xi)-q(x,\xi)|{|\hat{f}(\xi)|}d\xi \\
= &\, \frac{1}{(2\pi)^{1/2}}\ints{\Reals}
\left||\xi|^{\alpha^{n}(x)}-|\xi|^{\alpha(x)}\right|{|\hat{f}(\xi)|}d\xi \\
= &\, \frac{1}{(2\pi)^{1/2}}\ints{\Reals}
\left|\sint{\alpha(x)}{\alpha^{n}(x)}|\xi|^{z}\log|\xi|
dz \right|{|\hat{f}(\xi)|} d\xi \\
\leq &\, \frac{1}{(2\pi)^{1/2}}\ints{\Reals}
|\alpha^{n}(x)-\alpha(x)|\left|(|\xi|^{\underline{\alpha}}+|\xi|^{\overline{
\alpha }
}) \log|\xi| \right|{|\hat{f}(\xi)|} d\xi \\
\leq &\, \frac{1}{(2\pi)^{1/2}}|\alpha^{n}(x)-\alpha(x)| \ints{\Reals}
\left|(|\xi|^{\underline{\alpha}}+|\xi|^{\bar{\alpha}
}) \log|\xi| \right|{|\hat{f}(\xi)|} d\xi \\
\leq &\,  |\alpha^{n}(x)-\alpha(x)| \ints{\Reals} C (1 + |\xi|^{2})
|\hat{f}(\xi)|
d\xi.
\end{split}
\end{equation}
The constant $C>0$ in the last line can be chosen independently of $x,n,m$.
For $m \geq 1$ and $n \gtinf$ the term $|\alpha^{n}(x)-\alpha(x)|$ converges uniformly
to zero on $[-m,m] \cap (U_{m})^{c}$. This implies that condition
\eqref{eq:restricteduniformcont} is also satisfied, and finishes the proof.
\end{proof}

\begin{remark}
The domain of definition of the operator can be extended by working with the bp-closure of
the linear span of $A$ (see proposition 3.4, chapter 4, \cite{EthierKurtz} and Appendix
3, \cite{EthierKurtz} for an introduction of the concept of bp-convergence).
\end{remark}

\begin{appendix}
\section{Topological concepts} \label{sec:topologicalconcepts}
Let $(X,d)$ be a metric space. We define the distance of a set $A \subset X$ to a point $x
\in X$ in the usual way $d(x,A) := \infs{y \in A} d(x,y)$.

Let $A \subset X$. We denote by $A^{(1)}:=\{ x \in X ~|~ d(x,A\backslash\{x\})=0\}$ the
set of all cluster points of $A$. We write $A^{(0)}:=\overline{A}$ for the
closure of $A$. We recursively define $A^{(k+1)}:=(A^{(k)})^{(1)}$
for all $k \geq 0$. And we call these sets the \emph{derived sets} of $A$.
For $k\in \mathbb{N} \cup 0$ one can see that $A^{(k)}\backslash
A^{(k+1)}$ has only isolated points and is closed.
The derived sets are hierarchically ordered:
\begin{equation}
 A^{(k+1)} \subset A^{(k)},\quad k\geq 0.
\end{equation}
A closed $A \subset X$ which satisfies $A \subset
A^{(1)}$ is said to be \emph{perfect}. One can show that $A^{(\infty)}:=\bigcaps{k \geq 0}
A^{(k)}$ is perfect. We will use the following theorem.
\begin{thm}\label{thm:CantorResultat}
A non-empty perfect subset of $\mathbb{R}^{d}$ is uncountable.
\end{thm}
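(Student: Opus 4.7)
The plan is to argue by contradiction: assuming the non-empty perfect set $A$ is countable, I will construct a decreasing sequence of non-empty closed balls whose intersections with $A$ remain non-empty by compactness of closed bounded sets in $\mathbb{R}^d$, and yet which avoid every element of an enumeration of $A$. Since the resulting intersection lies inside $A$, this will force a contradiction.

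First I would note that a non-empty perfect set in $\mathbb{R}^d$ is automatically infinite: if $A$ were finite and non-empty, any $x \in A$ would satisfy $d(x, A \setminus \{x\}) > 0$, contradicting $A \subset A^{(1)}$. So I may enumerate $A = \{a_n : n \geq 1\}$ and work towards a contradiction.

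The core of the argument is an inductive construction of points $x_n \in A$ and radii $r_n > 0$ with the property that, setting $F_n := \overline{B_{r_n}^d(x_n)} \cap A$, one has $F_{n+1} \subset F_n$, $a_n \notin \overline{B_{r_n}^d(x_n)}$, and $r_n \leq 1/n$. To start, pick $x_1 \in A \setminus \{a_1\}$ (possible since $A$ is infinite) and choose $r_1$ small enough that $\overline{B_{r_1}^d(x_1)}$ excludes $a_1$. Given the $n$-th data, perfectness yields $x_n \in A^{(1)}$, so $B_{r_n}^d(x_n)$ contains infinitely many points of $A$; pick any $y \in A \cap B_{r_n}^d(x_n)$ with $y \neq a_{n+1}$, and choose $r_{n+1} > 0$ small enough that $\overline{B_{r_{n+1}}^d(y)} \subset B_{r_n}^d(x_n)$, $a_{n+1} \notin \overline{B_{r_{n+1}}^d(y)}$, and $r_{n+1} \leq 1/(n+1)$; then set $x_{n+1} := y$.

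Each $F_n$ is non-empty (it contains $x_n$) and compact (being closed and bounded in $\mathbb{R}^d$), and $(F_n)_{n \geq 1}$ is decreasing by construction, so the finite intersection property supplies some $z \in \bigcap_{n \geq 1} F_n \subset A$, say $z = a_k$. But $F_k \subset \overline{B_{r_k}^d(x_k)}$ while $a_k$ has been explicitly excluded from $\overline{B_{r_k}^d(x_k)}$, contradicting $z = a_k$. The only delicate point will be the inductive bookkeeping—arranging simultaneously that the new closed ball sits strictly inside the previous open ball, avoids the next enumerated point $a_{n+1}$, and still meets $A$—but this is routine because perfectness supplies infinitely many points of $A$ in every neighborhood of $x_n$, so at worst one of them equals $a_{n+1}$ and any other choice works.
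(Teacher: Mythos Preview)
Your argument is correct and is precisely the classical Cantor nested-ball construction that the paper defers to via the citation of Theorem 2.43 in Rudin; no original proof is given in the paper itself. There is nothing to add or compare.
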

\begin{proof}
See Theorem 2.43, \cite{RudinAnalysis}.
\end{proof}

\section{Convolutions and mollifiers}\label{sec:convolution}
Basic results on convolutions can be found for example in chapter VI.3, \cite{yosida}.
\begin{definition}
 We define the convolution f * g of a locally integrable function $f:\Rd \gt\Reals$
and a measurable function $g:\Rd \gt \Reals$  with compact support as follows:
\begin{equation}
 (f * g)(x):=\ints{\Rd}f(y)g(x-y)dy,x \in \Rd.
\end{equation}
\end{definition}
We define the support of a function $f:\Rd \gt \Reals$ by
\begin{equation}
 \operatorname{supp} f :=  \overline{\{ x \in \Rd ~|~ f(x) \neq 0 \}},
\end{equation}
e.g. the closure of the set of points, where $f$ is not zero.
\begin{definition}
By a \emph{mollifier} we understand a function $ \phi \in C^{\infty}(\Rd)$
which satisfies
\begin{itemize}
\item
\begin{equation}
 \operatorname{supp} \phi \subset \overline{B_{1}(0)},
\end{equation}
\item
\begin{equation}
 \int \phi d\lambda=1, \phi \geq 0.
\end{equation}
\end{itemize}
\end{definition}
One can derive further mollifiers from existing ones by defining
\begin{equation}
 \phi_{k}(x):= k\phi(kx), x \in \Rd,
\end{equation}
for all $k \geq 1$.
This is a sequence of mollifiers, whose support converges to $\{0\}.$ One can use such a sequence to approximate functions with the help of the
convolution. Here we consider only the one-dimensional case.
\begin{thm}\label{thm:faltungapprox}
For $K>0$, a continuous function $f:\Reals \gt \Reals$ can be uniformly approximated on
any interval $[-K,K]$ by the sequence $\sequence{(f * \phi_{k})}{k\geq1}$:
\begin{equation}
 \lims{k \gtinf} \sups{x \in [-K,K]}|(f * \phi_{k})(x)-f(x)|=0.
\end{equation}
\end{thm}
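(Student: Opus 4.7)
The plan is to use the standard mollifier approximation argument, relying on two facts: that $\phi_k$ has support shrinking to $\{0\}$, and that $\int \phi_k\, d\lambda = 1$, together with uniform continuity of $f$ on a suitable compact enlargement of $[-K,K]$.

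First I would rewrite the difference $(f*\phi_k)(x)-f(x)$ in a convenient form. By the substitution $z=x-y$ in the definition of convolution,
\begin{equation}
 (f*\phi_k)(x)=\ints{\Reals}f(x-y)\phi_k(y)dy,
\end{equation}
and since $\int\phi_k\,d\lambda=1$ one can write $f(x)=\int f(x)\phi_k(y)dy$, so that
\begin{equation}
 (f*\phi_k)(x)-f(x)=\ints{\Reals}(f(x-y)-f(x))\phi_k(y)dy.
\end{equation}
Because $\operatorname{supp}\phi_k\subset\overline{B_{1/k}(0)}=[-1/k,1/k]$, the effective domain of integration is only $[-1/k,1/k]$, which is the crucial localization.

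Next I would exploit the uniform continuity of $f$ on an enlargement of $[-K,K]$. For every $x\in[-K,K]$ and every $y\in[-1/k,1/k]$ with $k\geq 1$ we have $x-y\in[-K-1,K+1]$. Since $f$ is continuous on the compact set $[-K-1,K+1]$, it is uniformly continuous there. Fix $\epsilon>0$; then there exists $\delta>0$ such that $|f(x_1)-f(x_2)|<\epsilon$ whenever $x_1,x_2\in[-K-1,K+1]$ with $|x_1-x_2|<\delta$. Choosing $k_0\in\mathbb{N}$ so that $1/k_0<\delta$ ensures that for all $k\geq k_0$, all $x\in[-K,K]$, and all $y\in[-1/k,1/k]$, the estimate $|f(x-y)-f(x)|<\epsilon$ holds.

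Combining the two steps, for $k\geq k_0$ and $x\in[-K,K]$,
\begin{equation}
 |(f*\phi_k)(x)-f(x)|\leq \ints{[-1/k,1/k]}|f(x-y)-f(x)|\phi_k(y)dy \leq \epsilon\ints{\Reals}\phi_k(y)dy=\epsilon,
\end{equation}
using positivity of $\phi_k$ and the normalization $\int\phi_k\,d\lambda=1$. This bound is uniform in $x\in[-K,K]$, so $\sups{x\in[-K,K]}|(f*\phi_k)(x)-f(x)|\leq\epsilon$ for all $k\geq k_0$, and letting $\epsilon\to 0$ yields the claim. There is no real obstacle in this argument; the only point requiring slight care is to note that $f$ is merely continuous rather than bounded on $\Reals$, so one must localize the uniform continuity argument to the compact enlargement $[-K-1,K+1]$, which is automatic from the shrinking-support property of the mollifiers.
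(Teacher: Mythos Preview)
Your argument is correct and is the standard mollifier approximation proof: localize via the shrinking support of $\phi_k$, use uniform continuity of $f$ on the compact enlargement $[-K-1,K+1]$, and conclude with the normalization $\int\phi_k=1$ and positivity. The paper itself does not give a proof of this theorem; it is stated in the appendix as a basic result on convolutions with a reference to Yosida, so there is nothing to compare against beyond noting that your proof is the expected one.
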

As another property of the convolution of a function $f:\Reals \gt \Reals$ with a mollifier
$\phi_{k},k \geq1,$ we note that that for $x \in \Reals$
\begin{equation}\label{eq:localconvolution}
 (f*\phi_{k})(x)= C, ~\text{if} ~ f|_{(x-1/k,x+1/k)}= C
~\text{for a}~ C
\in \Reals.
\end{equation}

\section{Poisson random measures}\label{sec:poissonpoint}
In this Appendix, we follow mainly section 2.3 in \cite{Applebaum}.
\begin{definition}
Let $(\Omega,\Filt,\mathbb{P})$ be a probability space.
A random measure on the measurable space $(S,\mathcal{S})$ is a function
\begin{equation}
M:\mathcal{S} \times \Omega \gt \Reals
\end{equation}
such that $M(\cdot,\omega)$ is a measure for every $\omega \in \Omega$ and
$M(B,\cdot)$ is a random variable for every $B \in \mathcal{S}$.
\end{definition}
In the following we write $M(B)$ for $M(B,\cdot)$.
Now we look at a special class of random measures -- Poisson random measures.
\begin{definition}
A Poisson random measure $N$ with intensity measure $\nu$ is a random measure on a
measurable space $(S,\mathcal{S})$ which satisfies the following conditions for a
non-trivial ring $\mathcal{A} \subset \mathcal{S}$:
\begin{itemize}
 \item if $A_{1},A_{2} \in  \mathcal{A}$ are disjoint, then $N(A_{1})$
and $N(A_{2})$ are independent,
\item for $A \in  \mathcal{A}$ the random variable $N(A)$ has a Poisson distribution with parameter $\nu(A)$.
\end{itemize}
\end{definition}
The following theorem will give an answer to the question, under which conditions an intensity measure
$\nu$ defines a Poisson random measure in a unique way.
\begin{thm}
Let $\nu$ be a $\sigma$-finite measure on a measurable space $(S,\mathcal{S})$. Then
there exists a unique in law Poisson random measure $M$ on a probability space $(\Omega,
\Filt,
\mathbb{P})$, such that
$\nu(A)=\E{M(A)}$ for all $A \in \mathcal{S}$. In this case
$\mathcal{A}=\{ A \in \mathcal{S} ~:~ \nu(A) < \infty\}$.
\end{thm}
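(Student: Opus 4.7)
The plan is to establish existence by an explicit construction and uniqueness by identifying finite-dimensional distributions, using $\sigma$-finiteness to reduce to the finite-measure case.

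For existence, I would first handle the case $\nu(S)<\infty$. Choose a Poisson random variable $N$ with parameter $\nu(S)$ and an independent sequence $(X_i)_{i\geq 1}$ of i.i.d.\ random variables with law $\nu/\nu(S)$ on $(S,\mathcal{S})$, all defined on a common probability space $(\Omega,\Filt,\mathbb{P})$. Set
\begin{equation}
M(A,\omega):=\ssum{i=1}{N(\omega)}\Indicator_{A}(X_i(\omega)),\quad A\in\mathcal{S}.
\end{equation}
A direct computation of the joint characteristic function shows that for disjoint $A_1,\ldots,A_k\in\mathcal{S}$ the random variables $M(A_1),\ldots,M(A_k)$ are independent, and each $M(A_j)$ is Poisson with parameter $\nu(A_j)$; the thinning property of the Poisson distribution is what makes this work. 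Then for general $\sigma$-finite $\nu$, decompose $S=\bigsqcup_{n\geq 1}S_n$ with $\nu(S_n)<\infty$, construct independent Poisson random measures $M_n$ on $(S_n,\mathcal{S}\cap S_n)$ as above on a product probability space, and set $M(A):=\sums{n\geq 1}M_n(A\cap S_n)$. For $A\in\mathcal{A}$ the sum of independent Poisson variables is Poisson with parameter $\sums{n}\nu(A\cap S_n)=\nu(A)$, and disjointness of $A_1,\ldots,A_k\in\mathcal{A}$ combined with the independence across different $n$ yields the required independence. This also shows $\E{M(A)}=\nu(A)$ for every $A\in\mathcal{S}$ by monotone convergence, and reveals that $\mathcal{A}=\{A\in\mathcal{S}:\nu(A)<\infty\}$ is the natural ring on which the Poisson property holds (for $A$ with $\nu(A)=\infty$ one gets $M(A)=\infty$ a.s.).

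For uniqueness in law, let $M$ and $M'$ be two Poisson random measures with intensity $\nu$. For any finite collection of disjoint sets $A_1,\ldots,A_k\in\mathcal{A}$, the independence assumption together with the Poisson marginals determines the joint law of $(M(A_1),\ldots,M(A_k))$ entirely through $\nu(A_1),\ldots,\nu(A_k)$, and the same holds for $M'$. For an arbitrary finite family $B_1,\ldots,B_m\in\mathcal{A}$ one passes to the atoms of the generated algebra, which are disjoint sets in $\mathcal{A}$, and writes each $M(B_j)$ as a finite sum of the atomic counts. Hence the finite-dimensional distributions of $(M(B))_{B\in\mathcal{A}}$ and $(M'(B))_{B\in\mathcal{A}}$ coincide, and extending by $\sigma$-additivity (writing every $A\in\mathcal{S}$ as an increasing limit of sets in $\mathcal{A}$ using $\sigma$-finiteness) identifies the laws as random measures.

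The main obstacle is the uniqueness part, specifically the passage from the ring $\mathcal{A}$ to all of $\mathcal{S}$ and the verification that $\mathcal{A}$ is indeed the correct ring (rather than some smaller ring on which the Poisson property is assumed a priori); one has to check that the defining independence plus Poisson-marginal property on $\mathcal{A}$ forces the full joint law on $\mathcal{S}$. The existence part is essentially bookkeeping once the finite-measure construction is in place, although care is needed to verify independence across different components $S_n$ simultaneously with independence within each component, which is secured by working on the product probability space and invoking the standard fact that independent Poisson sums remain Poisson.
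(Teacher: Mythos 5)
Your proposal is correct, but note that the paper itself offers no proof of this theorem: it simply cites Theorem 2.3.6 of Applebaum and Theorem 4.1 of It\^o. What you have written out is precisely the standard construction underlying those references -- the mixed-binomial (Poissonization) construction $M(A)=\sum_{i=1}^{N}\Indicator_{A}(X_i)$ in the finite-intensity case, superposition of independent copies along a partition $S=\bigsqcup_n S_n$ with $\nu(S_n)<\infty$ in the $\sigma$-finite case, and uniqueness via finite-dimensional distributions on the ring $\mathcal{A}$, refined through the atoms of the generated algebra and extended to $\mathcal{S}$ by monotone approximation. So rather than taking a different route, you have filled in the proof the paper delegates to the literature, and done so soundly: the thinning computation, the a.s.\ divergence $M(A)=\infty$ when $\nu(A)=\infty$ (via Borel--Cantelli applied to $\{M_n(A\cap S_n)\geq 1\}$), and the atom decomposition for non-disjoint families are all the right ingredients. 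One caveat worth flagging: your uniqueness argument uses independence of \emph{arbitrary finite} disjoint families $M(A_1),\dots,M(A_k)$, whereas the paper's stated definition of a Poisson random measure literally requires only pairwise independence of $N(A_1)$, $N(A_2)$ for disjoint $A_1,A_2\in\mathcal{A}$. Under the literal pairwise definition the joint law on $k\geq 3$ disjoint sets is not a priori pinned down, so your uniqueness step silently upgrades the hypothesis; this is harmless in context, since the cited definition in Applebaum (Definition 2.3.5 there) does demand mutual independence for finite disjoint families and that is clearly what the paper intends, but if you were working strictly from the paper's wording you should state explicitly that you interpret the definition with mutual independence.
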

\begin{proof}
See theorem 2.3.6, \cite{Applebaum}, and theorem 4.1 \cite{ItoPPP}.
\end{proof}
Two random measures $N_1,N_2$ on measurable
spaces $(S_1,\mathcal{S}_1),(S_2,\mathcal{S}_2)$ are said to be independent, if for all $A
\in \mathcal{S}_1,B \in \mathcal{S}_2$ the random variables $N_1(A)$
and $N_2(B)$ are independent.\\
The following assertions can be directly derived from the definition of Poisson random
measures.
If $N$ is a Poisson random measure on $S$ with intensity measure $\nu$ and
$B \in \mathcal{S}$, then $N^{B}(A):=N(B \cap A), A \in
\mathcal{S}$ defines another Poisson random measure with intensity
measure $\nu_{B}(A):=\nu(B \cap A), A \in \mathcal{S}$. Further let $B_1,B_2 \in
\mathcal{S}$ with $B_1 \cap B_2 = \emptyset$. Then the random measures $N^{B_1}$ and
$N^{B_2}$ are independent. Additionally, consider the case in which $S$ can be written as
the product of two measurable spaces $(S_1,\mathcal{S}_2)$ and
$(S_2,\mathcal{S}_2)$ such that $(S,\mathcal{S})=(S_1 \times
S_2,\mathcal{S}_1 \otimes \mathcal{S}_2)$ and $\nu= \nu_1 \otimes \nu_2$ with
 $\nu_2(S_2) < \infty$ and $\nu_1$ $\sigma$-finite. Then $N_1(A):=N(A \times
S_2), A \in \mathcal{S}_1,$ is a Poisson random measure on $S_1$ with intensity
measure $\nu^{2}(S_2)\nu^{1}$.\\
Now we define a compensated Poisson random measure.
\begin{definition}
 Let $\xi$ be a Poisson random measure and $\nu$ its intensity measure.
We call
\begin{equation}
 \tilde{\xi}:=\xi-\nu
\end{equation}
the associated compensated Poisson random measure.
\end{definition}
In the following $|\cdot|$ denotes the cardinality of a set and $\lambda$ once again
the Lebesgue measure.
A Poisson point process is defined as follows.
\begin{definition}
Let $(S,\mathcal{S})$ be a measurable space, $U:=\Reals^{+} \times
S$ and $\mathcal{U}:=\mathcal{B}([0,\infty)) \otimes
\mathcal{S}$. Let $(p_{t})_{t\geq0}$ be an adapted process on a filtered probability
space taking values in $S$ such that $\xi$ defined by $\xi([0,t] \times A):=|\{0\leq s <
t ~|~ \xi_{s} \in A\}|, t\geq0, A \in \mathcal{S},$ is a Poisson random measure
on $[0,\infty) \times S$ with intensity measure $\lambda \otimes \nu$. Then we say
that $(p_{t})_{t\geq0}$ is a Poisson point process with intensity measure $\nu$ and $\xi$
is its associated Poisson random measure.
\end{definition}
For the definition of stochastic integrals with respect to (compensated) Poisson random
measures we refer to \cite{Applebaum}.
\end{appendix}

\addcontentsline{toc}{section}{References}
\bibliographystyle{spmpsci}
\bibliography{PaperBib}


\end{document}